\documentclass{article}

\usepackage[table,dvipsnames]{xcolor}

\usepackage{lmodern}

\usepackage[margin=2.5cm]{geometry}
\usepackage[utf8]{inputenc}
\usepackage[T1]{fontenc}
\usepackage{scrbase}
\usepackage{microtype}
\usepackage[shortlabels]{enumitem}
\usepackage{etoolbox}
\usepackage{graphicx}
\usepackage{tabularx}
\usepackage{arydshln}
\usepackage[framemethod=TikZ]{mdframed}
\usepackage{multirow}
\usepackage{makecell}
\usepackage{pdflscape}
\usepackage{authblk}

\usepackage[
backend=biber,
style=alphabetic,
maxnames=99,
backref=true
]{biblatex}
\usepackage{amsmath}
\usepackage{amssymb}
\usepackage{amsthm}
\usepackage{thmtools}
\usepackage{mathtools}
\usepackage{braket}

\usepackage{todonotes}

\usepackage{array}
\newcolumntype{Y}{>{\centering\arraybackslash}X}

\definecolor{betterblue}{RGB}{0, 110, 230}
\definecolor{bettergreen}{RGB}{164, 180, 43}

\usepackage[most]{tcolorbox}
\newtcolorbox{mybox}{
    colback=orange!10, colframe=orange!40, arc=4mm, boxrule=1pt,
    left=6pt, right=6pt, top=6pt, bottom=6pt,
    width=0.95\textwidth, center
}

\usepackage{hyperref}
\hypersetup{%
	pdftoolbar=false,
	pdfmenubar=false,
	colorlinks=true,
	urlcolor={OliveGreen},
	linkcolor={betterblue},
	citecolor={orange}
}
\usepackage[capitalise,noabbrev]{cleveref}

\theoremstyle{plain}
\newtheorem{theorem}{Theorem}[section]
\newtheorem{lemma}[theorem]{Lemma}
\newtheorem{proposition}[theorem]{Proposition}
\newtheorem{corollary}[theorem]{Corollary}

\newtheorem{observation}[theorem]{Observation}
\newtheorem{question}[theorem]{Question}
\newtheorem*{question*}{Question}

\newtheorem*{conjecture*}{Conjecture}

\AtEndEnvironment{proposition}{\setcounter{claim}{0}}
\AtEndEnvironment{lemma}{\setcounter{claim}{0}}
\AtEndEnvironment{theorem}{\setcounter{claim}{0}}

\theoremstyle{definition}
\newtheorem{definition}[theorem]{Definition}
\newtheorem*{notation}{Notation}
\newtheorem{remark}[theorem]{Remark}
\newtheorem{examplex}[theorem]{Example}
\newenvironment{example}
  {\pushQED{\qed}\examplex}
  {\popQED\endexamplex}

\newenvironment{remarkbox}{%
  \mdfsetup{%
    frametitle={%
      \tikz[baseline=(current bounding box.east),outer sep=0pt]
      \node[anchor=east,rectangle,fill=white!20]
    {\strut Remark};}
  }%
  \mdfsetup{%
    innertopmargin=0pt,
    linewidth=.5pt,topline=true,%
    frametitleaboveskip=\dimexpr-\ht\strutbox\relax%
  }
  \begin{mdframed}[]
  }{%
  \end{mdframed}%
}

\renewcommand{\epsilon}{\varepsilon}

\newcommand{\C}{\ensuremath{\mathbb{C}}}

\newcommand{\N}{\ensuremath{\mathbb{N}}}

\DeclareMathOperator{\End}{End}

\DeclareMathOperator{\id}{id}
\DeclareMathOperator{\Tr}{Tr}

\DeclareMathOperator{\lspan}{span}

\DeclareMathOperator{\img}{img}

\DeclareMathOperator{\rk}{rk}
\newcommand\restrict[1]{\vert_{#1}}

\newcommand{\transpose}{^\mathsf{T}}
\newcommand{\adjoint}{^\dagger}
\newcommand{\inverse}{^{-1}}
\newcommand{\identity}{I}
\newcommand{\compl}[1]{#1^c}
\newcommand{\op}{^{\operatorname{op}}}

\newcommand{\IFF}{\Longleftrightarrow}
\renewcommand{\iff}{\Leftrightarrow}

\renewcommand{\implies}{\Rightarrow}
\newcommand{\abs}[1]{\lvert #1 \rvert}
\newcommand{\norm}[1]{\lVert #1 \rVert}
\newcommand{\Cstar}{C\ensuremath{^*}}

\newcommand{\psubseteq}{\sqsubseteq}
\newcommand{\FdHilb}{\mathbf{FdHilb}}
\newcommand{\Hilb}{\mathbf{Hilb}}
\newcommand{\TFdHilb}{\mathbf{2FdHilb}}

\newcommand{\looprm}[1]{#1^{\mathrlap{\mspace{2.3mu}\circ}\times}}
\newcommand{\loops}[1]{#1^{\circ}}

\newcommand{\pproj}[1]{\widetilde{#1}}
\newcommand{\mmap}[1]{\widehat{#1}}
\newcommand{\ideal}[1]{\mathring{#1}}

\newcommand{\ccdot}{\makebox[1ex]{\textbf{$\cdot$}}}

\newcommand{\lrangle}[1]{\langle #1 \rangle}

\newcommand{\ucl}[1]{\operatorname{cl} #1}
\newcommand{\uclarg}[1]{(\operatorname{cl} #1)}

\makeatletter
\let\amsmath@bigm\bigm

\renewcommand{\bigm}[1]{%
  \ifcsname fenced@\string#1\endcsname
    \expandafter\@firstoftwo
  \else
    \expandafter\@secondoftwo
  \fi
  {\expandafter\amsmath@bigm\csname fenced@\string#1\endcsname}%
  {\amsmath@bigm#1}%
}

\newcommand{\DeclareFence}[2]{\@namedef{fenced@\string#1}{#2}}
\makeatother

\DeclareFence{\mid}{|} 

\makeatletter
\DeclareRobustCommand\ket[1]{%
  \@ifnextchar\bra{\k@t{#1}\!}{\k@t{#1}}%
}
\newcommand\k@t[1]{{|{#1}\rangle}}
\makeatother

\makeatletter
\newcommand{\vast}{\bBigg@{3}}
\newcommand{\Vast}{\bBigg@{4}}
\newcommand{\VVast}{\bBigg@{5}}
\newcommand{\VAST}{\bBigg@{6}}
\makeatother

\newcommand{\stoptocwriting}{%
  \addtocontents{toc}{\protect\setcounter{tocdepth}{-5}}}

\makeatletter
\patchcmd{\@maketitle}{\LARGE \@title}{\fontsize{19}{21.2}\selectfont\@title}{}{}
\makeatother

\usetikzlibrary{patterns, patterns.meta}

\input{./style/my_tikzit.sty}

\tikzstyle{myBlue}=[draw={rgb,255: red,25; green,132; blue,232}]
\tikzstyle{myBlueFill}=[fill={rgb,255: red,25; green,132; blue,232}]
\tikzstyle{myGreen}=[draw={rgb,255: red,164; green,180; blue,43}]
\tikzstyle{myPink}=[draw={rgb,255: red,208; green,12; blue,133}]
\tikzstyle{myPinkFill}=[fill={rgb,255: red,208; green,12; blue,133}]
\tikzstyle{myYellow}=[draw={rgb,255: red,255; green,160; blue,0}]
\tikzstyle{myYellowFill}=[fill={rgb,255: red,255; green,160; blue,0}]
\tikzstyle{myBrown}=[draw={rgb,255: red,173; green,100; blue,82}]
\tikzstyle{myBrownFill}=[fill={rgb,255: red,173; green,100; blue,82}]
\tikzstyle{myOrange}=[draw={rgb,255: red,245; green,124; blue,0}]
\tikzstyle{myOrangeFill}=[fill={rgb,255: red,245; green,124; blue,0}]
\tikzstyle{myPurple}=[draw={rgb,255: red,178; green,121; blue,219}]
\tikzstyle{myPurpleFill}=[fill={rgb,255: red,178; green,121; blue,219}]
\tikzstyle{myPlum}=[draw={rgb,255: red,123; green,31; blue,162}]
\tikzstyle{myPlumFill}=[fill={rgb,255: red,123; green,31; blue,162}]
\tikzstyle{myTeal}=[draw={rgb,255: red,0; green,151; blue,167}]
\tikzstyle{map}=[myBlue, fill=white, shape=circle, thick, tikzit category=maps, minimum size=20pt, inner sep=1pt, tikzit draw={rgb,255: red,25; green,132; blue,232}]
\tikzstyle{correlation}=[map, myPink, tikzit draw={rgb,255: red,208; green,12; blue,133}, tikzit category=games]
\tikzstyle{game}=[map, myGreen, tikzit draw={rgb,255: red,104; green,159; blue,56}, tikzit category=games]
\tikzstyle{q-func}=[map, myPink, tikzit draw={rgb,255: red,208; green,12; blue,133}, tikzit category=games]
\tikzstyle{frob}=[fill=white, tikzit draw=black, shape=circle, very thick, tikzit category=frobenius, minimum size=4pt, inner sep=0pt]
\tikzstyle{x-frob}=[frob, myYellow, tikzit draw={rgb,255: red,255; green,160; blue,0}, tikzit category=frobenius]
\tikzstyle{dark-x-frob}=[frob, myYellow, myYellowFill, tikzit fill={rgb,255: red,255; green,160; blue,0}, tikzit draw={rgb,255: red,255; green,160; blue,0}, tikzit category=frobenius]
\tikzstyle{y-frob}=[frob, myBrown, tikzit draw={rgb,255: red,173; green,100; blue,82}, tikzit category=frobenius]
\tikzstyle{xy-frob}=[frob, myOrange, ultra thick, tikzit draw={rgb,255: red,245; green,124; blue,0}, tikzit category=frobenius]
\tikzstyle{half-xy-proj}=[xy-frob, shape=rectangle, minimum size=6pt, tikzit draw={rgb,255: red,245; green,124; blue,0}, tikzit category=frobenius, tikzit fill=white, tikzit shape=rectangle]
\tikzstyle{half-xy-proj-flipped}=[half-xy-proj, myOrangeFill, tikzit draw={rgb,255: red,245; green,124; blue,0}, tikzit category=frobenius, tikzit fill={rgb,255: red,245; green,124; blue,0}, tikzit shape=rectangle]
\tikzstyle{twisted-xy-proj}=[myOrange, left color=white, right color={rgb,255: red,245; green,124; blue,0}, tikzit draw={rgb,255: red,245; green,124; blue,0}, shape=rectangle, thin, tikzit category=frobenius, minimum size=8pt, inner sep=0pt, tikzit fill=white, tikzit shape=rectangle]
\tikzstyle{twisted-xy-proj-flipped}=[myOrange, right color=white, left color={rgb,255: red,245; green,124; blue,0}, tikzit draw={rgb,255: red,245; green,124; blue,0}, shape=rectangle, thin, tikzit category=frobenius, minimum size=8pt, inner sep=0pt, tikzit fill={rgb,255: red,245; green,124; blue,0}, tikzit shape=rectangle]
\tikzstyle{a-frob}=[frob, myPurple, tikzit draw={rgb,255: red,178; green,121; blue,219}, tikzit category=frobenius]
\tikzstyle{dark-a-frob}=[frob, myPurple, myPurpleFill, tikzit fill={rgb,255: red,178; green,121; blue,219}, tikzit draw={rgb,255: red,178; green,121; blue,219}, tikzit category=frobenius]
\tikzstyle{b-frob}=[frob, myBlue, tikzit draw={rgb,255: red,25; green,118; blue,210}, tikzit category=frobenius]
\tikzstyle{ab-frob}=[frob, myPlum, ultra thick, tikzit draw={rgb,255: red,123; green,31; blue,162}, tikzit category=frobenius]
\tikzstyle{half-ab-proj}=[ab-frob, shape=rectangle, minimum size=6pt, tikzit draw={rgb,255: red,123; green,31; blue,162}, tikzit category=frobenius, tikzit shape=rectangle]
\tikzstyle{half-ab-proj-flipped}=[half-ab-proj, myPlumFill, tikzit category=frobenius, tikzit draw={rgb,255: red,123; green,31; blue,162}, tikzit fill={rgb,255: red,123; green,31; blue,162}, tikzit shape=rectangle]
\tikzstyle{twisted-ab-proj}=[myPlum, left color=white, right color={rgb,255: red,123; green,31; blue,162}, tikzit draw={rgb,255: red,123; green,31; blue,162}, shape=rectangle, thin, tikzit category=frobenius, minimum size=8pt, inner sep=0pt, tikzit fill=white, tikzit shape=rectangle]
\tikzstyle{twisted-ab-proj-flipped}=[myPlum, right color=white, left color={rgb,255: red,123; green,31; blue,162}, shape=rectangle, thin, tikzit category=frobenius, minimum size=8pt, inner sep=0pt, tikzit draw={rgb,255: red,123; green,31; blue,162}, tikzit fill={rgb,255: red,123; green,31; blue,162}, tikzit shape=rectangle]
\tikzstyle{trap}=[fill=white, myPink, shape=trapezium, thick, tikzit shape=rectangle, trapezium left angle=90, trapezium right angle=70, minimum height=16pt, minimum width=16pt, inner sep=2pt, draw={rgb,255: red,25; green,132; blue,232}, tikzit category=maps]
\tikzstyle{conj}=[trap, trapezium left angle=70, trapezium right angle=90, tikzit draw={rgb,255: red,25; green,132; blue,232}, tikzit shape=rectangle, tikzit category=maps]
\tikzstyle{dagger}=[conj, shape border rotate=180, tikzit draw={rgb,255: red,25; green,132; blue,232}, tikzit shape=rectangle, tikzit category=maps]
\tikzstyle{transpose}=[trap, shape border rotate=180, tikzit draw={rgb,255: red,25; green,132; blue,232}, tikzit shape=rectangle, tikzit category=maps]
\tikzstyle{bra}=[myTeal, fill=white, tikzit draw={rgb,255: red,0; green,151; blue,167}, regular polygon, regular polygon sides=3, tikzit category=maps, inner sep=0pt, minimum size=20pt]
\tikzstyle{bra_s}=[myTeal, fill=white, tikzit draw={rgb,255: red,0; green,151; blue,167}, regular polygon, regular polygon sides=3, tikzit category=maps, inner sep=-1.25pt, minimum size=20pt]
\tikzstyle{bra_xs}=[myTeal, fill=white, tikzit draw={rgb,255: red,0; green,151; blue,167}, regular polygon, regular polygon sides=3, tikzit category=maps, inner sep=-1.75pt, minimum size=20pt]
\tikzstyle{bra_xxs}=[myTeal, fill=white, tikzit draw={rgb,255: red,0; green,151; blue,167}, regular polygon, regular polygon sides=3, tikzit category=maps, inner sep=-3pt, minimum size=1pt]
\tikzstyle{ket}=[bra, shape border rotate=180, tikzit draw={rgb,255: red,0; green,151; blue,167}, tikzit category=maps]
\tikzstyle{ket_s}=[{bra_s}, shape border rotate=180, tikzit draw={rgb,255: red,0; green,151; blue,167}, tikzit category=maps]
\tikzstyle{ket_xs}=[{bra_xs}, shape border rotate=180, tikzit draw={rgb,255: red,0; green,151; blue,167}, tikzit category=maps]
\tikzstyle{ket_xxs}=[{bra_xxs}, shape border rotate=180, tikzit draw={rgb,255: red,0; green,151; blue,167}, tikzit category=maps]
\tikzstyle{map_multi}=[myGreen, thick, shape=rectangle, tikzit draw={rgb,255: red,25; green,132; blue,232}, tikzit shape=rectangle, minimum width=1.5cm, tikzit category=maps]
\tikzstyle{small_map_multi}=[{map_multi}, minimum width=1cm, tikzit category=maps, shape=rectangle, tikzit draw={rgb,255: red,25; green,132; blue,232}]
\tikzstyle{proj}=[fill=white, shape=rectangle, tikzit category=maps, myGreen]
\tikzstyle{cket}=[bra, shape=cut triangle down, tikzit draw={rgb,255: red,0; green,128; blue,128}, apex angle=110, minimum width=.9cm]
\tikzstyle{cbra}=[cket, tikzit draw={rgb,255: red,0; green,128; blue,128}, shape=cut triangle up]
\tikzstyle{widecket}=[cket, minimum width=1.35cm, apex angle=100]
\tikzstyle{ccket}=[cket, shape=cut triangle down alt, tikzit draw={rgb,255: red,0; green,128; blue,128}, tikzit shape=circle]
\tikzstyle{ccbra}=[cbra, shape=cut triangle up alt, tikzit draw={rgb,255: red,0; green,128; blue,128}, tikzit shape=circle]
\tikzstyle{idendo}=[fill=white, draw={black!80}, shape=circle, dashed, inner sep=0.1]
\tikzstyle{selfadj}=[trap, signal, signal pointer angle=120, signal to=east, fill=white, tikzit shape=rectangle, draw={rgb,255: red,25; green,132; blue,232}]
\tikzstyle{selfadj-conj}=[selfadj, signal to=west, fill=white, tikzit draw=blue, tikzit shape=rectangle]
\tikzstyle{pselfadj}=[selfadj, myGreen, tikzit draw={rgb,255: red,64; green,161; blue,0}, tikzit shape=rectangle]
\tikzstyle{pconjselfadj}=[selfadj, signal to=west, myGreen, tikzit draw={rgb,255: red,64; green,161; blue,0}, tikzit shape=rectangle]
\tikzstyle{pselfadjselfdual}=[tikzit draw={rgb,255: red,64; green,161; blue,0}, signal, signal pointer angle=120, signal to=east and west, fill=white, tikzit shape=circle, myGreen]
\tikzstyle{x-action}=[myOrange, fill=white, shape=rectangle, tikzit draw={rgb,255: red,245; green,124; blue,0}]
\tikzstyle{a-action}=[myPurple, fill=white, shape=rectangle, tikzit draw={rgb,255: red,178; green,121; blue,219}]
\tikzstyle{y-action}=[myBrown, fill=white, shape=rectangle, tikzit draw={rgb,255: red,173; green,100; blue,82}]

\tikzstyle{hilbert}=[-, myTeal, thick, tikzit draw={rgb,255: red,0; green,151; blue,167}]
\tikzstyle{hilbert2}=[-, myGreen, thick, tikzit draw={rgb,255: red,148; green,210; blue,75}]
\tikzstyle{x-wire}=[-, myYellow, very thick, tikzit draw={rgb,255: red,255; green,160; blue,0}]
\tikzstyle{y-wire}=[-, myBrown, very thick, tikzit draw={rgb,255: red,173; green,100; blue,82}]
\tikzstyle{xy-wire}=[-, myOrange, line width=2pt, tikzit draw={rgb,255: red,245; green,124; blue,0}]
\tikzstyle{a-wire}=[-, myPurple, very thick, tikzit draw={rgb,255: red,178; green,121; blue,219}]
\tikzstyle{b-wire}=[-, myBlue, very thick, tikzit draw={rgb,255: red,25; green,118; blue,210}]
\tikzstyle{ab-wire}=[-, myPlum, tikzit draw={rgb,255: red,123; green,31; blue,162}, line width=2pt]
\tikzstyle{hilbertnoarrow}=[-, tikzit draw={rgb,255: red,0; green,151; blue,167}, thick, myTeal]
\tikzstyle{dirwire}=[-]
\tikzstyle{wire}=[-]
\tikzstyle{dirwire02}=[-, tikzit draw=magenta]
\tikzstyle{invis}=[-, draw=none, tikzit draw={rgb,255: red,128; green,128; blue,128}]
\tikzstyle{z-wire}=[-, dashed, draw={rgb,255: red,64; green,64; blue,64}, very thick]
\tikzstyle{dir-a-wire}=[-, a-wire, tikzit draw={rgb,255: red,178; green,121; blue,219}]
\tikzstyle{dir-b-wire}=[-, b-wire, tikzit draw={rgb,255: red,25; green,118; blue,210}]
\tikzstyle{dir-x-wire}=[-, x-wire, tikzit draw={rgb,255: red,255; green,160; blue,0}]
\tikzstyle{dir-y-wire}=[-, y-wire, tikzit draw={rgb,255: red,173; green,100; blue,82}]
\tikzstyle{f-dir-a-wire}=[-, a-wire, tikzit draw={rgb,255: red,178; green,121; blue,219}]
\tikzstyle{f-dir-x-wire}=[-, x-wire, tikzit draw={rgb,255: red,255; green,160; blue,0}]
\tikzstyle{f-dir-y-wire}=[-, y-wire, tikzit draw={rgb,255: red,173; green,100; blue,82}]
\tikzstyle{b-dir-a-wire}=[-, a-wire, tikzit draw={rgb,255: red,178; green,121; blue,219}]
\tikzstyle{b-dir-x-wire}=[-, x-wire, tikzit draw={rgb,255: red,255; green,160; blue,0}]
\tikzstyle{b-dir-y-wire}=[-, y-wire, tikzit draw={rgb,255: red,173; green,100; blue,82}]

\input{./style/tikz_adjustments.sty}

\input{./style/shapes.tex}

\title{Unification of Quantum Graph Properties}

\author[1,2]{Gian Luca Spitzer \thanks{\href{mailto:gian-luca.spitzer@u-bordeaux.fr}{gian-luca.spitzer@u-bordeaux.fr}}}
\affil[1]{LaBRI, Universit\'e de Bordeaux, CNRS, Bordeaux INP, UMR-5800, France}
\affil[2]{Laboratoire de Physique Th\'eorique, Universit\'e de Toulouse, CNRS, UPS, France}

\date{}

\begin{document}

\maketitle

\begin{abstract}
  Many properties of classical graphs are defined in terms of subsets of the vertex set. 
  Examples include connected components, which are subsets $X \subseteq V(G)$ such that $X \times \compl{X}$ and $E(G)$ are disjoint, or independent sets, for which $X \times X$ and $E(G)$ are disjoint.
  Direct generalisations of these definitions to quantum graphs are difficult to achieve, since the natural notion of subsets of a quantum set is much too rigid. As a consequence, approaches to generalising these classical properties to the quantum setting have been eclectic. 
  In some cases, multiple inequivalent definitions of the same notion are in use. We introduce a natural and well-motivated alternative definition of subsets of a quantum set. Building on this, we propose unified definitions of quantum graph properties as straightforward generalisations of the classical definitions. 
  We recover this way the established notions of colourings and connected components.
   For independent sets and cliques, our approach suggests variations that diverge from existing definitions, but address some of their counterintuitive properties. We nevertheless show how to recover an important existing definition of independent sets in our framework. 
\end{abstract}

\tableofcontents

\section{Introduction}
The program of noncommutative discrete mathematics has been quite successful. Rooted in noncommutative geometry, the idea is to show how certain classical mathematical objects are encoded by commutative algebras. One then drops this commutativity requirement and studies the resulting ``noncommutative'' structures. Suprisingly, these structures turn out to have important applications in quantum physics, which, to name but one reason, justify replacing the modifier ``noncommutative'' by ``quantum''. Examples of these structures include (discrete) quantum groups, which inter alia have applications in quantum field theories \cite{chaichian1996introduction}; quantum posets, which are used to define semantics for quantum programming languages \cite{kornell_quantum_2021}; and, central to this paper, \emph{quantum graphs}, which can be used to describe properties of quantum channels \cite{duan_zeroerror_2013}. Many of these structures have moreover been generalised by Kornell under the notion of \emph{discrete quantum structures} \cite{kornell_discrete_2023,kornell_discrete_2024}.

Fundamental to these discrete quantum structures is the notion of a \emph{quantum set}. It is a consequence of Gelfand duality that finite classical sets are in bijective correspondence with commutative finite-dimensional \Cstar-algebras. Dropping the commutativity requirement, we define (finite) quantum sets as arbitrary finite-dimensional \Cstar-algebras.
The classical set on $n$ elements is then recovered as the quantum set $\C^n$. This may also be interpreted as a \emph{linearisation} of the set: It is the complex vector space spanned by its elements, which in turn act as the standard basis.

Classically, a discrete structure is a set equipped with a collection of functions and relations that satisfy certain properties. One approach to defining a quantum analogue of a given structure is to push these properties forward along Gelfand duality and see what properties of the \Cstar-algebra they correspond to. This approach, for example, informs us what functions between quantum sets should look like. In other cases, it is easier to directly linearise a classical definition in a way that it agrees with the original definition on $\C^n$ under the above-mentioned interpretation of set elements as standard basis vectors. Representative of this approach is the definition of binary relations in \cite[Proposition VII.1]{musto_compositional_2018}.

A natural concept to quantise is that of a subset. If one follows the approach of Gelfand duality, one finds that subsets should correspond to \emph{quotients} of the \Cstar-algebra. This definition is widely accepted \cite{kornell_quantum_2020, gromada_examples_2022, kornell_quantum_2026}, but it has a significant problem: The prototypical non-classical quantum set is $M_n$, the algebra of complex $n \times n$ matrices. It is this quantum set that arises in most applications of discrete quantum structures in quantum information theory. That said, $M_n$ is a \emph{simple} algebra. This means that $M_n$ has no non-trivial quotients and hence no non-trivial subsets as a quantum set.

In the context of quantum graphs, this makes the definition essentially useless. A \emph{quantum graph} can be defined as a quantum set of vertices equipped with a binary relation. This is a straightforward quantisation of the classical definition. Naturally, we turn to classical graph theory to inform our study of \emph{quantum} graph theory. In classical graph theory, however, many definitions are phrased in terms of 
subsets:
Connected components, colour classes, independent sets, and cliques are all defined as subsets of the vertex set satisfying certain properties, to name but the most important examples. If one wants to avoid these notions to become trivial for quantum graphs on $M_n$, one has to adopt ad-hoc approaches towards their definition. In particular in the case of independent sets this has led to many non-equivalent definitions being in use. It seems desirable to unify the existing definitions in one framework, and a subset-inspired approach seems the most natural.

Since the approach of quantising subsets through Gelfand duality yields an unsatisfactory definition, a natural step is to instead attempt linearising the notion of subsets directly. We show that proceeding this way, one obtains subsets of a quantum set $X$ as projectors $X \to X$ that are moreover right-module homomorphisms. When viewing the \Cstar-algebra $X$ as a subalgebra of $B(H)$ for some Hilbert space $H$, these projectors are in turn in bijective correspondence with right-modules of $B(H, \C)$ over the commutant of $X$. In this (or similar) phrasing, these \emph{projective subsets} have already shown up in previous work \cite[Appendix B]{kornell_quantum_2020} \cite{kornell_discrete_2023}. However, they have never gotten a standalone treatment, nor have they been used to define properties of quantum graphs.
Kornell in particular calls them \emph{predicates} and explicitly distinguishes them from the notion of subset \cite[p. 350]{kornell_discrete_2023}. We revisit this conception. We show that there are good arguments to really think of these objects as subsets instead of just predicates. We show that they behave a lot like subsets---they admit set-theoretic operations, capture the notion of images and restrictions of classical functions between quantum sets, and enjoy similar equivalent characterisations as classical subsets, for example as unary relations or characteristic vectors. Crucially, they turn out to be the tool we need to unify existing definitions of quantum graph theory.

Overall, our contributions are twofold. Our first contribution is developing the theory of projective subsets. We prove a number of fundamental results that promise to be useful in future work and show that the formalism is nicely compatible with the diagrammatic formalism introduced by Musto, Reutter, and Verdon \cite{musto_compositional_2018} and a bicategorical generalisation that allows refining diagrams, used for example in \cite{verdon_covariant_2022}.

Our second contribution is to use these projective subsets to unify existing definitions of quantum graph properties. Concretely, we give the following definitions in terms of projective subsets.

\begin{itemize}
  \item \textbf{Connectedness:} Existence of a non-trivial $X \psubseteq V(G)$ such that $X \times \compl{X}$ and $E(G)$ are disjoint.
  \item \textbf{Connected Components:} Partitions of $V(G)$ into $X_1, \dots, X_k \psubseteq V(G)$ such that $X_i \times X_j$ and $E(G)$ are disjoint for all $i \neq j \in [k]$.
  \item \textbf{Colourings:} Partitions of $V(G)$ into $X_1, \dots, X_k \psubseteq V(G)$ such that $X_i \times X_i$ and $E(G)$ are disjoint for all $i \in [k]$.
  \item \textbf{Vertex Covers:} $X \psubseteq V(G)$ such that $E(G) \psubseteq (X \times V(G)) \cup (V(G) \times X) \cup (X \times X)$.
\end{itemize}

The first three definitions are equivalent to the established notions in the literature. The fourth is the first definition of vertex covers for quantum graphs to our knowledge. Note that these definitions mirror their classical counterparts: 
If
one replaces the projective subset relation $\psubseteq$ by the ordinary subset relation $\subseteq$, one directly recovers the classical definitions.
In the same spirit, we propose the following definitions for independent sets and cliques, which do not correspond to any existing definition in the quantum graph literature. They satisfy certain desirable properties from the perspective of graph theory: Colourings become partitions of the vertex set into independent sets, a projective subset is an independent set if and only if its complement is a vertex cover, and an independent set of a quantum graph is a clique in the complement graph.

\begin{itemize}
  \item \textbf{Independent Sets:} $X \psubseteq V(G)$ such that $X \times X$ and $E(G)$ are disjoint.
  \item \textbf{Cliques:} $X \psubseteq V(G)$ such that $X \times X \psubseteq E(\loops{G})$.
\end{itemize}

Here, $\loops{G}$ denotes $G$ with ``all loops added''. This action can be phrased in our setting as considering the quantum graph with edge set $E(G) \cup \Delta$, where $\Delta$ is the diagonal relation, cf. \cite[Definition 2.4]{weaver_quantum_2010}.  We moreover show that Weaver's well-studied definition of independent sets \cite{weaver2017quantum} can be recovered in the projective subset formalism. The required definition is of a different flavor than the previous definitions. The previous definitions assume that $G$ is \emph{loopless}, and are extended to arbitrary quantum graphs by considering their loopless forms. On the other hand, to recover Weaver's definition we take into account that $G$ might contain loops. We can also proceed similarly to modify the previously stated clique definition. Perhaps surprisingly, this does not recover Weaver's cliques. Instead it recovers the definition that corresponds to Weaver's independent sets in the complement graph. The original definitions do not satisfy this duality. 
\begin{itemize}
  \item \textbf{Independent Sets (Weaver):} $X \psubseteq V(G)$ such that $(X \times X) \setminus \Delta$ and $E(G)$ are disjoint.
  \item \textbf{Cliques (Dual):} $X \psubseteq V(G)$ such that $(X \times X) \setminus \Delta \psubseteq E(G)$
\end{itemize}

\medskip

The paper is structured as follows. In \cref{sec:prelims}, we recall the necessary background on quantum sets and quantum graphs. We also recall two versions of a graphical calculus: the string diagrams used in the graphical formalism of Musto, Reutter, and Verdon, and their bicategorical generalisation. In \cref{sec:psets}, we introduce projective subsets formally. We present equivalent definitions and their interpretations, highlight the interplay with the diagrammatic formalism, and prove useful fundamental results. In \cref{sec:props}, we then turn our attention to quantum graph properties. We begin by phrasing basic definitions such as graph complements and self-loops in terms of projective subsets. We continue by looking at connected components, colouring, independent sets and cliques, and vertex covers, dedicating a subsection to each. Finally, \cref{sec:conclusion} summarises our findings and discusses potential future work.

\section{Preliminaries}\label{sec:prelims}
First, let us fix some general conventions. For Hilbert spaces $H_1, H_2$, we denote the space of bounded linear functions $H_1 \to H_2$ by $B(H_1, H_2)$ and write $B(H)$ for $B(H, H)$. If $F \in B(H_1, H_2)$ and $H_3 \subseteq H_1$, we write $F(H_3)$ for the subspace $\{F(h) \mid h \in H_3\} \subseteq H_2$. Unless otherwise specified, we assume all Hilbert spaces to be finite-dimensional. We also assume all projectors to be orthogonal projectors: A linear map $P \colon H \to H$ is a \emph{projector} if $P^2 = P\adjoint = P$. Similarly, we call an element $p$ of a \Cstar-algebra a \emph{projection} if $p^2 = p^* = p$. When talking about classical graphs, we assume the vertex set to be $[n] \coloneqq \{1, \dots, n\}$. We also assume all classical graphs to be undirected and loopless. We write $i \sim j$ if the vertices $i$ and $j$ are adjacent.

\subsection{The Graphical Calculus}

Instead of working with the \Cstar-algebras corresponding to quantum sets directly, we will work in an equivalent categorical framework introduced by Vicary \cite{vicary_categorical_2011}. Here, \Cstar-algebras uniquely correspond to special symmetric $\dagger$-Frobenius monoids in $\FdHilb$, the category of finite-dimensional Hilbert spaces. This formulation makes explicit some of the emergent properties of \Cstar-algebras and allows us to reason about quantum sets by means of the powerful string diagrammatic calculus for monoidal categories. In the context of quantum graphs, this formalism was first established by Musto, Reutter, and Verdon \cite{musto_compositional_2018}. The graphical calculus can be refined by means of vertical categorification, passing to the $2$-category $\TFdHilb$. 

\paragraph{The Category FdHilb.}  We begin by defining the simple graphical calculus. $\FdHilb$ is a symmetric monoidal category, whose monoidal product is given by the tensor product and whose braiding morphism is the \emph{swap map} $\sigma$. It is moreover canonically equipped with a contravariant involutive endofunctor $\dagger$ that is the identity on objects, and maps a linear map $f\colon X \to Y$ between Hilbert spaces $X, Y$ to its adjoint $f\adjoint \colon Y \to X$. We can reason formally about any such category in terms of \emph{string diagrams}.
The primitives of the calculus are \emph{strings} (or \emph{wires}) and \emph{boxes}.
\begin{equation*}
  \begin{tikzpicture}
	\begin{pgfonlayer}{nodelayer}
		\node [style=none] (0) at (0, -0.75) {};
		\node [style=none] (2) at (0, 0.75) {};
	\end{pgfonlayer}
	\begin{pgfonlayer}{edgelayer}
		\draw [style=dir-x-wire, in=270, out=90] (0.center) to (2.center);
	\end{pgfonlayer}
\end{tikzpicture}
 \qquad\quad \begin{tikzpicture}
	\begin{pgfonlayer}{nodelayer}
		\node [style=map] (0) at (0, 0) {$f$};
		\node [style=none] (1) at (0, 0.75) {};
		\node [style=none] (2) at (0, -0.75) {};
	\end{pgfonlayer}
	\begin{pgfonlayer}{edgelayer}
		\draw [style=dir-x-wire] (2.center) to (0);
		\draw [style=dir-a-wire] (0) to (1.center);
	\end{pgfonlayer}
\end{tikzpicture}

\end{equation*}
Strings represent objects, while boxes represent morphisms. In our case the strings represent (elements of) Hilbert spaces, while boxes are linear maps. Composition $g \circ f$ of functions is represented by serial composition of diagrams, while tensor products $f \otimes h$ are represented by parallel composition. Diagrams are read from bottom to top.
\begin{equation*}
  \input{diagrams/composition.tikz} \qquad\qquad \input{diagrams/tensorprod.tikz}
\end{equation*}
It follows from the representation of the monoidal product that the monoidal unit, $\C$ in the case of $\FdHilb$, has to be represented by the empty diagram. The swap map is represented by crossing two wires. Since $\FdHilb$ is symmetric, it does not matter which wire crosses on top and which on the bottom.
\begin{equation*}
  \input{diagrams/swap.tikz} 
\end{equation*}
Applying the $\dagger$-functor, which in our case correspond to taking adjoints, is represented by mirroring the diagram across a horizontal axis, but keeping the original arrow orientations. For example, we have
\begin{equation*}
  \input{diagrams/composition-dagger.tikz}\enspace.
\end{equation*}
$\FdHilb$ is moreover \emph{rigid}, that is every object $X$ has a dual $X^*$ such that there exist morphisms $\eta\colon \C \to X^* \otimes X$ and $\epsilon \colon X \otimes X^* \to \C$ satisfying the snake equations $(\epsilon \otimes \id)(\id \otimes \eta) = \id_X$ and $(\id \otimes \epsilon)(\eta \otimes \id) = \id_{X^*}$. We represent the dual of an element by the same wire, but with the arrows pointing ``the wrong way''. The duality morphisms can then be represented by 
\begin{equation*}
  \input{diagrams/cap.tikz} \qquad\qquad\qquad \input{diagrams/cup.tikz}
\end{equation*}
and the snake equations read
\begin{equation*}
  \input{diagrams/snakeeq-2.tikz} \qquad\qquad \input{diagrams/snakeeq.tikz} \enspace. 
\end{equation*}
Using the duality morphisms we may define the transpose of a linear map.

\begin{definition}\label{def:transpose}
  Let $f\colon X \to Y$ be a linear map. Its \emph{transpose} is the map $f\transpose \colon Y^* \to X^*$ defined as
  \begin{equation}\label{eq:transpose}
    \input{diagrams/transpose.tikz}\enspace.
  \end{equation}
\end{definition}

The power of string diagrams is that they can be used in lieu of symbolic equations to prove equalities. Concretely, every equation that can be derived in the diagrammatic calculus by moving and bending strings, sliding boxes along wires, and rotating boxes also holds in $\FdHilb$, cf. \cite[Theorem 3.28]{heunen_categories_2019}. We say that two diagrams are \emph{isomorphic} if one can be deformed into the other by means of these operations. One such transformation would be to consider the right-hand side of \cref{eq:transpose} and pulling the bent wires taut. This will result in a 180 degree totation of the $f$ box. We conclude that taking the transpose of a linear map correspond to rotating the corresponding diagram by 180 degrees. By using non-symmetric boxes for maps, we may thus get rid of superscripts to represent transposes and adjoints. We let
\begin{equation*}
  \input{diagrams/nonsym-func.tikz}~, \qquad \input{diagrams/nonsym-func-adjoint.tikz}~, \qquad \input{diagrams/nonsym-func-transpose.tikz}~, \qquad \input{diagrams/nonsym-func-conj.tikz}~.
\end{equation*}
The last equation corresponds to the conjugate of a linear map, defined as $\overline{f} = (f\adjoint)\transpose = (f\transpose)\adjoint$. The corresponding operation in the diagrammatic calculus must thus be mirroring across a horizontal axis while keeping arrow orientations the same, followed by a 180 degree rotation (or vice versa). This corresponds to mirroring across a vertical axis while reversing arrow orientations. We can also use the symmetries of boxes to denote invariance under these operations. For example, a self-adjoint map can be represented as follows.
\begin{equation*}
  \begin{tikzpicture}
	\begin{pgfonlayer}{nodelayer}
		\node [style=selfadj] (0) at (0, 0) {$f$};
		\node [style=none] (1) at (0, 0.75) {};
		\node [style=none] (2) at (0, -0.75) {};
	\end{pgfonlayer}
	\begin{pgfonlayer}{edgelayer}
		\draw [style=dir-x-wire] (2.center) to (0);
		\draw [style=dir-x-wire, in=270, out=90] (0) to (1.center);
	\end{pgfonlayer}
\end{tikzpicture}

\end{equation*}
At times this notation introduces too much visual noise, in which case we will revert to circular boxes and superscripts. A special case are morphisms $\C \to X$, which correspond to elements of $X$. Since $\C$ is represented by the empty diagram, these are boxes with one outgoing wire. We represent them as triangles with a corner cut off.
\begin{equation*}
  \begin{tikzpicture}
	\begin{pgfonlayer}{nodelayer}
		\node [style=cket] (0) at (0, -0.25) {$x$};
		\node [style=none] (1) at (0, 0.5) {};
	\end{pgfonlayer}
	\begin{pgfonlayer}{edgelayer}
		\draw [style=x-wire] (0) to (1.center);
	\end{pgfonlayer}
\end{tikzpicture}
 
\end{equation*}
We can now define $\dagger$-Frobenius monoids. 

\begin{definition}
  A \emph{monoid} $X$ in $\FdHilb$ is a finite-dimensional Hilbert space equipped with a linear map $m \colon X \otimes X \to X$, called \emph{multiplication}, and a linear map $u\colon \C \to X$, called \emph{unit}, that are represented by the diagrams
  \begin{equation*}
    m =\enspace \input{diagrams/mult.tikz}  \qquad\qquad\text{\&}\qquad\qquad u =\enspace \begin{tikzpicture}
	\begin{pgfonlayer}{nodelayer}
		\node [style=x-frob] (0) at (0, -0.25) {};
		\node [style=none] (2) at (0, 0.25) {};
	\end{pgfonlayer}
	\begin{pgfonlayer}{edgelayer}
		\draw [style=dir-x-wire] (0) to (2.center);
	\end{pgfonlayer}
\end{tikzpicture}

  \end{equation*}
  and satisfy
  \begin{center}
    \begin{minipage}{0.48\textwidth}
      \centering
      \begin{equation*}
        \input{diagrams/assoc.tikz}
      \end{equation*}
      \small (1) associativity
    \end{minipage}
    \begin{minipage}{0.48\textwidth}
      \centering
      \begin{equation*}
        \input{diagrams/unitality.tikz}\enspace.
      \end{equation*}
      \small (2) unitality
    \end{minipage}
  \end{center}
\end{definition}

As a consequence of the $\dagger$-functor, every monoid in $\FdHilb$ is automatically equipped with a \emph{comultiplication} $m\adjoint \colon X \to X \otimes X$ and a \emph{counit} $u\adjoint \colon X \to \C$. They satisfy the adjoints of conditions (1) and (2), called \emph{coassociativity} and \emph{counitality}. 

\begin{definition}
  A \emph{$\dagger$-Frobenius monoid} is a monoid satisfying the \emph{Frobenius property}
    \begin{equation*}
        \input{diagrams/frob-cond.tikz}\enspace.
    \end{equation*}
  It is called
  \begin{enumerate}[resume]
    \item \emph{special} if 
      \begin{equation*}
        \input{diagrams/special.tikz}\enspace,
      \end{equation*}
    \item \emph{symmetric} if 
      \begin{equation*}
        \input{diagrams/symmetric.tikz}\enspace.
      \end{equation*}
  \end{enumerate}
\end{definition}

Any $\dagger$-Frobenius monoid is self-dual. Indeed, it is a consequence of the Frobenius property and (co-)unitality that the morphisms
\begin{equation*}
  \input{diagrams/monoid-duality-cap.tikz} \qquad\qquad\qquad \input{diagrams/monoid-duality-cup.tikz}
\end{equation*}
satisfy the snake equations. This means that we may omit the arrows on the wires. We are particularly interested in \emph{$\dagger$-SSFMs}: $\dagger$-Frobenius monoids that are both special and symmetric. As the following result shows, these are precisely the categorical objects that correspond to quantum sets.

\begin{proposition}[Theorem 4.7 in \cite{vicary_categorical_2011}]\label{prop:cstarssfmequiv}
  The category of finite-dimensional \Cstar-algebras is equal to the category of special symmetric $\dagger$-Frobenius monoids in $\FdHilb$. In particular, every $\dagger$-SSFM in $\FdHilb$ admits a norm that turns it into a \Cstar-algebra whose involution is given by the conjugate functor.
\end{proposition}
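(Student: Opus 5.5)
The plan is to prove the two directions separately: every finite-dimensional \Cstar-algebra yields a $\dagger$-SSFM, and every $\dagger$-SSFM carries a \Cstar-structure. Throughout, morphisms on both sides are just linear maps preserving the relevant structure, so the categorical statement reduces to matching objects. In the second direction, the involution is to be read off the Frobenius structure as the conjugate $\overline{\cdot}$ of an element, i.e.\ its transpose adjoint with respect to the self-duality given by the Frobenius cup and cap.

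\textbf{From \Cstar-algebras to $\dagger$-SSFMs.} By Artin--Wedderburn, a finite-dimensional \Cstar-algebra is $A \cong \bigoplus_i M_{n_i}$. I would equip $A$ with the inner product
\begin{equation*}
\langle a, b\rangle = \sum_i n_i \Tr_i(a_i^* b_i).
\end{equation*}
With $m$ the algebra multiplication and $u$ the unit, I then check the axioms directly.
\begin{enumerate}[(i)]
  \item The Frobenius law amounts to $m\adjoint$ being an $A$-bimodule map, which holds because the inner product is induced by a trace: $\langle ab, c\rangle = \langle b, a^*c\rangle$.
  \item Symmetry is the trace property $\phi(ab) = \phi(ba)$ for $\phi = u\adjoint$.
  \item Speciality, $m \circ m\adjoint = \id$, is what forces the normalisation $n_i$. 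On $M_n$ with the weighting $n\Tr$, the map $m m\adjoint$ is a scalar times $\id$. Using matrix units $e_{jk}/\sqrt{n}$ as an orthonormal basis, one computes $m\adjoint(e_{jk}) = \tfrac{1}{n}\sum_l e_{jl}\otimes e_{lk}$, so $m m\adjoint(e_{jk}) = e_{jk}$.
\end{enumerate}

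\textbf{From $\dagger$-SSFMs to \Cstar-algebras.} Let $(X, m, u)$ be a $\dagger$-SSFM. The key step is the left regular representation $L\colon X \to \End(X)$, $L(a) = m(a \otimes -)$. This map is an injective algebra homomorphism by unitality and associativity. I would define $a^* := \overline{a}$, obtained by bending the wire of the state $a$ with the Frobenius cap and then taking the dagger. Diagrammatically, using the Frobenius law, one shows $L(a)\adjoint = L(a^*)$; this sliding argument is the standard "adjoint of left multiplication" lemma. It follows that $L(X)$ is a $\dagger$-closed unital subalgebra of $B(X)$, hence a \Cstar-algebra under the operator norm. Pulling this norm back along $L$ gives the required norm, with the involution being the conjugate. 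Symmetry is needed to show that $a \mapsto a^*$ is antimultiplicative and involutive, consistently with the left/right conventions.

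\textbf{Main obstacle.} I expect the hard part to be showing that the two constructions are mutually inverse on objects. That is, the $\dagger$-SSFM structure recovered from a \Cstar-algebra must use exactly the inner product above, so that the correspondence is a genuine equality of categories. Concretely, one must show that speciality plus symmetry pin the Hilbert space inner product down to $\sum_i n_i \Tr_i$. My approach is as follows. Symmetry makes $u\adjoint$ a faithful trace, so it equals $\sum_i \lambda_i \Tr_i$ with $\lambda_i > 0$. Then $m m\adjoint = \id$, computed blockwise as above, forces $\lambda_i = n_i$.
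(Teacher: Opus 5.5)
The paper does not prove this proposition; it quotes it from Vicary. Your outline is the standard argument behind that result: the weighted trace inner product $\sum_i n_i \Tr_i(a_i^* b_i)$ in one direction, and the left regular representation with $L(a)\adjoint = L(a^*)$ in the other. Note that $\sum_i n_i\Tr_i(a_i^*b_i) = \Tr(L(a^*b))$, so this inner product does not depend on the chosen decomposition $A \cong \bigoplus_i M_{n_i}$. Your checks of the Frobenius law, symmetry, speciality and the adjoint lemma are correct.

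\textbf{The gap.} The step that does not go through as written is the uniqueness argument you single out as the main obstacle. Symmetry only says that $u\adjoint$ is a trace; it gives neither positivity nor faithfulness. Your blockwise computation of $m m\adjoint$ also tacitly assumes that the given Hilbert space inner product on $X$ is $\sum_i \lambda_i \Tr_i(a_i^* b_i)$. What is missing is that a $\dagger$-Frobenius monoid recovers its inner product from its counit. By unitality and the adjoint lemma,
\[
  \langle a, b \rangle = \langle L(a)u, b \rangle = \langle u, L(a^*) b \rangle = u\adjoint(a^* b).
\]
This makes $u\adjoint$ a faithful positive functional. Symmetry then gives $u\adjoint = \sum_i \lambda_i \Tr_i$ with $\lambda_i > 0$ under a $*$-isomorphism $X \cong \bigoplus_i M_{n_i}$. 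Only at this point does speciality force $\lambda_i = n_i$ and determine the inner product.

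\textbf{Two smaller corrections.}

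First, symmetry plays no role in obtaining the \Cstar-structure. Conjugate-linearity, antimultiplicativity and involutivity of $a \mapsto a^*$ follow from $L(a^*) = L(a)\adjoint$ and injectivity of $L$ alone; for example $L((ab)^*) = L(b)\adjoint L(a)\adjoint = L(b^* a^*)$. So every $\dagger$-Frobenius monoid in $\FdHilb$ is a \Cstar-algebra. Symmetry only makes the two ways of bending the wire agree, so that \emph{the} conjugate is unambiguous. It is, however, indispensable for uniqueness: on $M_n$, the inner products $c\Tr(Q a^* b)$ with non-scalar $Q > 0$ and $c = \Tr(Q^{-1})$ give further special, non-symmetric structures.

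Second, an equality of categories requires two further things.
\begin{itemize}
  \item You must fix the morphisms of $\dagger$-SSFMs as monoid homomorphisms commuting with the conjugate. These are exactly the unital $*$-homomorphisms.
  \item You must also check the other round trip. For the structure built from a \Cstar-algebra, $\langle ab, c \rangle = \langle b, a^* c \rangle$ gives $L(a)\adjoint = L(a^*)$, so the conjugate is the original involution. The pulled-back norm is the original one because an injective $*$-homomorphism between \Cstar-algebras is isometric.
\end{itemize}
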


\begin{example}
  As mentioned in the introduction, the classical set with $n$ elements corresponds to the quantum set $\C^n$. This becomes a special symmetric $\dagger$-Frobenius monoid as follows. We choose an orthonormal basis $\ket{1}, \dots, \ket{n}$, the \emph{standard basis}, and define $m$ as the linear extension of
  \begin{equation*}
    \C^n \otimes \C^n \to \C^n, \quad \ket{i} \otimes \ket{j} \mapsto \delta_{ij} \ket{i}.
  \end{equation*}
  In other words, the multiplication on $\C^n$ is componentwise. The unit with respect to this multiplication must then be given by
  \begin{equation*}
    \C \to \C^n, \quad c \mapsto c \mathbf{1} ,
  \end{equation*}
  where
  \begin{equation*}
    \mathbf{1} \coloneqq \sum_{k=1}^n \ket{k}.
  \end{equation*}
  From this, the comultiplication can be determined to be the linear extension of
  \begin{equation*}
    \C^n \to \C^n \otimes \C^n, \quad \ket{i} \mapsto \ket{i} \otimes \ket{i},
  \end{equation*}
  while the counit is the sum-of-entries map $\C^n \to \C$. 
  It is not hard to verify using the above definitions that standard basis elements must be self-conjugate, $\overline{\ket{i}} = (\ket{i}\adjoint)\transpose = \ket{i}$. Representing the self-conjugacy in the diagrammatic calculus, we draw standard basis elements as full triangles instead of triangles with a corner cut off.  
\end{example}

The theory of $\dagger$-SSFMs inherits the following structure theorem from the theory of \Cstar-algebras, which can also be proved independently.

\begin{proposition}[cf. Corollary 5.34 in \cite{heunen_categories_2019}]\label{prop:ssfmstructure}
  Every special symmetric $\dagger$-Frobenius monoid in $X$ in $\FdHilb$ is of the form $X = \bigoplus_{i = 1}^k M_{n_i}$, where for $a, b \in X$ with $a = \bigoplus_i a_i$ and $b = \bigoplus_i b_i$ it holds that
  \begin{align*}
    m(a \otimes b) &= \bigoplus_i \frac{1}{\sqrt{n_i}} a_ib_i\\
    u &= \bigoplus_i \sqrt{n_i} \identity_{n_i} \\
    u\adjoint(a) &= \bigoplus_i \sqrt{n_i} \Tr(a).
  \end{align*}
\end{proposition}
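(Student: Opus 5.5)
The plan is to start from the classical structure theory of finite-dimensional \Cstar-algebras and then show that the Frobenius structure is forced once the algebra is fixed. By \cref{prop:cstarssfmequiv}, a $\dagger$-SSFM $X$ is a finite-dimensional \Cstar-algebra. The Artin--Wedderburn theorem for finite-dimensional \Cstar-algebras then gives a $*$-isomorphism of algebras $X \cong \bigoplus_{i=1}^k M_{n_i}$. What remains is to determine the inner product on $X$, and to show that the stated normalisations of $m$, $u$ and $u\adjoint$ are exactly what the special and symmetric conditions force. The scalar factors in the statement come from this inner product.

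\textbf{Step 1: the form of the inner product.} In a $\dagger$-Frobenius monoid we have $\langle a, b\rangle = u\adjoint(m(\overline{a}\otimes b))$, with $\overline{a}=a^*$ the involution from \cref{prop:cstarssfmequiv}. So the inner product is determined by the functional $\phi = u\adjoint$ via $\langle a,b\rangle = \phi(a^*b)$, up to how the multiplication is scaled. Symmetry of the Frobenius monoid says that $\phi$ is tracial, i.e.\ $\phi(ab)=\phi(ba)$. A tracial functional on $\bigoplus_i M_{n_i}$ must have the form $\phi(a)=\sum_i \lambda_i \Tr(a_i)$. Positivity of the inner product then forces $\lambda_i>0$.

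\textbf{Step 2: rescale to reach the stated form.} I will work with the underlying algebra product $ab$ and write $m$ as a rescaled version of it, so that the Hilbert-space structure matches. I will compute $m\adjoint$ on matrix units $e^{(i)}_{pq}$, which are orthogonal with norm$^2$ equal to $\lambda_i$. This gives $m\adjoint(e^{(i)}_{pq}) = \lambda_i^{-1}\sum_r e^{(i)}_{pr}\otimes e^{(i)}_{rq}$. Hence $m m\adjoint$ acts on block $i$ as multiplication by $n_i/\lambda_i$. Speciality ($mm\adjoint=\id$) therefore forces $\lambda_i = n_i$.

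To match the normalisation in the statement, I then transport along the unitary that rescales block $i$ by $\sqrt{n_i}$. This identifies the Hilbert space with $\bigoplus_i M_{n_i}$ carrying the standard Hilbert--Schmidt inner product. Under this identification the product becomes $a_ib_i/\sqrt{n_i}$, the unit becomes $\sqrt{n_i}\,\identity_{n_i}$, and the counit becomes $\sqrt{n_i}\Tr$. I will check each of these by direct substitution.

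\textbf{Main obstacle.} The hardest step is doing the bookkeeping of the rescaling consistently across $m$, $u$, $u\adjoint$ and the inner product at the same time. In particular, the adjoint $m\adjoint$ must be computed with respect to the correct (rescaled) inner product. Showing that symmetric Frobenius functionals are tracial is routine, done by bending wires.
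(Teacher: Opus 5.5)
Your proof is correct. It follows the route the paper indicates: the paper gives no proof of its own, citing Heunen--Vicary and remarking that the result is inherited from the structure theory of finite-dimensional \Cstar-algebras, which is exactly what you do via \cref{prop:cstarssfmequiv} and Artin--Wedderburn. From there, symmetry makes the Frobenius form $\sum_i\lambda_i\Tr(a_i)$ with $\lambda_i>0$, speciality forces $\lambda_i=n_i$, and rescaling block $i$ by $\sqrt{n_i}$ is unitary onto the Hilbert--Schmidt inner product, giving precisely the stated $m$, $u$ and $u\adjoint(a)=\sum_i\sqrt{n_i}\Tr(a_i)$ (the intended reading of the statement's counit formula).
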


In other words, every $\dagger$-SSFM is a direct sum of matrix algebras, where the multiplication is given by componentwise rescaled matrix multiplication. 

\paragraph{The Category 2FdHilb.} It is often useful to refine the string diagrammatic calculus in order to ``look inside of'' linear maps. Illustrative of this idea is the isomorphism $M_n \cong \C^n \otimes \C^n$ given by the vectorisation map. Graphically, the isomorphism is given by
\begin{equation}\label{eq:mncncniso}
  M_n \to \C^n \otimes \C^n,\qquad \input{diagrams/mn-cncn-iso.tikz}\enspace.
\end{equation}
and allows us to replace a single $M_n$ wire by two parallel $\C^n$ wires. Under this isomorphism, the multiplication and unit maps become
\begin{equation*}
  \input{diagrams/mn-cncn-mult.tikz}  \qquad\qquad\qquad \input{diagrams/mn-cncn-unit.tikz}\enspace.
\end{equation*}
Up to rescaling, this is the \emph{endomorphism monoid} of $\C^n$ in $\FdHilb$, cf. \cite[Definition 3.16]{vicary_categorical_2011}. 
We would like to extend this double-wire approach to arbitrary $\dagger$-SSFMs, but general direct sums of matrix algebras cannot be written as a tensor product of two factors. Instead, we accomplish this by modifying the tensor product, and the correct categorical tool for this are $2$-categories.

For an introduction to $2$-categories, we refer to reader to Chapter 8 in \cite{heunen_categories_2019}. Concretely, we will work with the $2$-category $\TFdHilb$. This category has many equivalent definitions, for the most common one see \cite[Section 8.2]{heunen_categories_2019}. We instead define it as the special case of the category $\mathbf{2Rep}(G)$ introduced in \cite{verdon_covariant_2024} where the group $G$ is trivial. This leads to the following definition.

\begin{definition}
  The $2$-category $\TFdHilb$ is defined by the following data.
  \begin{quote}
    \begin{tabular}{ l l }
      Objects: & $\dagger$-SSFMs in $\FdHilb$,\\
      1-Morphisms: & $\dagger$-bimodules,\\
      2-Morphisms: & bimodule homomorphisms,\\
      Horizontal composition: & interior tensor product, \\
      Vertical composition: & natural composition. \\
    \end{tabular}
  \end{quote}
\end{definition}

For the precise definition of these terms, we refer the reader to \cite{verdon_covariant_2024}. Here, we give an intuitive overview. For $\dagger$-SSFMs $A, B$, a \emph{$\dagger$-$(A, B)$-bimodule} ${_A}X_B$ is an object of $\FdHilb$ that is equipped with a left action of $A$ and a right action of $B$ that are compatible in the obvious way with the respective multiplication, comultiplication, and unit of $A$ and $B$. We also require the actions to commute. Given two $\dagger$-$(A, B)$-bimodules ${_A}X_B$ and ${_A}Y_B$, a \emph{bimodule homomorphism} is a morphism $X \to Y$ that commutes with the actions of $A$ and $B$. The identity $2$-morphisms are precisely given by the corresponding identity maps, which are automatically bimodule homomorphisms. The identity $1$-morphism of an object $A$ is given by ${_A}A_A$, whose left and right actions are given by the $\dagger$-SSFM multiplication.

The crucial definition is that of the \emph{interior tensor product}. Since it is the horizontal composition in our $2$-category, we always take it with respect to an object $B$. To define the interior tensor product ${_A}X \otimes_B Y_C$ of $1$-morphisms ${_A}X_B$ and ${_B}Y_C$, we consider the linear map
\begin{equation*}
  \input{diagrams/joinmap.tikz}
\end{equation*}
in $\FdHilb$, where the inner wire is the $\dagger$-SSFM $B$ and the rectangular boxes are the left- and right actions of $B$ on $X$ and $Y$ respectively. This map is an idempotent in $\FdHilb$ and thus splits into a unique (up to unitary isomorphism) coisometry $\iota \colon X \otimes Y \to X \otimes_B Y$ and an isometry $\iota\adjoint \colon X \otimes_B Y \to X \otimes Y$ such that
\begin{equation*}
  \input{diagrams/idemp-split-1.tikz} \qquad\qquad\qquad\qquad \input{diagrams/idemp-split-2.tikz}
\end{equation*}
The interior tensor product is defined as the domain of $\iota$. It admits left and right actions as
\begin{equation*}
  \input{diagrams/inttprod-left-action.tikz} \qquad\qquad\qquad\qquad \input{diagrams/inttprod-right-action.tikz}
\end{equation*}
and extends to $2$-morphisms as
\begin{equation*}
  \input{diagrams/inttprod-2-morphisms.tikz}.
\end{equation*}

As a $2$-category, $\TFdHilb$ admits a diagrammatic calculus, which is defined in terms of \emph{shaded} string diagrams. Its primitives are strings, boxes, and \emph{regions}. Regions represent objects, strings represent $1$-morphisms, and boxes represent $2$-morphisms. Wires that represent identity $1$-morphisms, and boxes that represent identity $2$-morphisms are invisible.
The idea is that a single string partitions the diagram into a left and a right region. If we label the regions by objects $A$ and $B$, then the string must be a $1$-morphism $A \to B$. 
\begin{equation*}
  \input{diagrams/shd-1-morphism.tikz}
\end{equation*}
Instead of labelling regions with letters, we will usually shade them with different patterns. Since identity $1$-morphisms are invisible, we denote their endomorphisms by floating boxes with a dashed outline.
\begin{equation*}
  \input{diagrams/shd-shading.tikz}
\end{equation*}
Vertical composition of $2$-morphisms is given by serial composition of diagrams, mirroring the case of simple string diagrams. Parallel composition of diagrams is only defined if the rightmost region of the left-hand diagram is equal to the leftmost region of the right-hand diagram. In that case it represents the interior tensor product along the corresponding object.
\begin{equation*}
  \input{diagrams/shd-vertical-comp.tikz} \qquad\qquad\qquad\qquad \input{diagrams/shd-horiz-comp.tikz}
\end{equation*}
Since $1$-morphisms in $\TFdHilb$ are still objects, and $2$-morphisms are morphisms in $\FdHilb$, the string-diagrammatic part of the shaded calculus works exactly the same as for simple string diagrams. In particular, $2$-morphisms admit adjoints, conjugates, and transposes, which are again respectively represented by mirroring across vertical and horizontal axes, and rotations by 180 degrees. Since $\FdHilb$ is rigid, $1$-morphisms have duals: If $X$ is a $1$-morphism $A \to B$, then $X^*$ is a $1$-morphism $B \to A$ with left and right actions given by
\begin{equation*}
  \input{diagrams/dual-left-action.tikz} \qquad\qquad\qquad\quad \input{diagrams/dual-right-action.tikz}\enspace.
\end{equation*}
Just as for the simpler case of string diagrams, one can prove that every equation that can be derived in the shaded diagrammatic calculus also holds in $\TFdHilb$, cf. \cite[Theorem 8.7]{heunen_categories_2019}.

To introduce the generalised double-wire framework, we note that $\TFdHilb$ has a distinguished object: the trivial $\dagger$-SSFM $\C$. We will represent $\C$ as an unshaded region. It is not hard to see that we must have $\End(\C) \cong \FdHilb$. Indeed, every Hilbert space is automatically a $\dagger$-$(\C, \C)$-bimodule. Moreover, if $X$ is a $1$-morphism $\C \to A$, then $X \otimes_A X^*$ is an object of $\End(\C) \cong \FdHilb$ which can be equipped with the following \emph{multiplication} and \emph{unit} morphisms.
\begin{equation}\label{eq:pairofpants}
  \input{diagrams/2-mult.tikz} \qquad\qquad\qquad\quad \input{diagrams/2-unit.tikz}
\end{equation}
The endomorphism $n_X$ is the square root of the so called left-dimension $N_X$ of $X$, which is given in terms of the duality morphisms of $X$ as
\begin{equation*}
  \input{diagrams/shd-leftdim.tikz}\enspace,
\end{equation*}
see \cite[Section 2]{verdon_covariant_2024} for more details. Crucially, these morphisms give $X \otimes X^*$ the structure of a $\dagger$-SSFM in $\FdHilb$, and up to isomorphism, every $\dagger$-SSFM is of this form, cf. \cite[Corollary 3.37]{chen2022q}.

There is a canonical choice for $X$ and $A$. Let $Y$ be an arbitrary $\dagger$-SSFM in $\FdHilb$. By \cref{prop:ssfmstructure}, we have $Y = \bigoplus_{i = 1}^k M_{n_i}$. We let $X = \bigoplus_{i = 1}^k \C^{n_i}$ and $A = \C^k$. We equip $X$ with the structure of a $\dagger$-$(\C, A)$-module by letting $A$ act by summand-wise scalar multiplication. It follows that $A$ acts the same way on $X^* = \bigoplus_{i = 1}^k (\C^{n_i})^*$. Now consider again the map
\begin{equation*}
  \input{diagrams/joinmap-simpl.tikz}\enspace.
\end{equation*}
The unit and comultiplication is that of $\C^k$, so comultiplying the unit yields the element $\sum_{i = 1}^k \ket{i} \otimes \ket{i}$. Acting on $X$ and $X^*$ thus precisely annihilates the diagonal elements, which yields $\bigoplus_{i, j = 1}^k \delta_{ij}~ \C^{n_i} \otimes (\C^{n_i})^*$. Consequently, splitting this map yields the coisometry $\iota \colon X \otimes X^* \to \bigoplus_{i = 1} \C^{n_i} \otimes (\C^{n_i})^* = \bigoplus_{i = 1} M_{n_i}$ which drops the off-diagonal summands. It follows that $X \otimes_A X^* = Y$ as desired. Graphically, we recover
\begin{equation*}
  \input{diagrams/shd-doublewire.tikz}\enspace.
\end{equation*}
The endomorphism $n_X \in \End(\id_A) \cong B(\C^k)$ can be determined to act as multiplication by the vector $[\sqrt{n_1}, \dots, \sqrt{n_k}] \in \C^k$, which recovers precisely the normalisation of \cref{prop:ssfmstructure}. 
There are two special cases of this construction. The first case is when $Y = M_n$. In this case, we have $A = \C$ and the above diagram becomes
\begin{equation*}
  \input{diagrams/mn-doublewire.tikz}, 
\end{equation*}
recovering the endomorphism monoid of $\C^n$ in $\FdHilb$. Since we know that $\C^n$ is self-dual, we may or may not annotate the wires with arrows. The other case is when $Y = \C^n = \bigoplus_{i = 1}^n \C$. In this case, we have $X = A = X \otimes_A X^* = \C^n \cong (\C^n)^*$, and graphically
\begin{equation*}
  \input{diagrams/cn-doublewire.tikz}\hspace{-.2cm}.
\end{equation*}
It follows that we may simply collapse or ``glue together'' $1$-morphisms $\C^n$ and $(\C^n)^*$ along the object $\C^n$. We will include this procedure as a valid transformation of shaded diagrams.

Finally,  the elements of $\bigoplus_{i = 1}^k M_{n_i}$ are naturally viewed as endomorphisms of $\bigoplus_{i = 1}^k \C^{n_i}$, in which case they are automatically $(\C, \C^k)$-bimodule homomorphisms. The isomorphism given in \cref{eq:mncncniso} thus generalises as
\begin{equation*}
  \bigoplus_{i = 1}^k M_{n_i} \to \bigoplus_{i = 1}^k \C^{n_i} \otimes_{\C^k} \bigoplus_{i = 1}^k (\C^{n_i})^*,\qquad \input{diagrams/shd-cncn-iso.tikz}\enspace.
\end{equation*}

\subsection{Quantum Sets}

With the categorical background out of the way, we can now turn our attention to quantum sets. Recall that, as motivated through Gelfand duality, quantum sets are precisely finite-dimensional \Cstar-algebras. By \cref{prop:cstarssfmequiv}, we may use the following equivalent definition.

\begin{definition}
  A \emph{quantum set} is a special symmetric $\dagger$-Frobenius monoid in $\FdHilb$.
\end{definition}

It also follows from Gelfand duality that every function $X \to Y$ between classical sets gives rise to a $*$-homomorphism $C(Y) \to C(X)$ of the corresponding commutative \Cstar-algebras. By passing to the categorical setting, we find that these functions correspond to morphisms in $\FdHilb$ that preserve the $\dagger$-SSFM structure. Applying the $\dagger$-functor then recovers a morphism that points in the correct direction. This yields the following definition of functions between quantum sets. 

\begin{definition}\label{def:cfunc}
  Let $X, Y$ be quantum sets. A \emph{classical function} $f\colon X \to Y$ is a linear map $X \to Y$ satisfying
  \begin{equation*}
    \input{diagrams/cohom-comult.tikz} \qquad\quad\qquad \input{diagrams/cohom-counit.tikz} \qquad\quad\qquad \input{diagrams/hom-real.tikz}\enspace.
  \end{equation*}
\end{definition}

We call them ``classical functions'' to dinstinguish them both from the ordinary notion of functions, for example between the underlying Hilbert spaces; and from the much more general notion of \emph{quantum functions} \cite[Definition III.11]{musto_compositional_2018}, which are quantisations of classical functions between quantum sets.  

We continue by defining the tensor product of quantum sets.

\begin{definition}
  The \emph{tensor product} $X \otimes Y$ of quantum sets $X$, $Y$ is the $\dagger$-SSFM given by the tensor product of the underlying Hilbert spaces, with multiplication and unit defined as
  \begin{equation*}
    \input{diagrams/tprod-mult.tikz} \qquad\qquad\qquad \input{diagrams/tprod-unit.tikz}\enspace.
  \end{equation*}
\end{definition}

Finally, we state the established definition of quantum subsets, which will later be the starting point of our investigations.

\begin{definition}[cf. Definition 2.7 in \cite{gromada_examples_2022}]
  Let $X$ be a quantum set. A quantum set $Y$ is a \emph{subset} of $X$, in symbols $Y \subseteq X$, if $Y$ is a \Cstar-algebra quotient of $X$.
\end{definition}

This definition is quite rigid. Recall \cref{prop:ssfmstructure} that every quantum set is decomposes as a direct sum of matrix algebras. It is well-known that matrix algebras are simple, that is they do not have any non-trivial quotients. This implies that the subsets of a quantum set $\bigoplus_{i = 1}^k M_{n_i}$ are given precisely by the quantum sets $\bigoplus_{i \in S} M_{n_i}$, where $S \subseteq [k]$. In particular, the quantum set $M_n$ has no non-trivial subsets.

\subsection{Quantum Graphs}

We can now define quantum graphs. There are multiple equivalent definitions of classical graphs that we may quantise. One of them is to define a graph $G$ as a set $V(G)$ of vertices together with an \emph{adjacency matrix} $A_G \in \{0, 1\}^{V(G) \times V(G)}$. This leads to the following quantisation.

\begin{definition}[cf. Definition V.1 in \cite{musto_compositional_2018}]\label{def:qgraphsadj}
  A \emph{quantum graph} $G$ is a tuple $(V(G), A_G)$, where $V(G)$ is a quantum set and $A_G\colon V(G) \to V(G)$ is a linear map, called the \emph{adjacency operator}, satisfying
  \begin{equation*}
    \input{diagrams/adj-op-schur-idemp.tikz} \qquad\quad\quad\qquad \input{diagrams/adj-op-real.tikz}
  \end{equation*}
\end{definition}

The first condition captures that $A_G$ only has entries in $\{0, 1\}$. If the wires are $\C^n$, it says precisely that $A_G$ is idempotent under the entrywise (Schur) product. The second condition says that $A_G$ is real. In the classical case, it says precisely that $A_G$ has real entries in the standard basis.\footnote{The second condition is of course redundant in the classical case, but this is no longer true for general quantum sets.} Similarly, we may define the notions of being undirected and having loops. The former is equivalent to $A_G$ being symmetric, while the latter can be phrased in terms of the Schur product with the identity.

\begin{definition}
  A quantum graph $G$
  \begin{enumerate}
    \item is \emph{undirected} if
      \begin{equation*}
        \input{diagrams/adj-op-undirected.tikz}
      \end{equation*}
    \item has \emph{no loops} if
      \begin{equation*}
        \input{diagrams/adj-op-irreflexive.tikz}\hspace{.15cm}
      \end{equation*}
    \item has \emph{loops at every vertex} if
      \begin{equation*}
        \input{diagrams/adj-op-reflexive.tikz}~.
      \end{equation*}
  \end{enumerate}
\end{definition}

Instead of defining a classical graph through its adjacency matrix, we may also define it in terms of an \emph{edge relation}, which is a binary relation on the vertex set. There are multiple equivalent quantisations of binary relations, we use the categorical approach of \cite{musto_compositional_2018}. This leads to the following alternative definition of quantum graphs.

\setcounter{theorem}{\value{theorem}-2}
\let\oldthedefinition\thedefinition
\renewcommand{\thedefinition}{\oldthedefinition'}
\begin{definition}
  A quantum graph $G$ is a tuple $(V(G), E(G))$, where $V(G)$ is a quantum set and $E(G)\colon V(G) \otimes V(G) \to V(G) \otimes V(G)$ is a projector, called the \emph{edge relation}, satisfying
  \begin{equation*}
    \input{diagrams/edgerel-bimod-cond.tikz}
  \end{equation*}
\end{definition}
\setcounter{theorem}{\value{theorem}+1}
\let\thedefinition\oldthedefinition

The condition on the projector guarantees that for classical quantum sets, $E(G)$ projects precisely onto a space spanned by a subset of $\{\ket{i} \otimes \ket{j} \mid i, j \in [n]\}$. This is the intuitive linearisation of a binary relation viewed as a subset of $V(G) \times V(G)$. We will revisit this in more detail in \cref{ssec:anyothername}. The notions of undirectedness and having loops take the following form.

\setcounter{theorem}{\value{theorem}-1}
\let\oldthedefinition\thedefinition
\renewcommand{\thedefinition}{\oldthedefinition'}
\begin{definition}
   A quantum graph $G$
   \begin{enumerate}
     \item is undirected if 
       \begin{equation*}
         \input{diagrams/edgerel-symmetric.tikz}\hspace{1.4cm}
       \end{equation*}
     \item has no loops if
       \begin{equation*}
         \input{diagrams/edgerel-irreflexive.tikz}\hspace{.6cm}
       \end{equation*}
     \item has loops at every vertex if
       \begin{equation*}
         \input{diagrams/edgerel-reflexive.tikz}
       \end{equation*}
   \end{enumerate}
\end{definition}  
\let\thedefinition\oldthedefinition

Crucially, the two definitions of quantum graphs are fully equivalent, and there exists a graphical translation between the two points of view. 

\begin{proposition}[Theorem VII.7 in \cite{musto_compositional_2018}]\label{prop:adjopedgerelequiv}
  There is a bijective correspondence between adjacency operators $A_G \colon V(G) \to V(G)$ and edge relations $E(G)\colon V(G) \otimes V(G) \to V(G) \otimes V(G)$ given by
  \begin{equation*}
    \input{diagrams/adjop-to-edgerel.tikz} \qquad\qquad\qquad\quad \input{diagrams/edgerel-to-adjop.tikz}
  \end{equation*}
  such that the notions of undirectedness, having no loops, and having loops at every vertex coincide.
\end{proposition}

We mirror the classical graph-theoretic treatment and always assume that our quantum graphs are undirected and loopless unless otherwise specified.

\section{Projective Subsets}\label{sec:psets}
The notion of a quantum subset is quite restrictive. We have seen that it corresponds to the action of decomposing the quantum set as a direct sum of matrix algebras and then taking a subset of direct summands. In particular, this means that quantum sets that are matrix algebras themselves have no non-trivial subsets. On the other hand, for classical sets $\C^n \cong \C^{\oplus n}$ the direct summands correspond to the standard basis vectors and thus to the set elements, so selecting a subset of them coincides with the classical definition of a subset. In some sense, the direct summands are the ``classical elements'' of a quantum set and taking subsets this way is a very ``classical'' operation. 

We introduce a separate, finer variant of subsets that is non-trivial for quantum sets that are full matrix algebras. The idea is to consider a subset of a classical set as the subspace spanned by a subset of the standard basis vectors of $\C^n$. Such a subspace can be encoded as the orthogonal projector onto it. The condition that the subspace be spanned by a subset of the standard basis is equivalent to the projector being a \emph{module homomorphism} when viewing $\C^n$ as a module over itself. Indeed, if a projector $P\colon \C^n \to \C^n$ is a module homomorphism, we have
\begin{equation*}
  P(e_i)e_j = P(e_ie_j) = \delta_{ij} P(e_i),
\end{equation*}
which is only satisfied if $P(e_i)$ is either $0$ or $e_i$. In other words, every standard basis vector is either contained in the image of $P$ or orthogonal to it. It follows that the image of $P$ must be spanned by the standard basis vectors that do not get annihilated. 
Over a commutative algebra like $\C^n$ left and right modules coincide, but in order to generalise this concept to arbitrary quantum sets, we fix a side by convention.

\begin{definition}\label{def:modcond}
  Let $X$ be a quantum set. A linear map $f\colon X \to X$ satisfies the \emph{module condition} if it is a module homomorphism of $X$ viewed as a right-module over itself, or graphically
  \begin{equation*}
    \input{diagrams/module-condition.tikz}\enspace.
  \end{equation*}
\end{definition}

\begin{definition}[Projective Subsets]
  Let $X$ be a quantum set. A \emph{projective subset} $Y \psubseteq X$ of $Y$ is given by a projector $X \to X$ satisfying the module condition. 
\end{definition}

Now there is a well-known isomorphism $\End_R(R) \cong R$ of right-module homomorphisms over a ring $R$ and $R$ itself. In the case of $\dagger$-SSFMs, this isomorphism furthermore restricts to a one-to-one correspondence between projectors on- and projections in the $\dagger$-SSFM.

\begin{lemma}\label{lem:porpionbij}
  Let $X$ be a quantum set. There is a one-to-one correspondence between projectors $P\colon X \to X$ satisfying the module condition and projections $p \in X$.
  \begin{proof}
    We let 
    \begin{equation*}
      \input{diagrams/x-right-mod-x-iso-1.tikz} \qquad\qquad\qquad \input{diagrams/x-right-mod-x-iso-2.tikz}\enspace.
    \end{equation*}
    Then the two definitions are mutually inverse. We have
    \begin{equation*}
      \input{diagrams/por-pion-por.tikz}\enspace,
    \end{equation*}
    while the other direction follows immediately from the unit property. If $P$ is an idempotent with respect to composition, then $p$ is an idempotent with respect to the multiplication in $X$.
    \begin{equation*}
      \input{diagrams/por-idemp-to-pion-idemp.tikz}
    \end{equation*}
    If $P$ is self-adjoint, then $p$ satisfies $p^* = p$.
    \begin{equation*}
      \input{diagrams/por-selfadj-to-pion-selfadj.tikz}
    \end{equation*}
    Conversely, if $p$ is an idempotent then so is $P$.
    \begin{equation*}
      \input{diagrams/pion-idemp-to-por-idemp.tikz}
    \end{equation*}
    Finally, if $p$ satisfies $p^* = p$ then $P$ is self-adjoint.
    \begin{equation*}
      \input{diagrams/pion-selfadj-to-por-selfadj.tikz}\qedhere
    \end{equation*}
  \end{proof}
\end{lemma}

It follows that we may equivalently think of a projective subset $Z \psubseteq X$ as a projection $z \in X$. This duality has a nice interpretation: If $X$ is a classical set and $Z \subseteq X$, then the projector corresponding to $Z$ projects onto $\lspan \{e_i \mid i \in Z\}$, while the projection $z \in X$ corresponding to $Z$ is given by
\begin{equation*}
  z = Z \sum_{x \in X} e_i = \sum_{i \in Z} e_i.
\end{equation*}
It follows that $z$ is precisely the \emph{characteristic vector} of $Z$. 

\begin{notation}
  We will distinguish between the projective subsets themselves, which we will denote by capital letters $X, Y, Z$, their underlying module homomorphisms $\mmap{X}, \mmap{Y}, \mmap{Z}$, and their corresponding projections $\pproj{X}, \pproj{Y}, \pproj{Z}$. A useful mnemonic to distinguish between the two notations is that the hat $\widehat{\phantom{x}}$ is reminiscent of the graphical notation of the multiplication map, and every module homomorphism is given by multiplication with some element.
\end{notation}

\subsection{Projective Subsets by Any Other Name}\label{ssec:anyothername}

We have seen that one may view the projection representation of a projective subset as a generalisation of its characteristic vector. A natural question is whether the corresponding projector also admits such an interpretation. It turns out that it does. In \cite{musto_compositional_2018}, Musto, Reutter, and Verdon define \emph{binary relations} on quantum sets.

\begin{definition}[Definition VII.2 in \cite{musto_compositional_2018}]\label{def:qrel}
  A \emph{binary relation} between two quantum sets $X$ and $Y$ is given by a projector $P\colon X \otimes Y \to X \otimes Y$, satisfying the \emph{bimodule condition}
  \begin{equation*}
    \input{diagrams/bimod-condition.tikz} \quad.
  \end{equation*}
  A \emph{binary relation on a quantum set} $X$ is a relation between $X$ and itself.
\end{definition}

Observe that the bimodule condition is simply the module condition on $X\op \otimes Y$, and hence a relation between $X$ and $Y$ is simply a projective subset of $X\op \otimes Y$. Following this line of thinking, we may view the projector corresponding to a projective subset $Z \psubseteq X$ as a \emph{unary relation} on $X$. This precisely mirrors the classical case, where binary relations are usually thought of as subsets of $X \times Y$ and unary relations as subsets of $X$.

\begin{example}
  An important example for this perspective is the edge relation of a quantum graph. Recall that for a quantum graph $G$, the edge relation $E(G)$ is precisely a binary relation on $V(G)$. Our observations now imply that the edge relation may equivalently be seen as a projective subset of $V(G)\op \otimes V(G)$. 
\end{example}

A very similar perspective is to view a subset of a classical set $X$ as a binary relation between a singleton set and $X$. This perspective also survives the generalisation to the quantum case: The classical singleton set is given by $\C$, so a binary relation between $\C$ and $X$ is a projector $P\colon \C \otimes X \to \C \otimes X$ satisfying the bimodule condition. The bimodule condition on the $\C$ part is trivial, and $\C$ is the monoidal unit in $\FdHilb$, that is $\C \otimes X \cong X$. $P$ is thus equivalently a projector $P\colon X \to X$ satisfying the module condition, and hence gives a projective subset of $X$. This is also nicely seen in the graphical calculus: Wires corresponding to the monoidal unit $\C$ are invisible, so the diagram in \cref{def:qrel} immediately reduces to that in \cref{def:modcond}.

Now there is actually a second established definition of binary quantum relations due to Weaver \cite{weaver_quantum_2010} that precedes Musto, Reutter and Verdon's definition by almost a decade. This definition is of a much more operator-algebraic flavor, but turns out to be equivalent \cite[Proposition VII.5]{musto_compositional_2018}. We can also nicely relate our projective subsets to Weaver's definition of binary relations. Originally, Weaver's definition is stated in terms of von Neumann algebras, but since we are only interested in the finite-dimensional case, the definition simplifies to the following statement about \Cstar-algebras.

\begin{definition}[cf. Definition 2.1 in \cite{weaver_quantum_2010}]\label{def:weaverqrel}
  Let $X$ be a quantum set, viewed as a \Cstar-algebra $X \subseteq B(H)$ for a finite-dimensional Hilbert space $H$. A \emph{binary quantum relation in the sense of Weaver} is a bimodule over its commutant, that is a subspace $S \subseteq B(H)$ satisfying $X'SX' \subseteq S$.
\end{definition}

A priori, this definition seems to depend on the choice of embedding of $X$ into $B(H)$. However, Weaver shows that for any two choices of embedding, there is a one-to-one correspondence between the resulting quantum relations \cite[Theorem 2.7]{weaver_quantum_2010}. We can express the connection between \cref{def:qrel} and \cref{def:weaverqrel} nicely in the shaded graphical calculus of $\TFdHilb$. Since $X$ is a finite-dimensional \Cstar-algebra, it is isomorphic to a direct sum of matrix algebras $X = \bigoplus_{i = 1}^k M_{n_i}$, and hence its commutant is isomorphic to $\C^k$. 
Without loss of generality, we may assume that $H = \C^N$, where $N = \sum_{i = 1}^k n_i$. 
Now $B(H)$ itself is clearly a bimodule over $X'$, so we may express it graphically as the following $1$-morphism $\C^k \to \C^k$ in $\TFdHilb$, where the dotted areas represent $X'$.
\begin{equation*}
  \input{diagrams/x-bimod-wire.tikz}  
\end{equation*}
Subspaces of $B(H)$ can be encoded by projectors, and if the subspace is a module over a suitable algebra then the projector is a module homomorphism, as the following lemma shows.

\begin{lemma}\label{lem:modproj}
  Let $Y \subseteq B(H)$ be a $*$-subalgebra and $X \subseteq B(H)$ a subspace. Then the following are equivalent.
  \begin{enumerate}
    \item $X$ is a (right, left, bi-) module over $Y$.
    \item The projector onto $X$ is a (right, left, bi-) module homomorphism over $Y$.
  \end{enumerate}
  \begin{proof}
    Without loss of generality, we show the statement for right modules. The other cases are completely analogous. The implication $2 \implies 1$ is straightforward. Let $P\colon B(H) \to B(H)$ be the projector onto $X$ and let $x \in X$. Then for any $y \in Y$ we have
    \begin{equation*}
      xy = (Px)y = P(xy) \in \img P = X.
    \end{equation*}
    For the other direction, note that $B(H)$ admits a canonical inner product as $\lrangle{x, y} = \Tr x^*y$. It follows that the notion of orthogonal complements of subspaces is well-defined. We claim that if $X$ is a right module over $Y$ then so is its orthogonal complement $X^\bot$. Indeed, suppose $x' \in X^\bot$. Then for all $y \in Y$ and $x \in X$ we have
    \begin{equation*}
      \lrangle{x, x'y} = \Tr x^*x'y = \Tr yx^*x' = \Tr\, (xy^*)^*x' = \lrangle{xy^*, x'} = 0,
    \end{equation*}
    where the last equality follows from the fact that $y^* \in Y$ and hence $xy^* \in X$ which is orthogonal to $x'$. It follows that $x'y$ is orthogonal to all $x \in X$ thus $x'y \in X^\bot$, making $X^\bot$ a right module over $Y$. We now show the implication $1 \implies 2$. Let $a \in B(H)$ and $y \in Y$ be arbitrary. We have $a = x + x'$ where $x \in X$ and $x \in X^\bot$. Since $P$ annihilates elements in $X^\bot$, we get
    \begin{equation*}
      P(ay) = P(xy + x'y) = xy = P(a)y,
    \end{equation*}
    where the second equality follows from the fact that both $Y$ and $Y^\bot$ are right modules over $Z$. This concludes the proof.
  \end{proof}
\end{lemma}

Consequently, a bimodule $S \subseteq B(H)$ over the commutant $X'$ is encoded by a projector $P_S \colon B(H) \to B(H)$ that is an $X'$-bimodule homomorphism. This makes it precisely a $2$-morphism $B(H) \to B(H)$ in $\TFdHilb$. A quantum relation according to Weaver is thus given graphically by the following $2$-morphism.
\begin{equation*}
  \input{diagrams/x-bimod-proj.tikz} 
\end{equation*}

Let us now show how one recovers Musto, Reutter, and Verdon's binary relations as projectors $X \otimes X \to X \otimes X$ satisfying the bimodule condition. Since $H$ is finite-dimensional, we have $B(H) \cong H^* \otimes H$. $H$ admits the structure of a right module over $X'$ by interpreting $\C^N = \bigoplus_{i = 1}^k \C^{n_i}$ and letting $X' \cong \C^k$ act by scalar multiplication on the $k$ summands. This allows us to refine the diagram somewhat, replacing the thick $B(H)$ wire by two $H$-wires.
\begin{equation*}
  \input{diagrams/x-bimod-doublewire.tikz} 
\end{equation*}
We now define a projector $P \colon X \otimes X \to X \otimes X$ by horizontal composition with $H$ from the left and with $H^*$ from the right,
\begin{equation*}
  \input{diagrams/mrv-bimod-doublewire.tikz}  
\end{equation*}
Recall that the composition $H \otimes_{\C^k} H^*$ of the $1$-morphisms $H\colon \C \to \C^k$ and $H^* \colon \C^k \to \C$ recovers precisely $X$ as an element of $\End(\mathbf{1}) \cong \FdHilb$, so $P$ is indeed a projector $X \otimes X \to X \otimes X$. Moreover, the pair of pants maps defined in \cref{eq:pairofpants} recover the $\dagger$-SSFM structure of $X$, and it is easy to see this way that $P$ satisfies the bimodule condition.
\begin{equation*}
  \input{diagrams/mrv-bimod-condition.tikz} 
\end{equation*}
Conversely, every map $P \colon X \otimes X \to X \otimes X$ satisfying the bimodule condition also satisfies
\begin{equation*}
  \input{diagrams/mrv-bimod-trace.tikz}
\end{equation*}
as can be seen by plugging in suitable duality morphisms for $H$, connecting the second and third wires, as well as the sixth and seventh wires at the bottom. We thus recover a projector onto a bimodule over the commutant by defining 
\begin{equation*}
  \input{diagrams/mrv-bimod-correspond.tikz}\enspace,
\end{equation*}
and it is easy to see that the two constructions are mutually inverse. We remark that in the case of $X = B(H) = M_N$, our construction reduces precisely to that employed in Proposition VII.11 in \cite{musto_compositional_2018}. There, the authors show that \emph{operator spaces} on $H$---subspaces of $B(H)$---are in bijective correspondence with quantum relations on $B(H)$. In our case, the bicategorical structure allowed us to talk about arbitrary \Cstar-algebras $X \subseteq B(H)$ and quantum relations in the sense of Weaver instead of operator spaces. In this sense, we also show that operator spaces are precisely quantum relations in the sense of Weaver when $X = B(H)$, in which case the commutant is isomorphic to $\C$ and the shading in the diagrams vanishes.

Let us now turn our attention back to projective subsets. Along similar lines as the argument above, we can prove that our definition of projective subsets is equivalent to the following alternative definition in the spirit of Weaver's quantum relations.

\begin{definition}\label{def:projsubsetweaver}
  Let $X$ be a quantum set, viewed as a \Cstar-algebra $X \subseteq B(H)$. A \emph{projective subset in the sense of Weaver} is a right-module of $B(H, \C)$ over the commutant of $X$, that is a subspace $Y \subseteq B(H, \C)$ satisfying $YX' \subseteq Y$. 
\end{definition}

Just as for Weaver's binary relations, one can show that this definition is independent of the embedding of $X$ in $B(H)$. This can be proved directly, but it will also follow from the equivalence to our previous definitions, which do not depend on considering $X$ as a subalgebra of $B(H)$. For now, assume again that $X = \bigoplus_{i = 1}^k M_{n_i}$, $H = \bigoplus_{i = 1}^k \C^{n_i}$, and consequently $X' \cong \C^k$.
By the Riesz representation theorem, we can identify $B(H, \C) \cong H$, in which case the action of some $x \in X'$ is given by blockwise scalar multiplication by the conjugate entries of $x$. Since $X'$ is closed under entrywise conjugate, closure under this action is the same as closure under ordinary scalar multiplication. Finally, viewing $H$ as the diagonal fragment of $B(H)$ allows us to invoke \cref{lem:modproj}, so we may represent a projective subset $Y$ in the sense of Weaver graphically by the following $2$-morphism $P_Y$ in $\TFdHilb$.
\begin{equation*}
  \input{diagrams/pss-modproj.tikz}
\end{equation*}
We now proceed analogously as above. We recover a projector on $X$ by composing on the right with $H^*$
\begin{equation}\label{eq:modtoregproj}
  \input{diagrams/pss-mod-doublewire.tikz}\enspace,
\end{equation}
and it is easy to see that the resulting operator satisfies the module condition of \cref{def:modcond},
\begin{equation*}
  \input{diagrams/pss-mod-condition.tikz}\enspace.
\end{equation*}
Conversely, we may connect the second and third legs by means of the duality morphism for $H$ to show that every projector that satisfies the module condition also satisfies
\begin{equation*}
  \input{diagrams/pss-mod-trace.tikz}\enspace.
\end{equation*}
We hence recover a projector onto a right-module of $H$ over the commutant by setting
\begin{equation*}
  \input{diagrams/pss-mod-correspond.tikz}\enspace.
\end{equation*}
Crucially, one can see that up to realignment $P_Y$ is nothing but the projection $\pproj{Y}$! Indeed, letting
\begin{equation*}
  \input{diagrams/pss-pproj-def.tikz}\enspace,
\end{equation*}
we see that left multiplication by $\pproj{Y}$ gives 
\begin{equation*}
  \input{diagrams/pss-leftmul.tikz}
\end{equation*}
by isomorphism of diagrams. For simplicity, we will denote the both the projection $\pproj{Y} \in X$ and the map $P_Y\colon H \to H$ by $\pproj{Y}$. In particular, it now makes sense to talk about the \emph{image} $\img \pproj{Y} \subseteq H$, which will simplify some of our later definitions. It will always be clear from the context which version of the map is meant.

To conclude this section, it will be useful to consider one last perspective on projective subsets. Namely, given a projective subset $Y \psubseteq X$ we may consider the image of $\mmap{Y}$. We have seen in \cref{lem:porpionbij} that $\mmap{Y}$ is of the form $L_p$ for some projection $p$, so we have $\img \mmap{Y} = pX$. These subspaces are obviously right-ideals, and it is well-known that every right-ideal on a quantum set is of this form. We thus conclude the following.

\begin{proposition}
  Let $X$ be a quantum set. There is a one-to-one correspondence between projective subsets $Y \psubseteq X$ and right-ideals in $X$. 
\end{proposition}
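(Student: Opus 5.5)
The plan is to compose two bijections. By \cref{lem:porpionbij}, projective subsets $Y \psubseteq X$ correspond to projections $p \in X$ via $\mmap{Y} = L_p$, where $L_p$ denotes left multiplication by $p$. So it suffices to show that $p \mapsto pX$ is a bijection from projections of $X$ onto right-ideals of $X$. Composing the two correspondences gives $Y \mapsto \img \mmap{Y}$.

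First, $pX$ is a right ideal, since $(px)x' = p(xx')$. Here I use the ordinary associative multiplication; the rescaling in \cref{prop:ssfmstructure} only changes $m$ by a central invertible factor and does not affect ideals.

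For injectivity, suppose $pX = qX$ with $p, q$ projections. Then $q \in pX$, so $pq = q$ because $p$ acts as the identity on $pX$. Taking adjoints gives $qp = q$. Symmetrically, $qp = p$, hence $p = q$.

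For surjectivity, let $R \subseteq X$ be a right ideal. I will produce a projection $p$ with $R = pX$, using \cref{lem:modproj} with $Y = X$ acting on itself. View $X \subseteq B(H)$ with the trace inner product; right multiplication by $X$ is then a $*$-compatible action as in that lemma. Since $R$ is a right $X$-module, the orthogonal projector $P$ onto $R$ is a right module homomorphism. It therefore satisfies the module condition of \cref{def:modcond}, and is a projector, so it is a projective subset. By \cref{lem:porpionbij}, $P = L_p$ for the projection $p = P(1)$, and so $R = \img P = pX$.

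One subtlety needs care. \cref{lem:modproj} is phrased for subspaces of $B(H)$ and $*$-subalgebras $Y \subseteq B(H)$. I would apply it with $X$ itself playing the role of $B(H)$, acting on itself by right multiplication. The required identity $\langle x, x'y\rangle = \langle xy^*, x'\rangle$ holds for the trace form on $X$, so the proof of the lemma goes through verbatim. I would also check that this inner product agrees, up to a positive central rescaling on each block, with the Hilbert space structure of the $\dagger$-SSFM. This matters because orthogonality and the adjoint used in \cref{lem:porpionbij} must refer to the same inner product. Given the explicit counit $\bigoplus_i \sqrt{n_i}\Tr$, blockwise rescaling does not change orthogonal complements of right ideals, since those decompose blockwise. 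I expect this compatibility check to be the only real obstacle; everything else is formal.

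Alternatively, I could avoid the inner product altogether by the standard algebraic argument. A right ideal $R$ in the finite-dimensional semisimple algebra $X$ is a direct summand, $X = R \oplus R'$ as right modules. Writing $1 = e + e'$ gives an idempotent $e$ with $R = eX$. Then $p = ee^*(1 + (e - e^*)(e^* - e))^{-1}$, or equivalently the range projection of $e$, is a projection with $pX = eX$.
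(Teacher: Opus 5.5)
Your proof is correct and follows the paper's route: compose the bijection of \cref{lem:porpionbij} with $p \mapsto pX = \img \mmap{Y}$. The paper only asserts as well known that every right ideal has the form $pX$, and leaves injectivity implicit. Your injectivity check and your surjectivity argument via \cref{lem:modproj} supply those details, and the inner-product compatibility you flag is automatic. The Hilbert space structure on $\bigoplus_i M_{n_i}$ in \cref{prop:ssfmstructure} is already the Hilbert--Schmidt form $\sum_i \Tr(a_i^* b_i) = u\adjoint(m(a^* \otimes b))$.
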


We will denote the ideal corresponding to a projective subset $Y$ by $\ideal{Y}$.\footnote{Another useful mnemonic to distinguish this notation from those for module homomorphisms and projections is to think of the circle as the dot of the \emph{i} in \emph{i}deal.} Combining all these different points of view yields a proposition by Weaver \cite[Proposition 2.23]{weaver_quantum_2010}, which shows that there is a one-to-one correspondence between binary quantum relations on $X$, one-sided ideals on $X \otimes X\op$, and projections in $X \otimes X\op$; closing the loop.

\subsection{Operations}

Of course, when we are talking about sets, we would like to define operations on them. In particular, we would like to form complements, intersections, and unions. It will be easiest to define these in terms of projections.

\begin{definition}\label{def:ops}
  Let $X$ be a quantum set and $Y, Z \psubseteq X$. 
  \begin{itemize}
    \item The \emph{complement} $\compl{Y}$ of $Y$ is given by $\pproj{\compl{Y}} = \identity - \pproj{Y}$.
    \item The \emph{intersection} $Y \cap Z$ is given by the projection $\pproj{Y \cap Z}$ onto $\img \pproj{Y} \cap \img \pproj{Z}$.
    \item The \emph{union} $Y \cup Z$ is given by the projection $\pproj{Y \cup Z}$ onto $\lspan\;(\img \pproj{Y} \cup \img \pproj{Z})$.
  \end{itemize}
\end{definition}

The \emph{empty projective subset} $\varnothing$ is thus given by the zero-projection, which makes it the unit for taking unions. Similarly, the \emph{complete projective subset}, the quantum set $X$ itself, is given by the identity projection, which makes it the unit for taking intersections. It remains to show that the resulting projections indeed give projective subsets of the correct quantum set. Concretely, we have to show that if $\pproj{Y}, \pproj{Z}$ are right-module homomorphisms over $X'$, then so are 
the constructed projections.
But by \cref{lem:modproj}, it suffices to show that the corresponding images are right modules over $X'$, which is routinely verified.
It is also readily shown that the above operations satisfy De Morgan's laws.

\begin{proposition}
  Let $X$ be a quantum set and $Y, Z \psubseteq X$. Then $\compl{(Y \cup Z)} = \compl{Y} \cap \compl{Z}$ and $\compl{(Y \cap Z)} = \compl{Y} \cup \compl{Z}$.
\end{proposition}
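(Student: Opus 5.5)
The plan is to reduce both identities to standard facts about orthogonal complements of subspaces of the finite-dimensional Hilbert space $H$, using the realisation of projective subsets as projections $\pproj{Y}\colon H \to H$ from \cref{ssec:anyothername}. By \cref{def:ops}, every operation is determined by the image of the corresponding projection. Moreover, a projection is uniquely determined by its image, since it is the orthogonal projector onto it. So it suffices to compare images. For the complement we note that $\img(\identity - \pproj{Y}) = (\img \pproj{Y})^\bot$, because $\pproj{Y}$ is an orthogonal projector.

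For the first law, write $V = \img \pproj{Y}$ and $W = \img \pproj{Z}$. Then $\img \pproj{\compl{(Y \cup Z)}} = (\lspan(V \cup W))^\bot = (V + W)^\bot$. On the other side, $\img \pproj{\compl{Y} \cap \compl{Z}} = V^\bot \cap W^\bot$. The identity $(V+W)^\bot = V^\bot \cap W^\bot$ holds in any inner product space. If $h$ is orthogonal to $V + W$, it is in particular orthogonal to $V$ and to $W$. Conversely, if $h$ is orthogonal to both $V$ and $W$, then by linearity it is orthogonal to every sum $v + w$.

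For the second law, I would either apply the first law to $\compl{Y}$ and $\compl{Z}$ and take complements, or argue directly. Either way we use that $\compl{(\compl{Y})} = Y$, since $\identity - (\identity - \pproj{Y}) = \pproj{Y}$. Applying the first law gives $\compl{(\compl{Y} \cup \compl{Z})} = Y \cap Z$, and complementing both sides yields the claim. The underlying subspace identity, $(V \cap W)^\bot = V^\bot + W^\bot$, uses $U^{\bot\bot} = U$, which holds by finite-dimensionality.

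I expect no real obstacle. The only point to check is consistency: the operations in \cref{def:ops} must be taken in the same realisation, namely as projections on $H$ or, equivalently, as elements of $X$ acting by left multiplication. The paper has already noted that all the resulting projections are again projective subsets via \cref{lem:modproj}, so the identities can be verified purely at the level of subspaces of $H$.
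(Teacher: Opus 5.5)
Your proof is correct. The paper gives no written proof and only says De Morgan's laws are ``readily shown''; your argument is the routine check this alludes to. It uses $\img(\identity - \pproj{Y}) = (\img \pproj{Y})^\bot$, the subspace identity $(V+W)^\bot = V^\bot \cap W^\bot$, and the involutivity $\compl{(\compl{Y})} = Y$ to get the second law from the first.
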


Using projections, we can also define the size of a projective subset.

\begin{definition}
  Let $X$ be a quantum set and $Y \psubseteq X$. The \emph{cardinality}, or \emph{size} of $Y$ is given by $\abs{Y} = \rk \pproj{Y}$.
\end{definition}

This definition diverges from how cardinality is usually defined for quantum sets. Given a quantum set $X$, we usually define its cardinality as the dimension of its underlying \Cstar-algebra. For instance, the cardinality of $M_n$ is $n^2$. On the other hand, we have seen that a quantum set is in particular a projective subset of itself, given by $\pproj{X} = \identity$. This in turn implies $\abs{M_n} = n \neq n^2$. For this reason, the above definition should perhaps be called ``subset cardinality'' instead. Since in the following we will always talk about subset cardinality and never about quantum set cardinality, we refrain from using this phrasing. 

\begin{remark}
We remark that the discrepancy can be resolved by instead defining the cardinality of a subset $Y \psubseteq X$ as $\rk \mmap{Y}$. Almost all of our results stay valid under this alternative definition; the exceptions being \cref{prop:chromnumineq,prop:indepsetweaversep,prop:cliqueweaversep}. The results stay conceptually correct, but the numeric statements would have to be restated. We chose the current definition because it is more natural in certain contexts. For example, the smallest non-zero cardinality of a projective subset is always $1$, while with the alternative definition it depends on the ambient quantum set. On $M_n$, for instance, the smallest non-zero cardinality would be $n$. Our definition also corresponds to how Weaver defines the sizes of independent sets and cliques. 
\end{remark}

Projective subsets can be contained in each other, so we need to extend our subset relation. It is hard not to verify that the following definition satisfies all the properties one would ask from such a relation, like $\varnothing \psubseteq Y$, $Y \cap Z \psubseteq Y$, and $Y \psubseteq Y \cup Z$.

\begin{definition}
  Let $X$ be a quantum set and $Y, Z \psubseteq X$. We let $Y \psubseteq Z$ if $\img \pproj{Y} \subseteq \img \pproj{Z}$.
\end{definition}

Since the ambient quantum set $X$ corresponds to the identity projections, the notions of projective subsets of quantum sets and projective subsets of other projective subsets coincide. Moreover, this definition interacts nicely with the standard notion of subsets of a quantum set. We have seen that taking a (standard) subset of a quantum set $X$ amounts to restricting to a subset of the summands in the direct sum decomposition. A projective subset on the other hand corresponds to a projection in $X$, which decomposes into projections on the individual summands. By choosing some of these projections to be the identity and the others to be the zero-map, we recover precisely the standard notion of subsets. 

\begin{proposition}\label{prop:ssetsarepsets}
  Let $X, Y$ be quantum sets. If $Y \subseteq X$ then $Y \psubseteq X$. 
\end{proposition}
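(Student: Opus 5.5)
We need to show that if $Y$ is a $\Cstar$-algebra quotient of $X$, then $Y$ determines a projective subset of $X$, i.e.\ a projection $\pproj{Y} \in X$. Equivalently, by \cref{lem:porpionbij}, we need a projector $\mmap{Y}\colon X \to X$ satisfying the module condition. The plan is to reduce to the structure theorem and then write down the obvious central projection.

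First we invoke \cref{prop:ssfmstructure} and write $X = \bigoplus_{i=1}^k M_{n_i}$. Then we recall the fact stated after the definition of subsets: every quotient of $X$ is of the form $\bigoplus_{i \in S} M_{n_i}$ for some $S \subseteq [k]$. The reason is that the kernel of a $*$-homomorphism out of $X$ is a two-sided ideal, each $M_{n_i}$ is simple, and so the kernel is a direct sum of some of the summands. Concretely, the quotient map $q\colon X \to Y$ is the coordinate projection that drops the summands $i \notin S$, up to isomorphism.

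Next we define $\pproj{Y} \coloneqq \bigoplus_{i=1}^k \epsilon_i \identity_{n_i}$, where $\epsilon_i = 1$ for $i \in S$ and $\epsilon_i = 0$ otherwise. This is clearly self-adjoint and idempotent in the \Cstar-algebra structure, so it is a projection. Some care is needed with the rescaled multiplication and unit of \cref{prop:ssfmstructure}, which involve factors $\sqrt{n_i}$. Under the identification of $X$ with the abstract \Cstar-algebra via \cref{prop:cstarssfmequiv}, the relevant notion is projection in the \Cstar-algebra sense, so we may either work with the unrescaled algebra or use $\bigoplus_i \epsilon_i \sqrt{n_i}\identity_{n_i}$, the corresponding summands of the unit $u$. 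I would phrase it as "the central projection given by restricting the unit to the summands in $S$."

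By \cref{lem:porpionbij}, $\pproj{Y}$ then yields a projective subset. We should check that it deserves to be identified with $Y$. The corresponding module map $\mmap{Y} = L_{\pproj{Y}}$ is the projector of $X$ onto $\bigoplus_{i\in S} M_{n_i}$, i.e.\ $\img \mmap{Y} = \ideal{Y}$ is exactly the copy of $Y$ inside $X$. Equivalently, $\mmap{Y} = q\adjoint q$, up to normalisation of the embedding. The only mild obstacle is this scaling bookkeeping, together with making precise what "$Y \psubseteq X$" means for a quantum set $Y$: it is the projective subset whose ideal is the summand realising $Y$. With that reading, the statement follows immediately.
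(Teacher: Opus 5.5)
Your proposal is correct and takes essentially the same route as the paper: decompose $X$ into matrix summands, note that a quotient selects a subset $S$ of the summands, and take the central projection that is the identity on the summands in $S$ and zero elsewhere. Your extra bookkeeping about the $\sqrt{n_i}$ normalisation and the identification $\img \mmap{Y} = \ideal{Y}$ is more careful than the paper's informal paragraph, but the argument is the same.
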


Perhaps unsurprisingly, just as there is a natural correspondence between classical sets and their operations on the one hand, and classical (propositional) logic and its logical connectives on the other, there is a natural correspondence between projective subsets and their operations, and Birkhoff and von Neumann's \emph{quantum logic} \cite{birkhoff1975logic}. 
Quantum logic in its original form is quite controversial \cite{heunen2009topos}\cite[Chapter 17]{girard_blind_2011}, so will not got into detail about it here. However, it is well-known that quantum logic differs from classical logic in that it is not \emph{distributive}. 
For us, this leads to difficulties when trying to define a robust notion of disjointness: It is tempting to call two projective subsets $X, Y$ disjoint if $X \cap Y = \varnothing$. However, a desirable property of disjointness is that if $X$ and $Y$ are disjoint, and $X$ and $Z$ are disjoint, that then also $X$ and $Y \cup Z$ are disjoint. Classically, this property is precisely guaranteed by distributivity of $\cap$ over $\cup$, and indeed it fails in the quantum case: Consider the projective subsets $X, Y, Z \psubseteq M_2$ given by the projections onto distinct lines through the origin in $\C^2$. Then $X \cap Y = \varnothing$ and $X \cap Y = \varnothing$, but $Y \cup Z = M_2$ and thus $X \cap (Y \cup Z) = X \neq \varnothing$. Instead, we will define disjointness as \emph{orthogonality} of the corresponding projections.

\begin{definition}\label{def:disjointness}
  Let $X$ be a quantum set. Two projective subsets $Y, Z \psubseteq X$ are called \emph{disjoint} if $\img \pproj{Y} \bot \img \pproj{Z}$.
\end{definition}

Note that this definition has the abovementioned property. In general, it satisfies the following desirable statements, which are easy to verify.

\begin{observation}
  Let $X, Y, Z$ be projective subsets of some quantum set. Then
  \begin{enumerate}
    \item $X$ and $\compl{X}$ are disjoint.
    \item If $X \psubseteq Y$, then $X$ and $\compl{Y}$ are disjoint.
    \item If $X, Y$ are disjoint and $X, Z$ are disjoint, then $X, Y \cup Z$ are disjoint.
  \end{enumerate}
\end{observation}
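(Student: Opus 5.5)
We work entirely with the projection picture of \cref{def:ops} and \cref{def:disjointness}. By the discussion following \cref{def:projsubsetweaver}, each projective subset $W$ corresponds to an orthogonal projector $\pproj{W}$ on $H$, and all relevant notions are statements about the subspaces $\img \pproj{W} \subseteq H$. Disjointness of $W_1$ and $W_2$ means $\img \pproj{W_1} \bot \img \pproj{W_2}$, or equivalently $\pproj{W_1}\pproj{W_2} = 0$. Complement, union and containment translate respectively to orthogonal complement, closed linear span and subspace inclusion. The three items then become elementary linear-algebra facts about subspaces of a finite-dimensional Hilbert space, and we verify them in order.

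For item 1, $\pproj{\compl{X}} = \identity - \pproj{X}$ is the projector onto $(\img \pproj{X})^\bot$. Hence $\img \pproj{X} \bot \img \pproj{\compl{X}}$, or algebraically $\pproj{X}(\identity - \pproj{X}) = \pproj{X} - \pproj{X}^2 = 0$.

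For item 2, $X \psubseteq Y$ means $\img \pproj{X} \subseteq \img \pproj{Y}$. By item 1, $\img \pproj{Y} \bot \img \pproj{\compl{Y}}$, so the subspace $\img \pproj{X}$ is also orthogonal to $\img \pproj{\compl{Y}}$.

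For item 3, suppose $\img \pproj{X} \bot \img \pproj{Y}$ and $\img \pproj{X} \bot \img \pproj{Z}$. Then $\img \pproj{Y} \cup \img \pproj{Z} \subseteq (\img \pproj{X})^\bot$. Since $(\img \pproj{X})^\bot$ is a linear subspace, it also contains $\lspan(\img \pproj{Y} \cup \img \pproj{Z})$, and by \cref{def:ops} this span is $\img \pproj{Y \cup Z}$. Hence $X$ and $Y \cup Z$ are disjoint.

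There is no real obstacle in this argument. The only point needing care is to make sure we work consistently with the images of the projectors $\pproj{\,\cdot\,}$ on $H$, in the sense of the identification made before \cref{def:ops}, rather than with the module maps $\mmap{\,\cdot\,}$. Since all definitions in this subsection are phrased in terms of $\img \pproj{\,\cdot\,}$, this holds automatically.
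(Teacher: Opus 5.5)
Your proposal is correct and is the direct verification from the definitions that the paper has in mind when it calls these statements easy to verify. Each item reduces, via the definitions phrased in terms of $\img \pproj{\,\cdot\,}$, to an elementary fact about orthogonal complements, inclusions and spans of subspaces of $H$. Your item 3 argument, that $(\img \pproj{X})^\bot$ is a subspace and hence closed under spans, is exactly the reason the paper uses orthogonality rather than empty intersection as its notion of disjointness.
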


The final operation on projective subsets we will consider is that of the cartesian product. Let $X$ be quantum set and $Y, Z \psubseteq X$. An intuitive choice would be to define $Y \times Z$ as the projective subset of $X \otimes X$ given by the projection $\pproj{Y} \otimes \pproj{Z}$. However, as we have mentioned in \cref{ssec:anyothername}, classical sets of the form $Y \times Z$ can be seen as binary relations. Similarly, we would like that the cartesian product of projective subsets is similarly a binary relation as defined by Musto, Reutter, and Verdon. These are not projective subsets of $X \otimes X$, but of $X\op \otimes X$. This motivates the following definition.

\begin{definition}\label{def:cartprod}
  Let $X, X'$ be quantum sets and $Y \psubseteq X, Z \psubseteq X'$. The \emph{cartesian product} $Y \times Z$ is the projective subset of $X\op \otimes X'$ given by the projection $\pproj{Y}\transpose \otimes \pproj{Z} = \overline{\pproj{Y}} \otimes \pproj{Z}$.
\end{definition}

It is also easily seen in the graphical calculus that taking the conjugate on the left side recovers Weavers bimodules over the commutant. Recall that taking the conjugate corresponds to mirroring the diagram across a vertical axis. $\pproj{Y \times Z}$ is thus given by
\begin{equation*}
  \input{diagrams/y-cprod-z.tikz}\enspace.
\end{equation*}
  Note that under these definitions we have $E(G) \psubseteq V(G) \times V(G)$, exactly the same as for classical graphs!

\subsection{Projective Subsets and Functions}

Before we move on to studying quantum graph properties, we take a brief detour through the interplay between classical functions and projective subsets. It is a foundational concept of discrete mathematics how sets interact with functions. For example, functions have \emph{images}, which are subsets of their codomain, and they can be \emph{restricted} to subsets of their domain. These two notions are readily generalised to ordinary subsets of quantum sets.

\begin{definition}
  Let $f\colon X \to Y$ be a classical function between quantum sets. The \emph{image} of $f$, denoted $\img f$ is the image of the underlying linear map of $f$.
\end{definition}

\begin{definition}\label{def:cfuncrestr}
  Let $X, Y, Z$ be quantum sets such that $Y \subseteq X$ and let $f\colon X \to Z$ be a classical function. The \emph{restriction} of $f$ to $Y$, denoted $f\restrict{Y}$ is defined as the restriction of the underlying linear map of $f$.
\end{definition}

These definitions are well-behaved in the sense that the following two results hold, which we state here without proof. They follow from an algebraic argument, noting that the adjoint of a classical function is a $*$-homomorphism between direct sums of simple algebras. 

\begin{proposition}\label{prop:cfuncimgissset}
  Let $X, Y$ be quantum sets and $f \colon X \to Y$ a classical function. Then $\img f$ is a quantum set, and $\img f \subseteq Y$.
\end{proposition}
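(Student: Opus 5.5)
The plan is to pass to the adjoint and work in \Cstar-algebraic terms. By \cref{def:cfunc}, $f$ is comultiplicative, counital and real. Taking adjoints, $f\adjoint\colon Y \to X$ is multiplicative ($f\adjoint \circ m_Y = m_X \circ (f\adjoint \otimes f\adjoint)$) and unital ($f\adjoint u_Y = u_X$). Realness of $f$ gives that $f\adjoint$ commutes with the conjugate functor, which by \cref{prop:cstarssfmequiv} is the involution. So $\phi \coloneqq f\adjoint$ is a unital $*$-homomorphism $Y \to X$ of finite-dimensional \Cstar-algebras. Since $\img f = (\ker f\adjoint)^\bot = (\ker \phi)^\bot$, the task reduces to two facts: $(\ker\phi)^\bot$ is a \Cstar-quotient of $Y$ via an identification compatible with the $\dagger$-SSFM structure, and this structure is the one inherited from $Y$.

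First I decompose $Y = \bigoplus_{i=1}^k M_{n_i}$ using \cref{prop:ssfmstructure}. The kernel $\ker\phi$ is a two-sided $*$-ideal. Each $M_{n_i}$ is simple, so every two-sided ideal of $Y$ is a direct sum of full summands: $\ker\phi = \bigoplus_{i \notin S} M_{n_i}$ for some $S \subseteq [k]$. The summands of $Y$ are mutually orthogonal in the Hilbert space structure coming from $u\adjoint \circ m$, which in terms of \cref{prop:ssfmstructure} is a rescaled trace form computed summandwise. Hence
\[
\img f = (\ker\phi)^\bot = \bigoplus_{i\in S} M_{n_i}.
\]

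Second, I check that this subspace, with the restricted multiplication, unit and counit of \cref{prop:ssfmstructure} (the unit being the projection onto the summands in $S$, suitably rescaled), is itself a $\dagger$-SSFM. This holds because it is literally of the form in \cref{prop:ssfmstructure} with index set $S$. The corresponding quotient map $Y \to \bigoplus_{i\in S} M_{n_i}$ is the summand projection, which is a surjective $*$-homomorphism. So $\img f$ is a \Cstar-quotient of $Y$, that is, $\img f \subseteq Y$ in the sense of the paper's definition of subsets.

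The main obstacle is the normalisation bookkeeping in the first step: verifying that the Frobenius conventions make $f\adjoint$ an honest unital $*$-homomorphism with respect to the \Cstar-structure of \cref{prop:cstarssfmequiv}, rather than a rescaled one. The scalars $1/\sqrt{n_i}$ in the multiplication are the delicate point. I would handle this summandwise. The scaling affects neither which ideals exist nor the orthogonality of the summands, so the kernel and image computations go through unchanged. If the scalars cause trouble, I would work directly with the linear-algebraic facts instead: $\ker f\adjoint$ is a subspace closed under $m_Y$ with arbitrary elements on either side, which by simplicity forces it to be a union of summands.
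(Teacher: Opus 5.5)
Your proposal is correct and is essentially the argument the paper has in mind. The paper states this result without proof, remarking only that it follows because $f^\dagger$ is a $*$-homomorphism between direct sums of simple algebras; you fill this in by showing that $\ker f^\dagger$ is a sum of full blocks, so that $\img f=(\ker f^\dagger)^\perp$ is the complementary sum of blocks and hence a quotient of $Y$ via the summand projection. Your normalisation worry is harmless: the blockwise scalars $1/\sqrt{n_i}$ change neither the two-sided ideals nor the orthogonality of the summands, and in fact only multiplicativity of $f^\dagger$ is needed for the kernel step.
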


\begin{proposition}
  Let $X, Y, Z$ be quantum sets such that $Y \subseteq X$ and let $f\colon X \to Z$ be a classical function. Then $f\restrict{Y}$ is a classical function $Y \to Z$.
\end{proposition}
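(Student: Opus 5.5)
The plan is to pass to adjoints, where the three conditions of \cref{def:cfunc} become algebraic. Taking the $\dagger$ of each diagram, a linear map $f\colon X \to Z$ is a classical function exactly when $f\adjoint\colon Z \to X$ preserves multiplication (the dagger of the comultiplication condition), preserves the unit (the dagger of the counit condition), and commutes with conjugation. By \cref{prop:cstarssfmequiv}, conjugation is the \Cstar-involution, so the last condition says $f\adjoint$ preserves $*$. In short, $f$ is a classical function if and only if $f\adjoint$ is a unital $*$-homomorphism of the corresponding \Cstar-algebras. I will use this reformulation throughout.

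Next I make the restriction explicit. By the discussion following the definition of subsets, $Y \subseteq X$ means that, writing $X = \bigoplus_{i=1}^k M_{n_i}$ as in \cref{prop:ssfmstructure}, we have $Y = \bigoplus_{i \in S} M_{n_i}$ for some $S \subseteq [k]$. Then $Y$ sits inside $X$ via the isometric inclusion $\iota\colon Y \to X$ of these summands, and $f\restrict{Y} = f \circ \iota$. Its adjoint is $\iota\adjoint \circ f\adjoint$, where $\iota\adjoint = \pi\colon X \to Y$ is the orthogonal projection onto the summands indexed by $S$. Since a composite of unital $*$-homomorphisms is again one, it suffices to show that $\pi$ is a unital $*$-homomorphism between the $\dagger$-SSFMs $X$ and $Y$. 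Equivalently, $\iota$ is itself a classical function, and classical functions are closed under composition (the defining diagrams simply stack).

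The only real work is this check on $\pi$, and the main point of care is the normalisation in \cref{prop:ssfmstructure}. Multiplication on $X$ is summand-wise $a_i b_i/\sqrt{n_i}$, and on $Y$ it uses exactly the same factors for $i \in S$. Hence $\pi(m_X(a \otimes b)) = m_Y(\pi a \otimes \pi b)$, because both sides are computed summand by summand. The unit $u_X = \bigoplus_i \sqrt{n_i}\,\identity_{n_i}$ projects to $\bigoplus_{i \in S} \sqrt{n_i}\,\identity_{n_i} = u_Y$. The involution, being summand-wise conjugate transposition, clearly commutes with $\pi$.

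One subtlety I would verify is that the $\dagger$-SSFM structure on the quotient $Y$ really is the restricted one with these same normalisation constants. This holds because the structure of \cref{prop:ssfmstructure} on a direct sum is determined summand-wise by the sizes $n_i$, and those sizes are unchanged in passing to $Y$. With this settled, $\pi$ is a unital $*$-homomorphism, hence $\iota$ is a classical function, and $f\restrict{Y} = f \circ \iota$ is a classical function $Y \to Z$.
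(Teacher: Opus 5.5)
Your argument is correct and follows the route the paper has in mind. The paper states this result without proof, remarking only that it follows algebraically from the adjoint of a classical function being a $*$-homomorphism, and your factorisation $f\restrict{Y} = f \circ \iota$, with $\iota\adjoint$ the projection onto the summands indexed by $S$ (a unital $*$-homomorphism for the normalisations of \cref{prop:ssfmstructure}), fills in that sketch; your check that $Y$ carries the restricted structure also holds, since for a special $\dagger$-Frobenius monoid the counit is $a \mapsto \Tr L_a$, so the inner product is determined by the $*$-algebra structure alone.
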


Coveniently, these definitions may be refined to projective subsets. Some care is necessary when defining the restriction of a classical function to a projective subset, since classical functions are defined as morphisms between $\dagger$-SSFMs. Instead we define the restriction as a suitable equivalence class of classical functions. To improve readability, we denote the underlying linear map of a classical function $f$ by $\ucl{f}$.

\begin{definition}
  Let $X, Y, Z$ be quantum sets such that $Y \psubseteq X$ and let $f\colon X \to Z$ be a classical function. The \emph{restriction} of $f$ to $Y$, denoted $f\restrict{Y}$ is given by the set of classical functions $\{f' \colon Y' \to Z \mid \uclarg{f'}(\ideal{Y}) = \uclarg{f}(\ideal{Y}) \}$, where $Y' \subseteq X$ is minimal such that $Y \psubseteq Y'$. Its \emph{image} is given by $\img f\restrict{Y} = \uclarg{f}(\ideal{Y})$.
\end{definition}

We also write $f(Y)$ for $\img f\restrict{Y}$, mirroring the classical notation. The definition extends \cref{def:cfuncrestr} in the following sense: If $Y$ is a quantum set and hence $Y \subseteq X$, then $Y' = Y$ and $f\restrict{Y}$ is the singleton set of containing only $\ucl{f}\restrict{Y}$.
It is an immediate consequence of \cref{prop:cfuncimgissset,prop:ssetsarepsets} that the image of a classical function is a projective subset. The following lemma implies that the same holds for the image of a restriction.

\begin{lemma}
  Let $X, Y$ be quantum sets, $X' \psubseteq X$, and $f\colon X \to Y$ a classical function. Then $\uclarg{f}(\ideal{X'})$ is a right-ideal in $Y$.
  \begin{proof}
    We show this graphically. Let $x \in X'$ and $y \in Y$ be arbitrary. Then we have
    \begin{equation*}
      \input{diagrams/f-img-ideal.tikz}\enspace,
    \end{equation*}
    where $x' \in X'$.
  \end{proof}
\end{lemma}

\begin{corollary}\label{cor:psetpushforward}
  Let $X, Y, Z$ be quantum sets such that $Y \psubseteq X$ and let $f\colon X \to Z$ be a classical function. Then $f(Y) \psubseteq Z$.
\end{corollary}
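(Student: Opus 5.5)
The corollary should follow directly from the preceding lemma together with the correspondence between projective subsets and right-ideals. By definition, $f(Y) = \img f\restrict{Y} = \uclarg{f}(\ideal{Y})$. Since $Y \psubseteq X$, its associated right-ideal is $\ideal{Y} = \img \mmap{Y} = \pproj{Y}X$. The lemma, applied with $X' = Y$, then shows that $\uclarg{f}(\ideal{Y})$ is a right-ideal in $Z$.

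To conclude, I invoke the proposition giving the one-to-one correspondence between projective subsets of $Z$ and right-ideals in $Z$. It yields a unique projective subset of $Z$ whose ideal is $\uclarg{f}(\ideal{Y})$, and we identify $f(Y)$ with it. Concretely, since $Z$ is a finite-dimensional \Cstar-algebra, the right-ideal has the form $qZ$ for a projection $q \in Z$. By \cref{lem:porpionbij}, left multiplication $L_q$ is then a projector satisfying the module condition, and its image is exactly $f(Y)$.

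The only step needing care is the fact used in that proposition that every right-ideal in a finite-dimensional \Cstar-algebra is generated by a projection. This is standard: a right-ideal $I$ is a subspace closed under right multiplication, so $I^* I \subseteq I$ contains positive elements. One takes $q$ to be the supremum of the range projections of elements of $I$; finite dimensionality makes this a finite supremum lying in $I$, and then $I = qZ$. Since the paper asserts this as well known, I would just cite it, and the corollary follows.
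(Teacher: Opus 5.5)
Your argument is correct and is the paper's own: the corollary follows by combining the preceding lemma, which says that $\ucl{f}$ sends the right-ideal of $Y$ to a right-ideal of $Z$, with the one-to-one correspondence between projective subsets and right-ideals. One small slip in your optional justification of the cited fact is that a right-ideal satisfies $II^* \subseteq I$, not $I^*I \subseteq I$; it is $aa^* \in I$, whose support projection is the range projection of $a$, that places the range projections inside $I$.
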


We conclude this excursion on classical functions by defining their cartesian product. Classically, if $f \colon X \to A$ and $g \colon Y \to B$ are functions, then $f \times g \colon X \times Y \to A \times B$ is defined by $(f \times g)(v, x) = (f(v), g(v))$. We generalise this definition in the intuitive way.

\begin{definition}
  Let $A, B, X, Y$ be quantum sets and $f\colon X \to A$, $g\colon Y \to B$ be classical functions. Then their \emph{cartesian product} $f\times g$ is given by the tensor product of their underlying linear maps, $\ucl{(f\times g)} = \ucl{f} \otimes \ucl{g}$.
\end{definition}

It is not hard to verify that the resulting linear map is a classical function $X \otimes Y \to A \otimes B$. It is also a classical function $X\op \otimes Y \to A\op \otimes B$ and hence under \cref{def:cartprod} we may write $f\times g\colon X \times Y \to A \times B$. It follows that this definition is nicely compatible with our characterisation of binary relations between quantum sets $X, Y$ as projective subsets of $X \times Y$. For example, just as one would do classically, we may define the \emph{pushforward} of a binary relation $R \psubseteq X \times Y$ along $f\times g$ as $(f \times g)(R)$, which by \cref{cor:psetpushforward} is a binary relation between $A$ and $B$.

\section{Quantum Graph Properties via Projective Subsets}\label{sec:props}
Having established some fundamental properties of projective subsets, we now want to turn our attention to quantum graphs and their properties. Concretely, we are mainly interested in connected components, colourings, cliques, and independent sets. We will investigate these concepts individually; first introducing a definition in terms of projective subsets, and then comparing it with existing definitions in the literature. However, before we look at these concrete properties let us first phrase some fundamental properties of quantum graphs in terms of projective subsets.

\subsection{Basic Properties}

Recall that we generally distinguish between \emph{loopless} quantum graphs and those with loops; a special case of the latter being quantum graphs with \emph{loops at every vertex}. Concretely, a quantum graph $G$ with adjacency operator $A_G$ is said to be loopless or to have loops at every vertex if respectively
\begin{equation}\label{eq:loopcond2}
  \input{diagrams/adj-op-irreflexive.tikz} \qquad\quad\text{or}\quad\qquad \input{diagrams/adj-op-reflexive.tikz}\enspace.
\end{equation}

For classical graphs, this definition says that the Schur product of the adjacency matrix with the identity is either $0$, in the case of no loops, or the identity, in the case of loops at every vertex. However, for classical graphs there is an equivalent definition in terms of subsets.

\begin{observation}
  Let $G$ be a classical graph. Then $G$ is loopless if $\Delta$ and $E(G)$ are disjoint. It has loops at every vertex if $\Delta \subseteq E(G)$.
\end{observation}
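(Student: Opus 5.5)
This is a statement about classical graphs, so I will argue directly in the linearised picture, where $V(G) = \C^n$ with the standard basis $\ket{1}, \dots, \ket{n}$. The plan is to translate both the Schur-product conditions in \cref{eq:loopcond2} and the subset conditions on $\Delta$ into statements about the diagonal entries $(A_G)_{ii}$, and compare them.

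First I make the classical objects explicit. The diagonal relation $\Delta \subseteq [n] \times [n]$ corresponds, as a projective subset of $V(G) \times V(G)$, to the projector onto $\lspan\{\ket{i} \otimes \ket{i} \mid i \in [n]\}$. By the remark after \cref{def:cartprod}, the conjugate on the left factor is harmless here because standard basis vectors are self-conjugate. The edge relation $E(G)$ is the projector onto $\lspan\{\ket{i} \otimes \ket{j} \mid i \sim j\}$. This is the linearisation discussed in \cref{ssec:anyothername}, and it matches the correspondence of \cref{prop:adjopedgerelequiv} in the classical case.

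Second I interpret the two notions. Both projections are diagonal in the product basis $\{\ket{i} \otimes \ket{j}\}$. Disjointness in the sense of \cref{def:disjointness} therefore says exactly that no basis vector $\ket{i} \otimes \ket{i}$ lies in $\img E(G)$, that is, no vertex $i$ satisfies $i \sim i$. Likewise, $\Delta \psubseteq E(G)$ holds precisely when every $\ket{i} \otimes \ket{i}$ lies in $\img E(G)$, that is, $i \sim i$ for all $i$. Since images of diagonal projectors are spanned by the basis vectors they contain, this reduces both conditions to checking basis vectors.

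Third I compare with \cref{eq:loopcond2}. On $\C^n$ the Schur product of $A_G$ with the identity is the diagonal matrix $\mathrm{diag}((A_G)_{11}, \dots, (A_G)_{nn})$. This diagonal matrix is $0$ exactly when no vertex carries a loop, and it is $\identity$ exactly when every vertex does. This identifies the two formulations.

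The only mildly delicate point is confirming that the diagrammatic conditions in \cref{eq:loopcond2} really compute $A_G \circ \identity$ on $\C^n$. I expect this to follow by evaluating the comultiplication and multiplication on basis vectors, using $\ket{i} \mapsto \ket{i} \otimes \ket{i}$. Everything else is bookkeeping with diagonal projectors.
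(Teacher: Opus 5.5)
Your argument is correct, but it takes a longer route than the statement requires. The observation is about ordinary subsets of $[n]\times[n]$, and the paper gives no proof because it follows directly from the definitions. A loop at $v$ is exactly a pair $(v,v)\in E(G)$, so $E(G)\cap\Delta$ is precisely the set of loops of $G$, and both claims follow at once without any linearisation.

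What your detour buys is the classical, basis-level case of \cref{prop:loopsanddiagrel}. On $\C^n\otimes\C^n$ both $\pproj{\Delta}$ and $\pproj{E(G)}$ are diagonal $0/1$ projectors in the product basis. You use this to show that orthogonality in the sense of \cref{def:disjointness}, and the relation $\psubseteq$, reduce to disjointness and containment of the underlying sets of basis vectors. Your computation of the Schur product of $A_G$ with $\identity$ then ties these to \cref{eq:loopcond2}.

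The paper obtains the general quantum statement differently. It rewrites the loop conditions in terms of $E(G)$, passes to the double-wire calculus and bends down the outer wires. That argument works for arbitrary $V(G)$, where no diagonal basis is available.

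One small point: the remark after \cref{def:cartprod} is not what you need here, since $\Delta$ is not a cartesian product. What makes your identification work is that $\C^n$ is commutative, so $(\C^n)\op=\C^n$. Moreover, on a commutative quantum set such as $\C^m$, the maps $\mmap{Y}$ and $\pproj{Y}$ are the same diagonal projector.
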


The $\Delta$ here denotes the \emph{diagonal relation} on $V(G)$, defined as $\Delta \coloneqq \{(v, v) \mid v \in V(G)\} \subseteq V(G) \times V(G)$. It turns out that this alternative definition of the existence of loops can be generalised to the quantum case. To that end, we need an analogue of the diagonal relation on a quantum set. Fortunately, multiple equivalent definitions of the diagonal relation have been proposed \cite{weaver_quantum_2021,kornell_discrete_2023}. Recall that a binary relation on a quantum set $X$ in the sense of Weaver is a bimodule over the commutant $X'$. Weaver then defines the diagonal relation as $X'$ itself \cite[Definition 4.2]{weaver_quantum_2021}. We know that the commutant of a quantum set $X$ is given by the direct sum of multiples of the identity, so we may express the corresponding projector graphically as 
\begin{equation*}
  \input{diagrams/delta-fullproj.tikz} 
\end{equation*}
We can again view $B(H)$ as $H^* \otimes H$, which allows us to write it as
\begin{equation*}
  \input{diagrams/delta-proj-doublewire.tikz}
\end{equation*}
Composing with $H$ and $H^*$ wires to obtain $\mmap{\Delta}$ yields
\begin{equation*}
  \input{diagrams/delta-proj-bimod.tikz}.
\end{equation*}
This is exactly the \emph{Frobenius map}, which has also been called the \emph{sharing} map \cite{goldberg_quantum_2026}. Note that for classical sets $X = \C^n$, this is precisely the projector onto $\lspan \{\ket{i} \otimes \ket{i} \mid i \in [n]\}$. 

Now recall that the loop conditions of \cref{eq:loopcond2} can equivalently be phrased in terms of the edge relation $E(G)$. A quantum graph has no loops or loops at every vertex if respectively
\begin{equation*}  
  \input{diagrams/edgerel-irreflexive.tikz} \qquad\quad\text{or}\qquad\quad\enspace \input{diagrams/edgerel-reflexive.tikz}\enspace.
\end{equation*}
From our observations in \cref{ssec:anyothername}, we know that these conditions are really of the form
\begin{equation*}
  \input{diagrams/edgerel-irreflexive-doublewire.tikz}  \qquad\quad\text{and}\qquad\quad\enspace \input{diagrams/edgerel-reflexive-doublewire.tikz}\enspace.
\end{equation*}
We can bend down the outer wires to obtain the equivalent conditions
\begin{equation*}
  \input{diagrams/edgerel-irreflexive-doublewire-prec.tikz} \qquad\quad\text{and}\qquad\quad\enspace \input{diagrams/edgerel-reflexive-doublewire-prec.tikz}\enspace.
\end{equation*}
But the first condition just says that $\img \pproj{E(G)}\, \bot \img \pproj{\Delta}$, while the second condition says that $\img \pproj{\Delta} \subseteq \img \pproj{E(G)}$. These are precisely our definitions of disjointness and subset containment of projective subsets. We thus obtain the following statement.

\begin{proposition}\label{prop:loopsanddiagrel}
  A quantum graph $G$ is loopless if and only if $E(G)$ and $\Delta$ are disjoint. It has loops at every vertex if $\Delta \psubseteq E(G)$.
\end{proposition}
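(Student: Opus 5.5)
The plan is to make the diagrammatic argument preceding the statement precise, in three steps: identify $E(G)$ and $\Delta$ as projective subsets, translate the loop conditions into statements about their projections, and read off disjointness and containment.

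\emph{Step 1.} By the discussion in \cref{ssec:anyothername}, $E(G)$ is a binary relation on $V(G)$, hence a projective subset of $V(G)\op \otimes V(G)$. Via \cref{lem:porpionbij} and the Weaver-type correspondence, $\mmap{E(G)}$ corresponds to a projection $\pproj{E(G)}$ acting on $B(H) \cong H^* \otimes H$ as an $X'$-bimodule projector $P_{E}$. Likewise, $\mmap{\Delta}$ is the Frobenius map, corresponding to the projector $P_\Delta$ onto $X' \subseteq B(H)$. Both correspondences are bijective and are given by the same ``composing with $H$ and $H^*$ wires'' construction. In particular, bending wires is an isomorphism of diagrams and commutes with composition of the relevant $2$-morphisms.

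\emph{Step 2.} Take the loop conditions in their edge-relation form, namely that composing $E(G)$ with the Frobenius projector yields $0$ (loopless) or the Frobenius projector itself (loops everywhere). These are equivalent to the adjacency-operator form by \cref{prop:adjopedgerelequiv}. Writing both $E(G)$ and $\mmap{\Delta}$ in the double-wire picture, the composite becomes the horizontal composite of $H$, $P_E \circ P_\Delta$ and $H^*$. The key point is that the map $Q \mapsto H \otimes_{\C^k} Q \otimes_{\C^k} H^*$ from $2$-morphisms on $B(H)$ to maps on $X \otimes X$ is injective. This holds because it has the explicit inverse given by the trace-type construction (plugging in duality morphisms of $H$) used earlier to recover Weaver relations. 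Applying that inverse, $\mmap{E(G)}\mmap{\Delta} = 0$ is equivalent to $P_E P_\Delta = 0$, and $\mmap{E(G)}\mmap{\Delta} = \mmap{\Delta}$ is equivalent to $P_E P_\Delta = P_\Delta$.

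\emph{Step 3.} For orthogonal projectors, $P_E P_\Delta = 0$ holds if and only if $\img P_\Delta \perp \img P_E$, which by \cref{def:disjointness} is disjointness of $E(G)$ and $\Delta$. Similarly, $P_E P_\Delta = P_\Delta$ holds if and only if $\img P_\Delta \subseteq \img P_E$, i.e.\ $\Delta \psubseteq E(G)$. One must check that the images here are those of the projections $\pproj{\cdot}$ used in the definitions. This follows from the observation, made after \cref{def:projsubsetweaver}, that $P_Y$ is $\pproj{Y}$ up to realignment, applied to the relation $X\op\otimes X$.

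The main obstacle is Step 2. One has to justify that the inverse construction respects composition (or at least sends zero to zero and $\mmap{\Delta}$ to $P_\Delta$), so that the equations transfer in both directions. I would handle this by noting that the inverse inverts the forward map, which plainly preserves composition. Hence $P_E P_\Delta$ maps to $\mmap{E(G)}\mmap{\Delta}$, and injectivity then gives the equivalences. Alternatively, one can argue directly with the projections $\pproj{E(G)}, \pproj{\Delta} \in X\op \otimes X$: the left-multiplication map $p \mapsto L_p$ of \cref{lem:porpionbij} is an injective algebra homomorphism, so $\mmap{E}\mmap{\Delta} = L_{\pproj{E}\pproj{\Delta}}$. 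This reduces everything to products of projections in a \Cstar-algebra, where orthogonality and containment of ranges are the stated conditions.
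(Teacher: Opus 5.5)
Your proposal is correct and follows the paper's own argument. You express the edge-relation forms of the loop conditions ($\mmap{E(G)}$ composed with the Frobenius map $\mmap{\Delta}$ equals $0$, resp.\ $\mmap{\Delta}$) in the double-wire calculus, strip off the outer $H$ and $H^*$ wires to get $\pproj{E(G)}\pproj{\Delta}=0$, resp.\ $\pproj{E(G)}\pproj{\Delta}=\pproj{\Delta}$, and read these off as orthogonality, resp.\ containment, of images. The paper does the same, except that where it simply says ``bend down the outer wires'', you justify that step explicitly: whiskering by $H$ and $H^*$ is functorial and injective, with the trace-type construction as a left inverse.
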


In fact, Weaver requires precisely that $\img \pproj{\Delta} \subseteq \img \pproj{E(G)}$ in his definition of quantum graphs \cite[Definitions 4.2 and 4.5]{weaver_quantum_2021}. Musto, Reutter, and Verdon then proved that this property corresponds precisely to having loops at every vertex \cite[Theorem VII.7]{musto_compositional_2018}, which yields an alternative proof of the second part of \cref{prop:loopsanddiagrel}. The result also allows us to add and remove loops analogously to the classical case.

\begin{corollary}\label{cor:loopaddremove}
  Let $G$ be a quantum graph potentially containing loops. Then 
  \begin{itemize}
    \item $\loops{G}$ with $E(\loops{G}) = E(G) \cup \Delta$ is a quantum graph with loops at every vertex.
    \item $\looprm{G}$ with $E(\looprm{G}) = E(G) \setminus \Delta$ is a loopless quantum graph.
  \end{itemize}
\end{corollary}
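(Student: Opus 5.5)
The plan is to reduce both items to the lattice operations of \cref{def:ops}, the observation following \cref{def:disjointness}, and \cref{prop:loopsanddiagrel}. I read $E(G) \setminus \Delta$ as $E(G) \cap \compl{\Delta}$, the only reading available from the operations defined so far. Both $E(G)$ and $\Delta$ are projective subsets of $V(G)\op \otimes V(G)$, by the discussion in \cref{ssec:anyothername} and the construction of $\mmap{\Delta}$ as the Frobenius map. Hence $E(G) \cup \Delta$ and $E(G) \cap \compl{\Delta}$ are again projective subsets of $V(G)\op \otimes V(G)$: the text after \cref{def:ops} shows, via \cref{lem:modproj}, that unions, intersections and complements preserve the right-module property over the commutant. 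By the correspondence between projective subsets of $X\op \otimes Y$ and binary relations (\cref{def:qrel}), the associated projectors satisfy the bimodule condition. They are therefore edge relations of quantum graphs on $V(G)$.

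The loop statements then follow from containment and disjointness. For $\loops{G}$, the union is an upper bound, so $\img \pproj{\Delta} \subseteq \lspan(\img \pproj{E(G)} \cup \img \pproj{\Delta})$, that is $\Delta \psubseteq E(\loops{G})$. The second part of \cref{prop:loopsanddiagrel} then gives loops at every vertex; only the ``if'' direction is stated there, and it is all we need. For $\looprm{G}$, we have $E(G) \cap \compl{\Delta} \psubseteq \compl{\Delta}$. By item 2 of the observation applied with $Y = \compl{\Delta}$, whose complement is $\Delta$, the subset $E(\looprm{G})$ is disjoint from $\Delta$. The first part of \cref{prop:loopsanddiagrel} then says $\looprm{G}$ is loopless.

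The remaining point is the standing convention that quantum graphs are undirected. I would show that undirectedness of $G$ passes to $\loops{G}$ and $\looprm{G}$. The undirectedness condition on $E(G)$ says that $E(G)$ commutes with (equivalently, is fixed under conjugation by) an invertible map $S$ on $V(G) \otimes V(G)$ built from the swap and the self-duality of the Frobenius structure. Translated to the projection picture, $S$ maps $\img \pproj{E(G)}$ onto itself. The diagonal $\Delta$, being the Frobenius map, is manifestly symmetric, so $S$ also preserves $\img \pproj{\Delta}$.

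The main obstacle is checking that $S$ acts on the image spaces as a unitary or antiunitary map. Once that holds, $S$ preserves orthogonal complements, intersections and spans of unions. It therefore fixes $\img \pproj{E(G) \cup \Delta}$ and $\img \pproj{E(G) \cap \compl{\Delta}}$, which gives undirectedness of both graphs. I expect this check to follow by rotating the diagram of the undirectedness condition into the double-wire form used in \cref{ssec:anyothername}, where the operation becomes a swap of the two outer $H$-wires composed with conjugation.
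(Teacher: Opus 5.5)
Your proposal is correct and follows the paper's route. The paper obtains the corollary directly from \cref{prop:loopsanddiagrel}, exactly as you do: $\Delta \psubseteq E(G)\cup\Delta$ gives loops at every vertex, and $E(G)\cap\compl{\Delta}$ is disjoint from $\Delta$, which gives looplessness, using that the operations of \cref{def:ops} again yield projective subsets of $V(G)\op\otimes V(G)$. Your extra undirectedness check is something the paper does not carry out, and your ``main obstacle'' resolves immediately: by the paper's remark, undirectedness means $\img\pproj{E(G)}$ is closed under $x\mapsto x^*$, which is antiunitary for $\lrangle{x,y}=\Tr x^*y$, and $\img\pproj{\Delta}=V(G)'$ is a $*$-algebra, so both $\img\pproj{E(G)}+V(G)'$ and $\img\pproj{E(G)}\cap (V(G)')^\bot$ are again closed under adjoints.
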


If $G$ was loopless, then this construction corresponds precisely to adding the identity to the adjacency operator. Conversely, if $G$ had loops at every vertex then the construction amounts to subtracting the identity from the adjacency operator. The proof is a straightforward diagrammatic derivation using that in these cases it holds that $\mmap{E(G) \cup \Delta} = \mmap{E(G)} + \mmap{\Delta}$ and $\mmap{E(G) \setminus \Delta} = \mmap{E(G)} - \mmap{\Delta}$. We omit it here.

This treatment of loops also allows us to nicely phrase the concept of \emph{graph complements}. It is tempting to simply define the complement of a quantum graph $G$ as the quantum graph $\compl{G}$ on the same vertex set with $E(\compl{G}) = \compl{E(G)}$. However, if $G$ was loopless then the resulting graph will have loops at every vertex. In classical graph theory, one usually considers complements without loops, so we will follow this convention.

\begin{definition}
  Let $G$ be a quantum graph. The \emph{complement} $\compl{G}$ of $G$ is the quantum graph given by $V(\compl{G}) = V(G)$ and $E(\compl{G}) = \compl{E(G)} \setminus \Delta$.
\end{definition}

One may verify that the complement operation is an involution on loopless quantum graphs, that is it satisfies $\compl{(\compl{G})} = G$.

\subsection{Connected Components}\label{ssec:conncomp}
Let us now see how we can use projective subsets to express more complex quantum graph structure.
A \emph{connected component} of a classical graph $G$ is a subset of the vertex set that has no edges leaving it. A decomposition of $G$ into connected components is then a partition of the vertex set in which every block is a connected component, that is there exist no edges between vertices belonging to different blocks. This notion has a straightforward formalisation. Given two blocks $X_1, X_2 \subseteq V(G)$ of the partition, we can form the complete bipartite graph $G_{12}$ with vertex set $X_1 \sqcup X_2$. This is precisely the graph that contains all possible edges between $X_1$ and $X_2$. The partition is thus a valid decomposition into connected components if and only if $E(G)$ and $E(G_{12})$ are disjoint for all choices of $X_1 \neq X_2$.

We almost have everything we need to generalise this definition to the quantum case. The edge relation of the classical complete bipartite graph with vertex set $X_1 \sqcup X_2$ is given by $X_1 \times X_2$, and we have defined the cartesian product of projective subsets in \cref{def:cartprod}. Similarly, we have defined what it means for projective subsets to be disjoint in \cref{def:disjointness}. The only missing definition is that of a partition of a quantum set, which we define completely analogously to the classical case.

\begin{definition}
  Let $X$ be a quantum set. A \emph{partition} of $X$ is a collection of mutually disjoint projective subsets $P_1, \dots, P_k \psubseteq X$, such that $\bigcup_s P_s = X$.
\end{definition}

We can now state our definition of decompositions of a quantum graph into connected components.

\begin{definition}\label{def:conncomps}
  A quantum graph $G$ admits a decomposition into $k$ \emph{connected components} if there exists a partition of $V(G)$ into projective subsets $P_1, \dots, P_k$, such that $E(G)$ and $P_s \times P_t$ are disjoint for all $s \neq t \in [k]$.
\end{definition}

Definitions for connected components of quantum graphs in the literature have mostly been restricted to the binary question of whether a quantum graph is \emph{connected} \cite{dominguez_connectivity_2019,matsuda_algebraic_2024,courtney_connectivity_2025}. It is easy to see that a classical graph is connected if and only if there exists no nontrivial connected component. \cref{def:conncomps} is thus easily adapted to this simplified case.

\begin{definition}\label{def:connected}
  A quantum graph $G$ is \emph{disconnected} if there exists a non-trivial projective subset $P \psubseteq V(G)$ such that $E(G)$ and $P \times \compl{P}$ are disjoint. It is called \emph{connected} if it is not disconnected.
\end{definition}

Let us now contrast our definition with those found in the literature. The first definition of connectedness that we will consider is of a similar graph-theoretic flavor as ours. The idea is to define connectedness through \emph{graph homomorphisms}. Classically, a graph homomorphism $f \colon G \to H$ is a mapping from $V(G)$ to $V(H)$ that preserves vertex adjacency: If $v, w \in V(G)$ with $v \sim w$, then also $f(v) \sim f(w)$. Observe that as a consequence, graph homomorphisms also have to preserve connectedness. Indeed, if $v$ and $w$ are connected by a path $v = u_1, u_2, \dots, u_k = w$ in $G$, then $f(u_1), \dots, f(u_k)$ must be a path in $H$ (potentially repeating vertices). It follows that a classical graph $G$ is disconnected if and only if there exists a surjective homomorphism into $K^{c, \circ}_2$, the disjoint union of two vertices with self-loops. The surjectivity requirement forces the existence of $v, w \in V(G)$ that get mapped to different vertices in $K^{c, \circ}_2$. By our observations, $v$ and $w$ then cannot be connected by a path in $G$. There is a notion of graph homomorphisms between quantum graphs, which allows to take this classical result and use it to define connectedness for quantum graphs. This approach was taken by Matsuda \cite{matsuda_algebraic_2024}.

\begin{definition}[cf. Proposition V.3 in \cite{musto_compositional_2018}]\label{def:qghom}
Let $G$ and $H$ be quantum graphs. A \emph{graph homomorphism} $G \to H$ is a classical function $\varphi \colon V(G) \to V(H)$ satisfying 
  \begin{equation*}
    \input{diagrams/graph-homomorphism.tikz}\enspace.
  \end{equation*}
\end{definition}

Observe that this definition says nothing but $(\varphi \times \varphi)(E(G)) \psubseteq E(H)$, which is precisely how one may define homomorphisms $G \to H$ between classical graphs. 

\begin{definition}
  A quantum graph $G$ is \emph{connected in the sense of Matsuda} if there does not exist a surjective graph homomorphism $G \to K^{c, \circ}_2$.
\end{definition}

Courtney, Ganesan, and Wasilewksi \cite{courtney_connectivity_2025} define a quantum graph to be disconnected if there exists a non-trivial projection $p \in V(G)$ such that $L_p A_G = A_G L_p$. First note that this is equivalent to requiring $L_p A_G L_{p^\bot} = 0$. Indeed, multiplying the former statement from the left with $L_p$ and from the right with $L_{p^\bot}$ yields $L_p L_p A_G L_{p^\bot} = A_G L_p L_{p^\bot}$ and thus $L_p A_G L_{p^\bot} = 0$. Conversely, we have $L_{p^\bot} = \identity - L_p$, so the latter statement is equivalent to $L_p A_G - L_p A_G L_p = 0$. Since for undirected quantum graphs $A_G$ is self-adjoint, we then get 
\begin{equation*}
  L_p A_G = L_p A_G L_p = (L_p A_G L_p)\adjoint = (L_p A_G)\adjoint = A_G L_p
\end{equation*}
as desired. 

\begin{definition}\label{def:cgwconnected}
  A quantum graph $G$ with adjacency operator $A_G$ is \emph{CGW connected} if there does not exist a non-trivial projection $p \in V(G)$ such that $L_p A_G L_{p^\bot} = 0$.
\end{definition}

This definition is motivated by the fact that if $G$ is a classical disconnected graph then its adjacency matrix can be taken to be block diagonal. Since $p$ is a projection, $L_p$ is a projector which then projects onto one of these blocks. Sandwiching the adjacency operator by $L_p$ and its orthogonal complement thus extracts an off-diagonal block, which should be zero. This phrasing immediately raises the question why one should not permit arbitrary projections $P \colon V(G) \to V(G)$ instead of only those arising as left-multiplication maps. In \cite{courtney_connectivity_2025}, the authors note that one can always choose $P$ to be the projector onto an eigenspace of $A_G$, in which case the equation $(\identity - P) A_G P = 0$ will always be satisfied. For classical graphs, the eigenspaces of the adjacency matrix are not necessarily the connected components, so if one wishes to recover the original notion in the classical case one has to restrict the choice of projectors somehow. The authors then simply say that the projections in $V(G)$ happen to be a good candidate for this choice. We can now answer \emph{why} they are a good choice: They correspond precisely to subsets of the vertex set!

The two definitions of connectedness we have considered so far are nicely graph theoretical. However, they were preceded by a more operator-algebraic definition, which we mention here because of its surprisingly close connection to projective subsets. We state the definition first and then motivate it.

\begin{definition}[cf. Definition 3.1 in \cite{dominguez_connectivity_2019}]\label{def:cdsconnect}
  Let $G$ be a quantum graph with corresponding operator space $S \subseteq M_n$. Then $G$ is \emph{connected in the sense of Chávez-Domínguez and Swift} if there exists an $m \in N$ such that $(S \oplus \identity)^m = M_n$.
\end{definition}

The idea of viewing a quantum graph as a subspace of $M_n$ stems from the foundational paper by Duan, Severini, and Winter \cite{duan_zeroerror_2013} where they generalise the concept of Shannon's confusability graphs \cite{shannon2003zero} to quantum channels. Concretely, they associate to a quantum channel $\Phi$ with Kraus operators $F_1, \dots, F_k$ the operator space $S_\Phi = \{F_iF_j\adjoint \mid i, j \in [k]\} \subseteq M_n$. They then note that this operator space behaves in some sense like a classical (confusability) graph. This connection to graphs can be made precise: Operator spaces correspond precisely to edge relations of quantum graphs on $M_n$. Indeed, note that for $H = \C^n$ a subspace of $M_n = B(H)$ is nothing but a bimodule of $B(H)$ over the commutant $M_n' = \C\identity$---a binary relation on $M_n$. Note that quantum graphs arising this way from quantum channels always have loops at every vertex. This follows from the trace-preservation condition $\sum_i F_iF_i\adjoint = \identity$. 

Classical graphs fit into the operator space framework by associating to a classical graph $G$ the operator space 
\begin{equation}\label{eq:clgraphopspace}
  S_G \coloneqq \{\ket{i}\bra{j} \mid i, j \in [n], i \sim j \}.
\end{equation}
Defining for operator spaces $S, T \subseteq M_n$
\begin{equation*}
  ST \coloneqq \lspan\{st \mid s \in S, t \in T\}, \qquad\quad S^0 = \C\identity, \qquad\quad S^m = SS^{m-1},
\end{equation*}
we have $(S_G \oplus \identity)^m = M_n$ if and only if for every pair of vertices $i, j \in [n]$ there exists a path from $i$ to $j$ of length at most $m$. The idea is that if $i$ and $j$ are connected by a path through vertices $\ell_1, \dots, \ell_{m-1}$, then $\ket{i}\bra{j} = \ket{i}\bra{\ell_1} \ket{\ell_1} \bra{\ell_2} \dots \ket{\ell_{m-1}}\bra{j} \in S^m$. Adding the identity allows us to pad shorter paths, guaranteeing that $S^{k-1} \subseteq S^k$.\footnote{The authors of \cite{dominguez_connectivity_2019} do not add the identity to the operator space. This is because they assume that quantum graphs have loops at every vertex, which means that the corresponding operator spaces already contain the identity.}

We have seen how we can associate an operator space to a classical graph. It remains to formally define what we mean by ``corresponding operator space'' for arbitrary quantum vertex sets. To that end, note that every subspace of $M_n$ is a bimodule over $X'$ for some quantum set $X$ and thus corresponds to a binary relation on $X$. On the other hand, we have seen in \cref{ssec:anyothername} that given binary relation $R \psubseteq X \times X$, the projector onto the corresponding operator bimodule is given by $\pproj{R}$. We generalise the concept through precisely this correspondence.

\begin{definition}
  Let $G$ be a quantum graph. Its \emph{corresponding operator space} $S$ is given by $S = \img \pproj{E(G)}$.
\end{definition}

Note that this perfectly recovers the representation of classical graphs introduced in \cref{eq:clgraphopspace}. Indeed, by \cref{prop:adjopedgerelequiv}, the edge relation of a classical graph is given by the following projection $\mmap{E(G)} \colon \C^n \otimes \C^n \to \C^n \otimes \C^n$.
\begin{equation*}
  \input{diagrams/edgerel-from-adjop.tikz}
\end{equation*}
To obtain $\pproj{E(G)}$, we write $\mmap{E(G)}$ in the double-wire calculus of $\TFdHilb$ and trace out the outer wires, which yields
\begin{equation*}
  \input{diagrams/edgerel-from-adjop-doublewire.tikz}\enspace.
\end{equation*}
Recall that the composition of the $1$-morphisms $\C^n \colon \C \to \C^n$ and $(\C^n)^* \colon \C^n \to \C$ along $\C^n$ just fuses the corresponding wires into one. The above $2$-morphism is thus equal to
\begin{equation*}
  \input{diagrams/edgerel-from-adjop-doublewire-glued.tikz}\enspace,
\end{equation*}
and it is straightforward to check that it projects onto the operator system $S_G$ of \cref{eq:clgraphopspace}. Note that if $G$ is undirected and $i \sim j$, then $S_G$ contains both $\ket{i}\bra{j}$ and $\ket{j}\bra{i}$. It follows that $S_G$ is closed under adjoints. This is not a coincidence: By a graphical argument that we omit here, one can show that a quantum graph is undirected if and only if its corresponding operator space is closed under adjoints, see Section VII of \cite{musto_compositional_2018} for more details. This matches Weaver's definition of symmetric relations \cite[Definition 4.2]{weaver_quantum_2021}.

\begin{remarkbox}
Observe that the correspondence between quantum graphs and their operator spaces is not bijective. For instance, any operator bimodule over $X'$ may also be seen as an operator bimodule over $M_n'$ and thus as a quantum graph on $M_n$. 
An example of this is the operator system associated to a classical graph. The operator space in \cref{eq:clgraphopspace} is a bimodule over $(\C^n)'$, but it we may also view it as a bimodule over $M_n'$. This yields a quantum graph over $M_n$, which was for example studied in \cite{spitzer_quantum_2026}, where it is called $X_{G, \ccdot}$.
The quantum graphs resulting from this change in perspective are closely related, but will generally have slightly different properties by virtue of the ambient quantum sets admitting different projective subsets. Consider, for example, connectedness. Whether a quantum graph is connected depends on the existence of a projective subset of the vertex set that satisfies a disjointness condition with respect to the edge set $E(G)$. If two quantum graphs have the same operator space, the disjointness condition is the same for both, so the connectedness only depends on which projective subsets are admitted. Viewing a projective subset as a right-module over the commutant of the vertex set, one can see that every projective subset of $X$ is also a projective subset of $M_n$. It is thus strictly harder for the quantum graph on $M_n$ to be connected. Similar results will hold for the other properties we define.
\end{remarkbox}

In \cite{courtney_connectivity_2025}, the authors already proved that all these definitions of connectedness are equivalent. In fact, their list contains even more equivalent statements that are very useful, but do not quite fit the graph-theoretic flavor of our treatment. We show that they are in turn equivalent to our definition in terms of projective subsets. We will need the following lemma, which will remain of central importance for the rest of the paper.

\begin{lemma}\label{lem:disjsandwich}
  Let $G$ be a quantum graph with adjacency operator $A_G$ and let $P, Q \psubseteq V(G)$. Then the following are equivalent.
  \begin{enumerate}
    \item $E(G)$ and $P \times Q$ are disjoint.
    \item $\mmap{P} A_G \mmap{Q} = 0$.
  \end{enumerate}
  \begin{proof}
    We first phrase the disjointness condition in terms of $\mmap{P}$ and $\mmap{Q}$. Per definition, $E(G)$ and $P \times Q$ are disjoint if and only if
    \begin{equation*}
      \input{diagrams/edgerel-disj.tikz}. 
    \end{equation*}
    By composing from the left and right with the suitable 1-morphisms, we obtain an equivalent statement for the module maps:
    \begin{equation}\label{eq:edgreldisjmmap}
      \input{diagrams/edgerel-disj-mmap.tikz}.
    \end{equation}
    Now recall the correspondence between adjacency operators and edge relations of quantum graphs. We have
    \begin{equation*}
      \input{diagrams/adjop-sandwich-equiv.tikz}\enspace.
    \end{equation*}
    The second equation follows from isomorphism of diagrams and the definition of the cap map in terms of multiplication and counit. The third equation follows from the bimodule property of $E(G)$ and unitality. The final equation follows from the left-module property of $\overline{\mmap{P}}$, the bimodule property of $E(G)$ and counitality.
    It is now obvious that if $E(G)$ and $P \times Q$ are disjoint, that is \cref{eq:edgreldisjmmap} holds, then $\mmap{P} A_G \mmap{Q} = 0$. Conversely, if 
    \begin{equation*}
      \input{diagrams/adjop-cond-to-edgerel-cond.tikz}
    \end{equation*}
    then also
    \begin{equation*}
      \input{diagrams/adjop-cond-to-edgerel-cond-2.tikz}.
    \end{equation*}
    But the left-hand side is precisely equal to the left-hand side of \cref{eq:edgreldisjmmap} by the (bi-, left-, right-) module properties of $E(G), \overline{\mmap{P}}, \mmap{Q}$ respectively. This completes the proof.
  \end{proof}
\end{lemma}

We can now extend Courtney, Ganesan, and Wasilewksi's equivalence results by our definition.

\begin{theorem}[cf. Theorem 3.4 in \cite{courtney_connectivity_2025}]\label{thm:connectedequiv}
  Let $G$ be a quantum graph with adjacency operator $A_G$ and corresponding operator space $S$. Then the following are equivalent.
  \begin{enumerate}
    \item $G$ is CGW connected: There exists a non-trivial projection $p \in V(G)$ such that $L_p A_G L_{p^\bot} = 0$.
    \item $G$ is connected in the sense of Chávez-Domínguez and Swift: $(S \oplus \identity)^m \neq M_n$ for all $m \in \N$. 
    \item $G$ is connected in the sense of Matsuda: There exists a surjective graph homomorphism $G \to \compl{K}_2$.
    \item $G$ is connected: There exists a non-trivial projective subset $P \psubseteq V(G)$ such that $E(G)$ and $P \times \compl{P}$ are disjoint.
  \end{enumerate}
  \begin{proof}
    The equivalence of items $1$-$3$ was already proved in \cite{courtney_connectivity_2025}. In their work, 2 was phrased as $\Cstar(S) \neq M_n$. This is easily seen to be equivalent, as the \Cstar-algebra generated by $S$ is precisely given by $\Cstar(S) = \lspan \{ S^m \mid m \geq 0 \}$. We show that $4 \iff 1$. Note that since $p \in V(G)$ is a projection, we have $L_p = \mmap{P}$ for some projective subset $P \psubseteq V(G)$. The condition $L_p A_G L_{p^\bot} = 0$ is thus equivalent to $\mmap{P} A_G \mmap{\compl{P}} = 0$, which by \cref{lem:disjsandwich} is equivalent to $E(G)$ and $P \times \compl{P}$ being disjoint.
  \end{proof}
\end{theorem}

In \cite{spitzer_quantum_2026}, Courtney, Ganesan, and Wasilewksi's definition was extended from connectedness to decompositions into connected components.

\begin{definition}[cf. Definition 2.14 in \cite{spitzer_quantum_2026}]\label{def:cgwdecomp}
  Let $G$ be a quantum graph. We say that $G$ admits a \emph{CGW decomposition into $k$ connected components} if there exist non-trivial projectors $p_1, \dots, p_k$ with $\sum_i p_i = \identity$ such that $L_{p_s} A_G L_{p_t} = 0$ for all $s \neq t \in [k]$.
\end{definition}

Note that for $k = 2$ the definition reduces to \cref{def:cgwconnected} of CGW connectedness, since in this case $p_2 = \identity - p_1 = p_1^\bot$. To show that CGW decompositions coincide with our decompositions, we need the following characterisation of partitions.

\begin{lemma}\label{lem:partchar}
  Let $X$ be a quantum set. A collection of projective subsets $P_1, \dots, P_k \psubseteq X$ forms a partition of $X$ if and only if $\sum_i \pproj{P_i} = \identity$.
  \begin{proof}
    We have $\sum_i \pproj{P_i} = \identity$ if and only if $\img \pproj{P_i} \bot \img \pproj{P_j}$ and $\lspan \bigcup_i \img \pproj{P_i} = X = \img \identity$, which by \cref{def:ops} and \cref{def:disjointness} are precisely the conditions $P_i$ and $P_j$ being disjoint and $\bigcup_i P_i = X$.
  \end{proof} 
\end{lemma}

The equivalence of CGW decompositions and our decompositions is now immediate.

\begin{theorem}\label{thm:conncompequiv}
  Let $G$ be a quantum graph. Then the following are equivalent.
  \begin{enumerate}
    \item $G$ admits a decomposition into $k$ connected components.
    \item $G$ admits a CGW decomposition into $k$ connected components.
  \end{enumerate}
  \begin{proof}
    The equivalence of $L_{p_s} A_G L_{p_t} = 0$ and the disjointness of $P_s \times P_t$ and $E(G)$ follows again directly from \cref{lem:disjsandwich}. On the other hand, by \cref{lem:partchar}, $p_1, \dots, p_k \in V(G)$ are non-trivial projections summing to the identity if and only if the projective subsets $P_1, \dots, P_k \psubseteq V(G)$ with $\pproj{P_i} = p_i$ for all $i \in [k]$ form a partition of $V(G)$. 
  \end{proof}
\end{theorem}

\subsection{Colouring}
Let us next turn our attention to colouring. A $k$-colouring of a classical graph $G$ is an assignment of one of $k$ colours to each vertex, such that adjacent vertices get different colours. We can again phrase this definition purely in terms of subsets. The idea is to consider the subsets $P_s$ of vertices having colour $s \in [k]$. These sets partition $V(G)$. The adjacency condition is then equivalent to $P_s \times P_s$ and $E(G)$ being disjoint. This is because $P_s \times P_s$ consists of all possible edges between vertices in $P_s$, and we require that none of them exist in $E(G)$. This leads to the following translation to the quantum case.

\begin{definition}\label{def:colouring}
  A quantum graph $G$ has a \emph{$k$-colouring} if there exists a partition of $V(G)$ into projective subsets $P_1, \dots, P_k$ such that $E(G)$ and $P_s \times P_s$ are disjoint for all $s \in [k]$. The smallest $k$ such that $G$ admits a $k$-colouring is called the \emph{chromatic number} and is denoted by $\chi(G)$.
\end{definition}

We may invoke \cref{lem:disjsandwich} to derive the following equivalent characterisation of colouring.

\begin{proposition}
  A quantum graph $G$ with adjacency operator $A_G$ has a $k$-colouring if and only if there exists a partition of $V(G)$ into projective subsets $P_1, \dots, P_k$ such that $\mmap{P_s} A_G \mmap{P_s} = 0$ for all $s \in [k]$.
\end{proposition}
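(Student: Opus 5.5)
This is a direct specialisation of \cref{lem:disjsandwich}, applied once for each colour class. By \cref{def:colouring}, $G$ has a $k$-colouring if and only if there is a partition $P_1, \dots, P_k$ of $V(G)$ such that, for every $s \in [k]$, $E(G)$ and $P_s \times P_s$ are disjoint. The equivalent formulation in the proposition uses the same partition, so only the per-colour condition needs to be compared.

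First, I fix $s \in [k]$ and apply \cref{lem:disjsandwich} with $P = Q = P_s$. Both are projective subsets of $V(G)$, so the lemma's hypotheses hold. It gives that $E(G)$ and $P_s \times P_s$ are disjoint if and only if $\mmap{P_s} A_G \mmap{P_s} = 0$. Since this equivalence holds for each $s$ separately, the conjunction over all $s \in [k]$ of the disjointness conditions is equivalent to the conjunction of the vanishing conditions.

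Second, I assemble the two directions. If a $k$-colouring exists, the same partition witnesses the operator condition. Conversely, a partition satisfying $\mmap{P_s} A_G \mmap{P_s} = 0$ for all $s$ is, by the previous step, a $k$-colouring in the sense of \cref{def:colouring}.

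There is no real obstacle here, because all the diagrammatic work was already done in \cref{lem:disjsandwich}. The only point to check is that the lemma was stated for arbitrary pairs $P, Q \psubseteq V(G)$, not only for complementary ones. That means the diagonal case $P = Q$ is covered, and the partition structure plays no role in the per-colour equivalence.
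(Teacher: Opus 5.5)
Your proposal is correct and matches the paper, which obtains this proposition by invoking \cref{lem:disjsandwich} with $P = Q = P_s$ for each colour class $s \in [k]$. You are also right that the lemma's generality for arbitrary pairs $P, Q \psubseteq V(G)$ covers the diagonal case $P = Q$.
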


This highlights a nice duality between colouring and decompositions into connected components. While the adjacency operator of a disconnected graph can be written as a block diagonal matrix, the adjacency operator of a $k$-colourable graph can be written as a ``block off-diagonal'' matrix, where the diagonal blocks are zero.

Just as for connected components, there is already an established notion of $k$-colourings which was introduced by Brannan, Ganesan, and Harris \cite{brannan2022quantum} in the context of non-local games. Classically, one may phrase the existence of a $k$-colouring in terms of the so called \emph{colouring game}. As usual in the setting of non-local games, two players, Alice and Bob, try to convince a referee that they know a $k$-colouring of the classical graph. The referee sends an arbitrary vertex of the classical graph to Alice and Bob respectively. The players then have to answer, without communicating, with one of $k$ colours. The players have a \emph{winning strategy} if they can guarantee that their answers are always compatible: If they receive the same vertex they have to answer with the same colour, and if they receive two adjacent vertices they have to answer with distinct colours. It is possible to show that Alice and Bob have a winning strategy if and only if the graph is $k$-colourable. 

\begin{remark}\label{rem:nonlocalstrats}
  Beyond these so called \emph{local} strategies, one is usually moreover interested in strategies where the players have access to some shared resource, like an entangled quantum state. This leads to relaxations like the quantum chromatic number. While these concepts can also be studied in the quantum setting, we are here only interested in ``classical'' properties of quantum graphs.
\end{remark}

In \cite{brannan2022quantum} the colouring game was generalised to quantum graphs. The main obstacle for this generalisation is that quantum graphs no longer have individual vertices. Instead the referee prepares a bipartite quantum state in a way that Alice and Bob respectively have access to one of the two systems. The players then perform a measurement on their respective systems to determine their (classical) answer. The classical compatibility conditions can be phrased in this quantum setting, where we require them to be satisfied with probability $1$. For the precise way of how the questions and conditions are constructed in the quantum case, we refer the reader to Definitions 5.2 and 5.4 in \cite{brannan2022quantum}. For our purposes it suffices to state that the existence of a winning strategy amounts to the following statement, which may be seen as a definition of colouring for quantum graphs.

\begin{definition}[cf. Definition 2.15 in \cite{ganesan_spectral_2023} and Theorem 4.7 in \cite{brannan2022quantum}]\label{def:bghcol}
  Let $G$ be a quantum graph with corresponding operator space $S$. Then $G$ has a \emph{BGH $k$-colouring} if there exist projections $p_1, \dots, p_k \in V(G)$ with $\sum_s p_s = \identity$ such that
  \begin{equation*}
    p_s S p_s = 0  
  \end{equation*}
  for all $s \in [k]$.
\end{definition}

The projectors here correspond precisely to the projective measurement that guarantees a winning probability of $1$. Using the phrasing of \cref{def:bghcol}, it is not hard to show that it is equivalent to our \cref{def:colouring} in terms of projective subsets.

\begin{proposition}\label{prop:colouringbghequiv}
  A quantum graph $G$ has a $k$-colouring if and only if it has a BGH $k$-colouring.
  \begin{proof}
    Recall that projections $p_1, \dots, p_k \in V(G)$ correspond precisely to projective subsets $P_1, \dots, P_k \psubseteq V(G)$ such that $\pproj{P_s} = p_s$. Vectorising the elements $x \in S \subseteq M_n$, we can rephrase the condition $p_s S p_s = 0$ as
    \begin{equation*}
      \input{diagrams/bgh-colouring-cond.tikz}\enspace.
    \end{equation*}
    The diagram merits some clarification. A priori, the input $x$ of $\pproj{E(G)}$ is allowed to range over all of $M_n$, not just over the space $T$ of right $V(G)'$-module homomorphisms. Such a choice would violate the topology of the bicategorical diagrams, as the shading would have to be terminated without the presence of a separating $1$-morphism. However, note that since $\img \pproj{E(G)} \subseteq T$, we have $\ker \pproj{E(G)} \supseteq T^\bot$. Since every $x \in M_n$ can be written as $x = x' + x''$ with $x' \in T$ and $x'' \in T^\bot \subseteq \ker \pproj{E(G)}$, we may restrict to $T$. For the same reason, we may omit the $x$ altogether, which yields
    \begin{equation*}
      \input{diagrams/bgh-colouring-cond-rewr-1.tikz} \qquad\IFF\qquad \input{diagrams/bgh-colouring-cond-rewr-2.tikz}.
    \end{equation*}
    The latter is precisely the condition that $E(G)$ and $P_s \times P_s$ are disjoint, as desired. By \cref{lem:partchar}, the sum condition corresponds precisely to $P_1, \dots, P_k$ forming a partition of $V(G)$, which completes the proof.
  \end{proof}
\end{proposition}

As mentioned above, \cref{def:bghcol} is just a rephrasing of the existence of a winning strategy in the colouring game, so we may explicitly state the following.

\begin{corollary}\label{cor:colouringgame}
  A quantum graph $G$ has a $k$-colouring if and only if there exists a winning strategy in the colouring game.
\end{corollary}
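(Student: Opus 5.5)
The corollary is a chain of two equivalences, so the plan is to compose them. By \cref{prop:colouringbghequiv}, $G$ has a $k$-colouring in the sense of \cref{def:colouring} exactly when it has a BGH $k$-colouring in the sense of \cref{def:bghcol}. It therefore remains to identify BGH $k$-colourings with winning (local, i.e.\ classical) strategies in the quantum colouring game of \cite{brannan2022quantum}. This identification is the content of \cite[Theorem 4.7]{brannan2022quantum}, restated as \cite[Definition 2.15]{ganesan_spectral_2023}.

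To make this step explicit, I would take a winning classical strategy: a projective measurement $\{p_s\}_{s\in[k]}$ on the vertex system (a PVM in $V(G)$) used by both players. I would then check two conditions from \cite[Definitions 5.2 and 5.4]{brannan2022quantum}.
\begin{itemize}
  \item The \emph{synchronicity condition}, which forces both players to use the same PVM. This is automatic for local deterministic strategies after a standard reduction.
  \item The \emph{adjacency condition}, which says that the probability of returning equal colours on edge states vanishes. This is equivalent to $p_s S p_s = 0$ for all $s$.
\end{itemize}
Conversely, given projections $p_1,\dots,p_k\in V(G)$ with $\sum_s p_s=\identity$ and $p_sSp_s=0$, letting both players measure $\{p_s\}$ gives a strategy that wins with probability $1$.

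The main obstacle is making sure the strategy class in \cite{brannan2022quantum} really is the classical (local) one and matches \cref{def:bghcol} exactly. In particular, the projections should live in $V(G)$ itself rather than in a commutant or an amplification $V(G)\otimes B(K)$, consistent with \cref{rem:nonlocalstrats}. I would handle this by restricting the cited theorem to the trivial shared resource $K=\C$, where the tensor factor disappears. With that in place, the corollary follows by chaining the two equivalences.
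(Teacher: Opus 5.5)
Your proposal is correct and follows the paper's argument: the corollary is \cref{prop:colouringbghequiv} chained with the identification, from \cite[Theorem 4.7]{brannan2022quantum}, of BGH $k$-colourings with winning classical strategies, and the paper takes this identification as the content of \cref{def:bghcol}. Your explicit sketch of that cited step is not needed, and one detail in it depends on convention: if both players' systems are identified with $\C^n$ in the standard way, one player must measure the conjugate projections $\{\overline{p_s}\}$ rather than $\{p_s\}$ (mirroring the conjugate in $\pproj{Y \times Z} = \overline{\pproj{Y}} \otimes \pproj{Z}$), because on the maximally entangled (diagonal) question the answer pair $(a,b)$ occurs with probability $\Tr(p_a p_b\transpose)/n$, which vanishes for all $a \neq b$ only if every $p_s$ is real.
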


We can also characterise the colouring definition in terms of homomorphisms. In fact, the colouring game of \cite{brannan2022quantum} was introduced as a special case of the \emph{(quantum-to-classical) graph homomorphism game}, which is a quantum generalisation of the classical graph homomorphism game. Classically, instead of answering with a colour, Alice and Bob have to answer with a vertex of a target graph $H$. If the players received adjacent vertices, they have to answer with vertices that are adjacent in $H$. This way, one recovers the colouring game by setting $H = K_k$. Analogously to the colouring game, there exists a local winning strategy for a classical graph $G$ in the classical $H$-homomorphism game if and only there exists a homomorphism from $G$ to $H$. The quantum-to-classical game extends this to quantum graphs $G$: There is a winning strategy in the quantum-to-classical $H$-homomorphism game for a quantum graph $G$ if and only if there exists a homomorphism from $G$ to $H$.

However, there is an important caveat. Namely, the homomorphisms that correspond to the winning strategies of the homomorphism game are distinct from those introduced in \cref{def:qghom}. On a conceptual level, one has to distinguish between \emph{pure} homomorphisms, due to Musto, Reutter, and Verdon \cite{musto_compositional_2018}, which are classical functions in the sense of \cref{def:cfunc}, and \emph{mixed} homomorphisms, due to Stahlke \cite{stahle2016quantum}, which are quantum channels, or more precisely completely positive counital maps between quantum sets. One can show that pure homomorphisms are special cases of the more general mixed homomorphisms, but not vice versa, see also \cite[Section V.B]{musto_compositional_2018}. The quantum-to-classical homomorphism game characterises precisely the mixed homomorphisms.

These results immediately imply that a quantum graph $G$ is $k$-colourable if and only if there exists a mixed homomorphism from $G$ to $K_k$.  It turns out that the same result also holds for the pure definition of homomorphisms given in \cref{def:qghom}. This seems to be known to some researchers in the field, but we were unable to find a reference. We prove it here using the diagrammatic calculus. 

\begin{proposition}
  A quantum graph $G$ has a $k$-colouring if and only if there exists a homomorphism $G \to K_k$.
  \begin{proof}
    The edge relation of $K_k$ is given by the following projector $\C^k \otimes \C^k \to \C^k \otimes \C^k$.
    \begin{equation*}
      \input{diagrams/edgerel-kn.tikz}
    \end{equation*}
    A homomorphism $\varphi\colon G \to K_k$ is thus a quantum function satisfying 
    \begin{equation*}
      \input{diagrams/kn-hom-cond.tikz}
    \end{equation*}
    This is satisfied if and only if
    \begin{equation*}
      \input{diagrams/kn-hom-cond-equiv.tikz}
    \end{equation*}
    and we can rephrase the condition in terms of $\varphi\adjoint \eqqcolon \psi$ by taking the adjoint of both sides.
    \begin{equation*}
      \input{diagrams/kn-hom-cond-equiv-adj.tikz}
    \end{equation*}
    Finally, we pass to the double-wire calculus in $\TFdHilb$, for which the homomorphism condition becomes 
    \begin{equation}\label{eq:qghomknequiv}
      \input{diagrams/kn-hom-cond-equiv-doublewire.tikz}
    \end{equation}

    Note that since $\varphi$ commutes with comultiplication and preserves the counit, $\psi\colon \C^k \to V(G)$ commutes with multiplication and preserves the unit. Moreover, $\psi$ is real, since the condition is invariant under taking adjoints. Since the standard basis elements are projections in $\C^k$, this implies that their images under $\psi$ are projections in $V(G)$. Moreover, since $\psi$ is unital they sum to the identity. Conversely, we can define a map satisfying the three abovementioned properties from any collection of projectors summing to the identity by setting them as the images of the standard basis elements. We claim that the projectors given by the images of the standard basis elements satisfy the colouring condition if and only if $\psi\adjoint = \varphi$ is a graph homomorphism $G \to K_k$. We have seen that $\varphi$ is a graph homomorphism if and only if \cref{eq:qghomknequiv} holds. Define 
    \begin{equation*}
      \input{diagrams/kn-hom-cond-pdef.tikz}
    \end{equation*}
    for all $s \in [k]$. Since $\psi(\ket{s})$ is a projection in $V(G) \subseteq B(H)$, $\pproj{P_s}\colon H \to H$ is a projector that is also a right-module homomorphism over $V(G)'$. Bending down the outer wires of \cref{eq:qghomknequiv} and plugging in the standard basis yields for all $s, t \in [k]$
    \begin{equation*}
      \input{diagrams/kn-hom-cond-equiv-final.tikz},
    \end{equation*}
    where the second equality follows from the realness of $s$ and $\psi$, and the fact that taking conjugates in the diagrammatic calculus amounts to mirroring across a vertical axis. The last equality then follows from the definition of $\pproj{P_s}$ and is precisely the condition that $E(G)$ and $P_s \times P_s$ be disjoint. Since the $\pproj{P_s}$ sum to the identity, we conclude by \cref{lem:partchar} that $P_1, \dots, P_k$ moreover form a partition of $V(G)$, recovering \cref{def:colouring}.
  \end{proof}
\end{proposition}

\subsection{Independent Sets and Cliques}
Finally, let us turn our attention to independent sets and cliques. In the spirit of the definitions we have stated so far, an independent set should be given by a projective subset $X \psubseteq V(G)$ such that the set $X \times X$ of all possible internal edges in $X$ is disjoint with $E(G)$. 

\begin{definition}\label{def:indepsetsloopless}
  An \emph{independent set} of size $k$ of a quantum graph $G$ is given by a projective subset $X \psubseteq V(G)$ with $\abs{X} = k$ such that $X \times X$ and $E(G)$ are disjoint. The size of the largest independent set of $G$ is called the \emph{independence number} and is denoted by $\alpha(G)$.
\end{definition}

Similarly, we can state a definition of cliques. While an independent set is given by a subset $X$ of vertices such that none of their internal edges are contained in the graph, a clique is a subset where all internal edges are contained in the graph. In order to correctly generalise the classical case, we need to take some care when considering loops. The complete graph $X \times X$ on $X$ contains self-loops, while $G$ does not. Instead of comparing $X \times X$ to $E(G)$, we have to compare it to $E(\loops{G})$, the quantum graph $G$ with all loops added.

\begin{definition}\label{def:cliquesloopless}
  A \emph{clique} of size $k$ of a quantum graph $G$ is given by a projective subset $X \psubseteq V(G)$ such that $X \times X \psubseteq E(G^\circ)$. The size of the largest clique of $G$ is called the \emph{clique number} and is denoted by $\omega(G)$.
\end{definition}

We first note that the definition of independent sets is quite close to that of colouring. In fact, it is easy to see that a $k$-colouring is precisely a partition of the vertex set into $k$ independent sets. We thus immediately recover a classically well-known bound on the chromatic number.\footnote{Note that the $\abs{V(G)}$ here refers to the (subset) cardinality of $V(G)$, not its dimension.}

\begin{proposition}\label{prop:chromnumineq}
  Let $G$ be a quantum graph. Then $\chi(G) \geq \abs{V(G)} / \alpha(G)$.
\end{proposition}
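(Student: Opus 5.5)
Fix $k = \chi(G)$ and a $k$-colouring given by a partition $P_1, \dots, P_k$ of $V(G)$, so that $E(G)$ and $P_s \times P_s$ are disjoint for every $s \in [k]$. If no colouring exists there is nothing to show, taking $\chi(G) = \infty$ by convention. The plan is to mirror the classical counting argument: each colour class is an independent set, so each has size at most $\alpha(G)$, and the sizes of the classes add up to the size of $V(G)$.

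\textbf{Step 1.} Each $P_s$ is an independent set in the sense of \cref{def:indepsetsloopless}. Its size is $\abs{P_s} = \rk \pproj{P_s}$, and by the colouring condition $P_s \times P_s$ and $E(G)$ are disjoint. This is exactly the defining condition, so $\abs{P_s} \leq \alpha(G)$ by the definition of the independence number. The case $\pproj{P_s} = 0$ is harmless, since it contributes size $0$.

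\textbf{Step 2.} By \cref{lem:partchar}, the partition property means $\sum_s \pproj{P_s} = \identity$, where these are viewed as projectors $H \to H$. Since the images are mutually orthogonal by \cref{def:disjointness}, ranks are additive:
\begin{equation*}
  \sum_{s=1}^k \rk \pproj{P_s} = \rk \identity = \abs{V(G)}.
\end{equation*}
Additivity holds because orthogonal projectors summing to the identity give an orthogonal direct sum decomposition of $H$, so taking the trace of both sides gives the equality.

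\textbf{Step 3.} Combining the two steps gives $\abs{V(G)} = \sum_s \abs{P_s} \leq k\,\alpha(G)$, and dividing yields $\chi(G) \geq \abs{V(G)}/\alpha(G)$. Note that $\alpha(G) \geq 1$ whenever $V(G)$ is nonzero. One way to see this is that some $P_s$ must be nonempty, so Step 1 already supplies an independent set of positive size.

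The only point requiring care is Step 2. There one must be consistent about which representation is used to measure cardinality. The projections $\pproj{P_s} \in V(G)$ and the module maps on $H$ are identified, as noted in \cref{ssec:anyothername}, and $\abs{V(G)}$ means $\rk \identity_H$ rather than the dimension of the algebra. Once this convention is fixed, the argument is a trace computation.
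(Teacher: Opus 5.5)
Your proof is correct and follows the paper's argument. The paper simply notes that a $k$-colouring is exactly a partition of $V(G)$ into $k$ independent sets, and you make the rank-additivity step explicit via \cref{lem:partchar} and the trace. One caveat: your remark that $\alpha(G) \geq 1$ whenever $V(G)$ is nonzero only holds once a colouring exists, which is how you justify it. For instance, the complete quantum graph on $M_n$ with $n \geq 2$ has $\alpha(G) = 0$ and no colouring at all, a case covered only by your $\chi(G) = \infty$ convention.
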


We also see that, just as in the classical case, every independent set of a quantum graph $G$ is a clique of $\compl{G}$ and vice versa.

\begin{proposition}
  Let $G$ be a quantum graph. A projective subset $X \psubseteq V(G)$ is an independent set of $G$ if and only if it is a clique of $\compl{G}$.
  \begin{proof}
    Per definition, $X$ is an independent set of $G$ if and only if $\img \overline{\pproj{X}} \otimes \pproj{X} \subseteq (\img \pproj{E(G)})^\bot$. It is a clique of $\compl{G}$ if and only if
    \begin{align*}
      \img \overline{\pproj{X}} \otimes \pproj{X} &\subseteq \img \pproj{E(\compl{G})} \cup \img \pproj{\Delta}\\
                                                  &= ((\img \pproj{E(G)})^\bot \cap (\img \pproj{\Delta})^\bot) \cup \img \pproj{\Delta}\\ 
                                                  &= (\img \pproj{E(G)})^\bot,
    \end{align*} 
    where the last equality holds because we have $\img \pproj{\Delta} \subseteq (\img \pproj{E(G)})^\bot$ since $G$ has no loops. 
  \end{proof}
\end{proposition}

The discussion of independent sets in the quantum graph literature is much more muddy than for colourings and connected components. At least five different, non-equivalent definitions of independent sets have been considered, cf. \cite[Proposition 2]{duan_zeroerror_2013}. These definitions are of a very operational flavor, as they correspond to properties of the quantum channels that have the quantum graph as their confusability graph. We recall here the two most fundamental definitions, which correspond respectively to the one-shot classical and quantum zero-error capacities of quantum channels.

\begin{definition}[Equation (3) in \cite{duan_zeroerror_2013}]\label{def:dswindepset}
  Let $G$ be a quantum graph with loops at every vertex and corresponding operator space $S \subseteq B(H)$. A \emph{DSW independent set} of size $k$ of $G$ is given by a collection of orthonormal vectors $\psi_1, \dots, \psi_k \in H$ such that $\braket{\psi_i | S | \psi_j} = 0$ for all $i \neq j$. The size of the largest such independent set of $G$ is denoted by $\alpha_c(G)$.
\end{definition}

The motivation for this definition is information theoretic. It is a basic result in quantum information theory that two pure quantum states represented by norm-$1$ vectors $\psi_i, \psi_j \in \C^n$ are perfectly distinguishable if and only if they are orthogonal. One can now show that given a quantum channel $\Phi \colon M_n \to M_m$ with confusability graph $S$, the states stay perfectly distinguishable after applying $\Phi$, that is $\Phi(\ket{\psi_i}\bra{\psi_i}) \bot \Phi(\ket{\psi_j}\bra{\psi_j})$, if and only if $\braket{\psi_i | S | \psi_j} = 0$. A DSW independent set thus corresponds to an encoding of $k$ classical messages that can be transmitted without error using $\Phi$. This is sometimes called the classical one-shot zero-error capacity of the channel. A definition due to Weaver provides the quantum analogue of this notion.

\begin{definition}[cf. Definition 5.2 in \cite{weaver2017quantum}, Definition 2.16 in \cite{spitzer_quantum_2026}]
  Let $G$ be a quantum graph potentially containing loops with corresponding operator space $S$. An \emph{independent set of size $k$ in the sense of Weaver} is given by a projector $p \in V(G)$ with $\rk p = k$ such that $pSp \subseteq p V(G)' p$. The size of the largest such independent set of $G$ is denoted by $\alpha_q(G)$.
\end{definition}

Recall that the projector onto $S$ is given by $\pproj{E(G)}$. If $G$ has loops at every vertex, as in the case of confusability graphs, we have $\Delta \psubseteq E(G)$ and thus $\img \pproj{\Delta} \subseteq S$. For $V(G) = M_n$, we have $\img \pproj{\Delta} = V(G)' = \C \identity$, so $S$ contains the identity and thus the independent set condition becomes $pSp = \C p$. This is precisely the Knill-Laflamme error correction condition, which captures the existence of an encoding- and a decoding map that allows to transmit an entire state subspace of dimension $\rk p$ without error. This is called the quantum one-shot zero-error capacity. It is known, and in fact not hard to show, that the quantum capacity is at most the classical capacity. This implies that for quantum graphs $G$ on $M_n$, we have $\alpha_q(G) \geq \alpha_c(G)$. For a more detailed explanation of the quantum information-theoretic background, we refer the reader to Section III in \cite{duan_zeroerror_2013}. 

Weaver proceeds to define cliques in a complementary way. If $S \supseteq V(G)' = \img \pproj{\Delta}$, then $pV(G)'p$ is the smallest space that can contain $pSp$. Consequently, to define cliques one may consider the largest space that can be contained in $pSp$, which is given by $p M_n p$.

\begin{definition}[cf. Definition 5.2 in \cite{weaver2017quantum}, Definition 2.17 in \cite{spitzer_quantum_2026}]\label{def:cliques}
  Let $G$ be a quantum graph potentially containing loops with corresponding operator space $S$. A \emph{clique of size $k$ in the sense of Weaver} is given by a projector $p \in V(G)$ with $\rk p = k$ such that $p(S + \img \pproj{\Delta})p = p M_n p$. The size of the largest such clique of $G$ is denoted by $\omega_q(G)$.
\end{definition}

\begin{remark}
Technically, Weaver's original definitions of independent sets and cliques in \cite{weaver2017quantum} apply only to graphs with loops at every vertex. We modified both definitions slightly to accomodate quantum graphs without loops, as was done in \cite{spitzer_quantum_2026}. The resulting definitions are equivalent when restricted to the former case.
\end{remark}

 Let us now compare our definitions of independent sets and cliques to those of Weaver, since his definitions have enjoyed the most attention among independent set and clique definitions \cite{weaver2017quantum,weaver2019quantum,Allen12122025,spitzer_quantum_2026}. We first show that our independent set definition is strictly stronger.

\begin{proposition}\label{prop:indepsetweaversep}
  Let $G$ be a quantum graph. Then every independent set of $G$ is also an independent set in the sense of Weaver. In particular, we have $\alpha(G) \leq \alpha_q(G)$. Moreover, there exists a quantum graph $G$ such that $\alpha(G) < \alpha_q(G)$
  \begin{proof}
    Note that using the same proof as in \cref{prop:colouringbghequiv} one can show that letting $p \coloneqq \pproj{X}$, \cref{def:indepsetsloopless} is equivalent to $pSp = 0$, and we have $0 \subseteq p V(G)' p$. This proves the first part.

    To show the second part, we construct a quantum graph $G$ such that $\alpha_q(G) \geq n - 1$ and $\alpha(G) \leq 1$. To that end, consider the operator space $S = \C x$, where $x = I - n \ket{n}\bra{n}$. We view $S$ as a quantum graph $G$ on $M_n$. Since $S$ contains only traceless matrices, we have that $S \bot \C \identity = \img \pproj{\Delta}$, so $G$ is loopless. Moreover, $x$ is self-adjoint, so $S$ is closed under adjoints and hence $G$ is undirected. Now consider the projector
    \begin{equation*}
      p = \identity - \ket{n}\bra{n}.
    \end{equation*}
    Clearly, we have $pxp = p$, so $p$ witnesses an independent set in the sense of Weaver of size $\rk p = n - 1$. On the other hand, there cannot be a projector $q$ of rank $\geq 2$ such that $qSq = 0$. Indeed, any vector $v \in \img q$ would have to satisfy 
    \begin{equation*}
      0 = \braket{v | x | v} = \norm{v}^2 - n \abs{v_n}^2,
    \end{equation*}
    or equivalently $\norm{v}^2 = n \abs{v_n}^2$. If $\rk q \geq 2$, then there exist linearly independent vectors $u, v \in \img q$, so $w \coloneqq v - v_n u / u_n \in \img q$ satisfies $w \neq 0$ and $w_n = 0$. But then $0 \neq \norm{w}^2 = n \abs{w_n}^2 = 0$, contradiction.
  \end{proof}
\end{proposition}

The same holds for our clique definition.

\begin{proposition}\label{prop:cliqueweaversep}
  Let $G$ be a quantum graph. Then every clique of $G$ is also a clique in the sense of Weaver. In particular, we have $\omega(G) \leq \omega_q(G)$. Moreover, there exists a quantum graph $G$ such that $\omega(G) < \omega_q(G)$.
  \begin{proof}
    Let $S$ be the operator space associated to the quantum graph $G$. Let $X \psubseteq V(G)$ and $p \coloneqq \pproj{X}$. Observe that $\img \pproj{X \times X} = \img \overline{p} \otimes p = p M_n p$. $X$ is thus a clique in the sense of \cref{def:cliquesloopless} if and only if
    \begin{equation}\label{eq:cliquedef}
      p M_n p \subseteq S + \img \pproj{\Delta}.
      \tag{P}
    \end{equation}
    On the other hand, a clique in the sense of Weaver is given by a $p$ such that $p (S + \img \pproj{\Delta}) p = p M_n p$.
    Since the left-hand side is trivially contained in the right-hand side, the definition is equivalent to 
    \begin{equation}\label{eq:cliquedefweaver}
      p M_n p \subseteq p (S + \img \pproj{\Delta}) p.
     \tag{W}
    \end{equation}
    Clearly \ref{eq:cliquedef} implies \ref{eq:cliquedefweaver}, which shows that $\omega(G) \leq \omega_q(G)$. The fact that there exists a quantum graph such that $\omega(G) < \omega_q(G)$ will follow from \cref{prop:xaimpr}.
  \end{proof}
\end{proposition}

Given their strong operational interpretation, Weaver's definitions are quite useful, so it prima facie seems disappointing to recover a different notion. However, we argue that our definition has certain advantages from a graph-theoretic perspective.

The first drawback of Weaver's definition compared to ours is that it loses the familiar property that independent sets in $G$ are cliques in $\compl{G}$. Indeed, consider the operator space $S_D$ of traceless diagonal matrices, viewed as a quantum graph $G_D$ on $M_n$. Weaver proved \cite[Proposition 2.2]{weaver2017quantum} that if $n \geq k^2 + k - 1$, then $\omega_q(G_D) \geq k$. On the other hand, the complement $\compl{G_D}$ is given by the operator space $S_D' = \{\ket{i}\bra{j} \mid i \neq j \in [n]\}$, and we can show that $\alpha_q(\compl{G_D}) \leq 1$ for all $n$. Indeed, suppose there was a projector $p \in M_n$ with $\rk p \geq 2$ such that $pS_D'p \subseteq \C p$. Then there exist two orthonormal vectors $u, v \in \img p$ such that for all $x \in S_D'$ we have
\begin{equation*}
  \braket{v | x | u} = \braket{v | p x p | u} = \lambda_x \braket{v | u} = 0.
\end{equation*}
This in turn is equivalent to $\ket{u}\bra{v} \bot S_D'$, so $\ket{u}\bra{v}$ must be diagonal. But this is impossible for non-zero orthogonal $u, v$. It follows that $\alpha_q(G_D) < \omega_q(\compl{G_D})$ for $n \geq 5$, so there must be cliques in $G_D$ that are not independent sets in $\compl{G}_D$.

Another drawback consists in the interaction of independent sets with other graph-theoretic definitions, in particular colouring. We have seen that a colouring is a covering of the graph with independent sets precisely in our sense. Of course one has to remark that it is straightforward to state an alternative definition of colouring which amounts to a covering of the graph with independent sets in Weaver's sense. Such a definition, however, would not enjoy a characterisation in terms of the colouring game proposed by Brannan, Ganesan, and Harris, cf. \cref{cor:colouringgame}.

As a final remark, we also mention that compared to Weaver's cliques, our definition moreover somewhat cleans up a minor result by Spitzer and Nechita. In \cite{spitzer_quantum_2026}, the authors introduce explicit families of quantum graphs on $M_n$ and study their graph theoretic properties. One of these families is given by the quantum graphs $X_{A, \ccdot}$, where $A$ is the adjacency matrix of a classical graph. Their associated operator spaces are precisely given by those associated to the underlying classical graphs (cf. \cref{eq:clgraphopspace}), we simply view them as bimodules over $M_n'$ instead of $(\C^n)'$, as described in the remark in \cref{ssec:conncomp}. It is shown that quantum graphs of this form behave very similarly to their associated classical graph: They have the same number of connected components, the same chromatic number, and the same independence number (in the sense of Weaver), cf. \cite[Table 1]{spitzer_quantum_2026}. Suprisingly, the correspondence fails for the clique number. It is shown that $\omega(A) - 1 \leq \omega_q(X_{A, \ccdot}) \leq n - 1$, with both bounds being tight. It is unclear how to intepret this deviation from the classical clique number. For our definitions we instead obtain $\omega(X_{A, \ccdot}) \in \{0, 1\}$ while preserving the equality $\alpha(X_{A, \ccdot}) = \alpha(A)$. The former may be interpreted as the essentially classical $X_{A, \ccdot}$ not being ``noncommutatively dense'' enough to admit large cliques, cf. also \cite[Section 4.2.5]{spitzer_quantum_2026}.

\begin{proposition}\label{prop:xaimpr}
  For all classical graphs $A$ it holds that
  \begin{equation*}
    \alpha(X_{A, \ccdot}) = \alpha(A) \qquad\text{and}\qquad \omega(X_{A, \ccdot}) = \begin{cases}
      1 & \text{if } A = K_n\\
      0 & \text{otherwise.}
    \end{cases}
  \end{equation*}
  \begin{proof}
    First note that since the operator spaces of $X_{A, \ccdot}$ and $A$ coincide, and the class of projective subsets of $M_n$ subsumes that of projective subsets of $\C^n$ we immediately find that $\alpha(X_{A, \ccdot}) \geq \alpha(A)$. Note that the same does not hold for the clique number $\omega$. This is because the definition of cliques includes the diagonal relation $\Delta$, which on $M_n$ is given by $\img \pproj{\Delta} = \C \identity$, while on $\C^n$ it is given by $\img \pproj{\Delta} = D_n$. At the same time, we have $\alpha(X_{A, \ccdot}) \leq \alpha_q(X_{A, \ccdot}) = \alpha(A)$ by \cref{prop:indepsetweaversep} and \cite[Proposition 4.28]{spitzer_quantum_2026}, which shows that $\alpha(X_{A, \ccdot}) = \alpha(A)$.

    For the clique number, note that the projector into the operator space associated to $X_{A, \ccdot}$ is given by
    \begin{equation*}
      \input{diagrams/xa-edgerel.tikz}\enspace.
    \end{equation*}
    It follows that a projective subset $X \psubseteq V(G)$ is a clique of $X_{A, \ccdot}$ if and only if 
    \begin{equation*}
      \input{diagrams/xa-clique-cond.tikz}\enspace
    \end{equation*}
    Plugging in standard basis vectors at the bottom gives the following equivalent condition.
    \begin{equation}\label{eq:xacliquecondrewr}
      \input{diagrams/xa-clique-cond-basis.tikz}
    \end{equation}
    For $i = j$, the $A$ term vanishes since $A$ has no loops. In that case, by isomorphism of diagrams, the following condition must be satisfied.
    \begin{equation}\label{eq:xacliqueconddiag}
      \input{diagrams/xa-clique-cond-diag.tikz}
    \end{equation}
    This shows that $\pproj{X}$ can have rank at most $1$. Conversely, a clique of size $1$ is achievable if and only if $A$ is the complete graph. Indeed, note that if $A$ is not the complete graph, then there exist $i \neq j$ such that $A_{ij} = A_{ji} = 0$. In that case, the only way to satisfy \cref{eq:xacliquecondrewr} is if either $\pproj{X}\ket{i} = 0$ or $\pproj{X}\ket{j} = 0$. But then \cref{eq:xacliqueconddiag} implies that $\pproj{X} = 0$. On the other hand, if $A$ is the complete graph then the condition for $i \neq j$ is vacuous, and \cref{eq:xacliqueconddiag} is satisfied if $\pproj{X}$ is the projection onto the all-ones vector.
  \end{proof}
\end{proposition}

We have seen that when generalising classical independent sets in the spirit of how we generalised connected components and colourings, we recover a strictly stronger notion than that of Weaver, but has certain desirable properties.
Nevertheless, there is also a way of interpreting Weaver's independent sets in terms of projective subsets. The idea is to grant loops of quantum graphs a more central role. So far, all our definitions only apply to quantum graphs without loops. This is not a meaningful restriction, since we may always remove them as shown in \cref{cor:loopaddremove}. For instance, the independent sets of a quantum graph $G$ containing loops are simply defined as the independent sets of its loopless counterpart $\looprm{G}$. The alternative approach is to accept that $G$ has loops and instead of checking whether $X \times X$ and $E(G)$ are disjoint---which classically is never the case if $G$ contains loops---to check whether $(X \times X) \setminus \Delta$ and $E(G)$ are disjoint.

\begin{definition}\label{def:weakindepset}
  Let $G$ be a quantum graph potentially containing loops. A \emph{weak independent set} is given by a projective subset $X \psubseteq V(G)$ such that $(X \times X) \setminus \Delta$ and $E(G)$ are disjoint.
\end{definition}

Observe that restricted to the classical case, this definition is completely equivalent to \cref{def:indepsetsloopless}. If a classical graph $G$ has loops, then it does not matter whether we remove the loops of $G$ and compare $X \times X$ to the resulting edge relation, or remove the loops of the complete reflexive graph $X \times X$ and compare it to the original edge relation of $G$. Quantumly, however, these two approaches are no longer equivalent, and it turns out that it is the latter approach that yields independent sets in the sense of Weaver. 

\begin{proposition}
  Let $G$ be a quantum graph potentially containing loops. Then $X$ is a weak independent set if and only if $\pproj{X}$ is an independent set in the sense of Weaver.
  \begin{proof}
    Written in terms of subspaces, the weak independent set condition reads
    \begin{equation}\label{eq:weakindepsetcond}
      \img \overline{\pproj{X}} \otimes \pproj{X} \cap (\img \pproj{\Delta})^\bot \;\bot\; S,
    \end{equation}
    where $S = \img \pproj{E(G)}$ is the operator space corresponding to $G$. For simplicity, denote $p = \pproj{X}$, $V = \img \overline{p} \otimes p$ and $D = \img \pproj{\Delta}$. Note that per definition we have $D = V(G)'$. Moreover, by vectorisation, the elements in $V$ are precisely of the form $pxp$ where $x \in M_n$.

    Now suppose that $v \in V \cap D^\bot$ and $s \in S$. Then we have $\lrangle{v, s} = \lrangle{p v p, s} = \lrangle{v, psp}$, so it follows that \cref{eq:weakindepsetcond} is equivalent to $V \cap D^\bot \;\bot\; pSp$. This in turn is equivalent to 
    \begin{equation*}
      pSp \subseteq (V \cap D^\bot)^\bot = V^\bot \cup D.
    \end{equation*}
    Since $pSp$ is obviously orthogonal to $V^\bot$, we must have $pSp \subseteq D$. At the same time, we have
    \begin{equation*}
      D = (p + p^\bot) D (p + p^\bot) = pDp + pDp^\bot + p^\bot D p + p^\bot D p^\bot.
    \end{equation*}
    Since $pSp$ is again obviously orthogonal to $p^\bot D p^\bot$, $p^\bot D p$, $p D p^\bot$, we must have
    \begin{equation*}
      pSp \subseteq pDp = pV(G)'p.
    \end{equation*}
    Since $p = \pproj{X}$, this is precisely the condition that $\pproj{X}$ is an independent set in the sense of Weaver.
  \end{proof}
\end{proposition}

One might hope that a similar result holds for cliques. In \cref{def:cliques}, we modify the loops of the quantum graph $G$ in order to obtain the correct inclusion conditions, just as we did in \cref{def:indepsetsloopless} of independent sets. The other option would be to modify the loops of the graph $X \times X$ instead, which would yield the condition
\begin{equation}\label{eq:altcliquecond}
  (X \times X) \setminus \Delta \psubseteq E(G).
\end{equation}
Unfortunately, it turns out that it is strictly stronger than Weaver's definition. Moreover, the condition is incomparable with our \cref{def:cliques} of cliques, so we cannot justify calling them ``weak cliques'' in our setting. We omit the proofs of these two statements for the sake of brevity.
On the positive side, the definition is nicely dual to \cref{def:weakindepset} of weak independent sets, and it is not hard to prove that a clique in this sense is a weak independent set of the complement graph and vice versa. The pairing of weak independent sets and cliques in the sense of \cref{eq:altcliquecond} might ultimately turn out to have more desirable properties than the pairing of \cref{def:indepsetsloopless} and \cref{def:cliques}. Both pairings behave nicely with respect to the graph complement, and while the latter is closely related to colourings, the former recovers Weaver's independent sets and hence inherits their operational interpretation. We leave this avenue of inquiry for future work.

\subsubsection{On operational interpretations}\label{sssec:indepopinterp}
Given the strong operational interpretations of DSW and Weaver independent sets, a natural question to ask is whether our definition of (strong) independent sets also admits such an interpretation. We do not fully answer this question, but we give some evidence towards a positive answer.   

The crucial observation is that there are multiple ways of associating a quantum graph to a quantum channel. The approach that is used to give an operational interpretation to DSW and Weaver independent sets considers confusability graphs. The other quantum graph we may associate to a quantum channel we call the \emph{mapping graph}. The idea stems from classical channels. Let $X$ be a set of code words. A classical channel $\mathcal{N}$ may be considered as taking a word $w \in X$ and returning any given word $w' \in X$ with (potentially zero) probability $p_{\mathcal N}(w' \mid w)$. The channel is then uniquely defined by the probability matrix $(p(x\mid y))_{x, y \in X}$. We may associate to such a channel a directed graph whose vertex set is $X$ and that contains a directed edge from $y$ to $x$ if and only if $p_{\mathcal N}(x \mid y) \neq 0$. Such a graph captures much more about the channel than the confusability graph. We can lift this concept to the quantum case by associating to a quantum channel $\Phi\colon M_n \to M_n$ with Kraus operators $F_1, \dots, F_k$ the operator space
\begin{equation*}
  S^\Phi = \lspan \{F_i \mid i \in [k] \},
\end{equation*}
which we call the \emph{quantum mapping graph} of $\Phi$.  

The idea to consider this graph is not new, cf. Section VII in \cite{duan_zeroerror_2013}, but it has received significantly less attention than its confusability counterpart. We show that we may use this formalism to give an interpretation of (strong) independent sets. The idea is based on work by Bei, Chen, Guan, Qiao, and Sun \cite{bei_independent_2021}. In their paper, they introduce characterisations of colourings and independent sets of classical graphs in terms of matrix spaces. This is obviously closely related to some of the definitions mentioned in the previous sections; the difference being that they mainly work over arbitrary fields instead of over the complex numbers. As a consequence, all their definitions use transposes instead of adjoints, which makes them unnatural in the quantum case. The authors seem to agree, and they address the complex case separately at the end of their paper. Concretely, they consider sets of Kraus operators of quantum channels and define the following.

\begin{definition}[Definition 13.1 in \cite{bei_independent_2021}]\label{def:isospace}
  Let $B = \{B_1, \dots, B_m\}$ be the set of Kraus operators of a quantum channel. An \emph{isotropic space} of $B$ is a subspace $U \subseteq \C^n$ such that for any $u, u' \in U$ and any $B_i \in B$, we have $\braket{u | B_i | u'} = 0$.
\end{definition}

They then associate to a classical graph $G$ the matrix set
\begin{equation*}
  B_G \coloneqq \left\{\frac{1}{\sqrt{\deg{i}}} \ket{i}\bra{j}, \frac{1}{\sqrt{\deg{j}}} \ket{j}\bra{i} \mid i \sim j \in [n] \right\},
\end{equation*}
and show that isotropic spaces of $B_G$ correspond to independent sets of $G$.

\begin{proposition}[Proposition 13.3 in \cite{bei_independent_2021}]\label{prop:isospaceclindepsetequiv}
  Let $G$ be a classical graph and let $B_G$ be as above. Then $G$ has an independent set of size $k$ if and only if $B_G$ has an isotropic space of dimension $k$.
\end{proposition}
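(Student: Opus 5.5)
The plan is to show that isotropy of a subspace $U \subseteq \C^n$ with respect to $B_G$ is a purely combinatorial condition on the \emph{support} of $U$. The degree normalisations $1/\sqrt{\deg i}$ are nonzero scalars, so they play no role in any equation of the form $\braket{u | B | u'} = 0$. Since $G$ is undirected, both $\ket{i}\bra{j}$ and $\ket{j}\bra{i}$ appear in $B_G$ for every edge. Hence $U$ is isotropic if and only if
\begin{equation*}
  \braket{u | i}\braket{j | u'} = \overline{u_i}\, u'_j = 0 \qquad \text{for all } u, u' \in U \text{ and all ordered pairs } (i, j) \text{ with } i \sim j.
\end{equation*}

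For the direction ``independent set $\implies$ isotropic space'', let $I \subseteq [n]$ be an independent set of size $k$. I take $U = \lspan\{\ket{i} \mid i \in I\}$, which has dimension $k$. For any $u, u' \in U$ and any edge $i \sim j$, at most one of $i, j$ lies in $I$. So either $u_i = 0$ or $u'_j = 0$, and the displayed condition holds.

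For the converse, let $U$ be an isotropic space of dimension $k$. Define its support $I \coloneqq \{ i \in [n] \mid u_i \neq 0 \text{ for some } u \in U\}$. Then $U \subseteq \lspan\{\ket{i} \mid i \in I\}$, so $\abs{I} \geq k$. I claim $I$ is independent. Suppose instead that $i, j \in I$ with $i \sim j$. Pick $u \in U$ with $u_i \neq 0$ and $u' \in U$ with $u'_j \neq 0$. Then $\overline{u_i}\, u'_j \neq 0$, which contradicts isotropy. The key point is that isotropy quantifies over \emph{pairs} $u, u'$, so $u$ and $u'$ may be chosen independently; no single vector has to witness both coordinates. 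Since independence is closed under taking subsets, any $k$-element subset of $I$ is an independent set of size $k$.

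I do not expect a genuine obstacle. The only step needing care is this support argument in the converse. A weaker notion, requiring only $\braket{u | B_i | u} = 0$ for each single vector $u$, would not immediately give the conclusion, because cancellations within one vector would be possible.
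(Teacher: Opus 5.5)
Your proof is correct. The converse works precisely because $u$ and $u'$ may be chosen independently, and this forces the joint support of $U$ to be an independent set.

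The paper does not prove this statement itself; it imports it from \cite{bei_independent_2021}. It does observe that $\lspan B_G = S_G$, and that $U$ is isotropic if and only if the projection $p$ onto $U$ satisfies $pS_Gp = 0$. So the statement amounts to $\alpha(X_{A,\ccdot}) = \alpha(A)$, and the paper establishes that equality in \cref{prop:xaimpr} by a different route:
\begin{itemize}
  \item The lower bound holds because projective subsets of $\C^n$ are projective subsets of $M_n$. This is your coordinate-subspace direction.
  \item The upper bound combines $\alpha \leq \alpha_q$ from \cref{prop:indepsetweaversep} with the external result $\alpha_q(X_{A,\ccdot}) = \alpha(A)$ of \cite{spitzer_quantum_2026}.
\end{itemize}

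Your support argument replaces that citation with an elementary, self-contained step. It also gives slightly more: every isotropic space lies inside $\lspan\{\ket{i} \mid i \in I\}$ for some independent set $I$. The paper's route, in turn, yields the stronger statement for Weaver independent sets, which your argument cannot reach. Weaver's condition $pSp \subseteq \C p$ only gives $\braket{u|s|u'} = \lambda_s \braket{u|u'}$, so you could no longer pick $u$ and $u'$ freely.

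One small correction to your closing remark. Over $\C$, polarisation shows that $\braket{u|B|u} = 0$ for all $u \in U$ already implies $\braket{u|B|u'} = 0$ for all $u, u' \in U$. So that variant is not actually weaker. This is also what makes the fidelity characterisation (Proposition 13.10 of \cite{bei_independent_2021}) work.
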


The two definitions are readily linearised. Taking the span of the set $B$, \cref{def:isospace} is equivalent to the existence of a projection $p \in M_n$ such that $p (\lspan B) p = 0$. We have seen that this is equivalent to the graph with operator space $\lspan B$ having an independent set of size $\rk p$. At the same time, $\lspan B$ is precisely the operator space corresponding to the mapping graph of the quantum channel with Kraus operators $B_1, \dots, B_m$. Similarly, the span of the set $B_G$ is just equal to $S_G$, the operator space associated to a classical graph $G$, see \cref{eq:clgraphopspace}. \cref{prop:isospaceclindepsetequiv} for example implies that $\alpha(X_{A, \ccdot}) = \alpha(A)$, as we have seen in \cref{prop:xaimpr}. Our intuition about a potential operational interpretation of (strong) independent sets is now informed by the following result.

\begin{proposition}[Proposition 13.10 in \cite{bei_independent_2021}]
  Let $B = \{B_1, \dots, B_m\}$ be the set of Kraus operators of a quantum channel $\Phi$. Then $U$ is an isotropic space of $B$ if and only if $F^{max}_U(\Phi) = 0$.
\end{proposition}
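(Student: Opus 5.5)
The plan is to reduce both sides of the equivalence to one support condition: \emph{the channel maps every state supported on $U$ to an operator orthogonal to $U$}. I will take $F^{max}_U(\Phi)$ to be the maximal fidelity between an input state $\rho$ supported on $U$ and its output, i.e.\ $F^{max}_U(\Phi)=\max_{\rho}F(\rho,\Phi(\rho))$ with $\operatorname{supp}\rho\subseteq U$. The definition is not reproduced above, so this is an assumption. If the intended definition instead maximises over independent inputs $\rho,\sigma$ supported on $U$, only the reverse implication changes, and I note below how to adapt it. Throughout I write $\Phi(\rho)=\sum_i B_i\rho B_i\adjoint$ and let $P_U$ be the orthogonal projector onto $U$.

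\emph{Forward direction.} Suppose $U$ is isotropic, i.e.\ $P_U B_i P_U=0$ for all $i$. I would carry out two steps.
\begin{enumerate}
  \item For any $\rho$ with $\rho=P_U\rho P_U$, compute $P_U\Phi(\rho)P_U=\sum_i (P_UB_iP_U)\rho(P_UB_iP_U)\adjoint=0$.
  \item Since $\Phi(\rho)\geq 0$, this forces $\operatorname{supp}\Phi(\rho)\subseteq U^\bot$, so $\Phi(\rho)$ is orthogonal to $\rho$. The fidelity of orthogonally supported states vanishes, because $\sqrt{\rho}\,\Phi(\rho)\sqrt{\rho}=0$.
\end{enumerate}
Hence $F^{max}_U(\Phi)=0$.

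\emph{Reverse direction.} Suppose $F^{max}_U(\Phi)=0$. I would proceed in three steps.
\begin{enumerate}
  \item Test only pure inputs $\rho=\ket{u}\bra{u}$ with $u\in U$ a unit vector. Then
  \begin{equation*}
    F(\ket{u}\bra{u},\Phi(\ket{u}\bra{u}))=\braket{u|\Phi(\ket{u}\bra{u})|u}=\sum_i\abs{\braket{u|B_i|u}}^2 .
  \end{equation*}
  Up to the squaring convention for fidelity, vanishing gives $\braket{u|B_i|u}=0$ for all $u\in U$ and all $i$.
  \item Polarise. The form $(u,u')\mapsto\braket{u|B_i|u'}$ is sesquilinear on $U$, and over $\C$ a sesquilinear form vanishing on the diagonal vanishes identically. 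Explicitly, the four terms $\braket{u+i^k u'|B_i|u+i^k u'}$ for $k=0,\dots,3$ recover $\braket{u|B_i|u'}$.
  \item Conclude that $\braket{u|B_i|u'}=0$ for all $u,u'\in U$, which is exactly \cref{def:isospace}.
\end{enumerate}
If the definition allows independent inputs $\rho,\sigma$, polarisation is unnecessary. Taking $\rho=\ket{u}\bra{u}$ and $\sigma=\ket{u'}\bra{u'}$ gives $\sum_i\abs{\braket{u|B_i|u'}}^2=0$ directly.

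Both directions are routine once the support statement is isolated. The main obstacle is pinning down the precise definition of $F^{max}_U$ from \cite{bei_independent_2021}: whether it uses a single input or a pair, and squared or unsquared fidelity. This matters only for the reverse direction. The complex polarisation in step 2 is the one genuinely nontrivial step, and it is also exactly why the statement is phrased over $\C$ with adjoints rather than transposes. Finally, the condition $P_UB_iP_U=0$ for all $i$ is equivalent to $P_U(\lspan B)P_U=0$. In the language of this paper, that says the projective subset $X\psubseteq M_n$ with $\pproj{X}=P_U$ is an independent set of the mapping graph with operator space $\lspan B$, by the argument of \cref{prop:colouringbghequiv}.
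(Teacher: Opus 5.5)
Your proposal is correct. The paper does not prove this statement itself (it is quoted from \cite{bei_independent_2021}), but right after it the paper fixes the definition $F^{max}_U(\Phi)=\max_{u\in U}\Tr\ket{u}\bra{u}\,\Phi(\ket{u}\bra{u})$ over pure inputs only, which settles your hedging. Under that definition the quantity is exactly $\max_u\sum_i\abs{\braket{u|B_i|u}}^2$, so your reverse direction (pure inputs plus complex polarisation) applies verbatim with no fidelity convention involved, and the forward direction only needs the observation that isotropy makes every $\braket{u|B_i|u}$ vanish; your mixed-state support argument is more than is needed.
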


Here $F^{max}_U(\Phi)$ is called the quantum gate maximum subspace-fidelity and is defined as $F^{max}_U(\Phi) = \max_{u \in U} \Tr \ket{u}\bra{u} \Phi(\ket{u}\bra{u})$. Intuitively, it measures how well $\Phi$ implements the identity channel on the most suitable quantum state in the subspace $U$. If the maximum subspace-fidelity is $0$, then every state in the subspace gets maximally perturbed. 

There are some obstacles towards fully formalising this idea. The main problem is that the definition of strong independent sets only applies to loopless quantum graphs. On the level of mapping graphs, this corresponds precisely to quantum channels whose Kraus operators are traceless. Such quantum channels are rare. One may show that these are the quantum channels that have zero entanglement fidelity with respect to the maximally mixed state, cf. \cite[Equation 43]{schumacher1996sending}. To define independent sets for quantum graphs with loops, we usually ignore the loops by considering the intersection of the operator space with the orthogonal complement of the diagonal relation. In the case of $M_n$, this amounts to passing to the subspace of traceless matrices. Unfortunately, this procedure is not physical: Consider a pure channel of a single unitary Kraus operator $U$ with non-zero trace. The operator system spanned by $U$ corresponds to a quantum graph with loops and removing the loops leaves the trivial subspace. The trivial subspace does not have a basis of Kraus operators and is consequently not the mapping graph of a quantum channel. This raises the question whether there is a physically meaningful way to associate a loopless mapping graph to an arbitrary quantum channel.

A second curiosity is that the proposed operational interpretation does not quite match the intuition behind the idea of the mapping graph. If the mapping graph roughly encodes which inputs may be mapped to which outputs, then an independent set intuitively should not only consist of states that get maximally perturbed from themselves, but also from every other state in the independent set, satisfying 
\begin{equation*}
  \max_{u,v \in U} \Tr \ket{v}\bra{v} \Phi(\ket{u}\bra{u}) = 0
\end{equation*}
instead. This raises the question to what extent a potential operational interpretation of independent sets can be reconciled with the classical intuition of the mapping graph. We leave these questions open for future work. 

\begin{question}
  Does the definition of (strong) independent sets in terms of projective subsets admit an operational interpretation in terms of quantum channels? 
\end{question}

Of course the same question can be asked for the other graph properties we have considered. Connected components of the mapping graph, for instance, should intuitively correspond to invariant subspaces of the quantum channel. To make this formal, one has to consider connected components of \emph{directed} quantum graphs, while we have only considered the undirected case. This is somewhat involved, since even classical directed graphs admit two equally valid definitions of connectedness: \emph{Strong} connectedness, where every vertex needs to be reachable from any other along directed edges, and \emph{weak} connectedness, which corresponds to undirected connectedness of the symmetric closure of the graph. In \cite{courtney_connectivity_2025}, the authors define strong connectedness, while invariant subspaces should intuitively correspond to weak connectedness.

\begin{question}
  Do the definitions of connected components and colourings admit operational interpretations in terms of quantum channels? 
\end{question}

\subsection{Vertex Covers}
To illustrate the usefulness of the projective subset formalism one final time, we use it to generalise the notion of \emph{vertex covers} to the quantum case. Classically, a vertex cover of a graph $G$ is a set $X \subseteq V(G)$ such that for every edge $(u, v) \in E(G)$ either $u \in X$ or $v \in X$. This definition admits the following more succinct phrasing, which allows us to immediately generalise it to the quantum case.

\begin{definition}
  A \emph{vertex cover} of a quantum graph $G$ is a projective subset $X \psubseteq V(G)$ such that $E(G) \psubseteq (X \times V(G)) \cup (V(G) \times X) \cup (X \times X)$.
\end{definition}

The reader may want to verify that one indeed recovers the notion of vertex covers if $G$ is a classical graph. Now it is easy to show that for classical graphs, the complement of a vertex cover is an independent set and vice versa. It turns out that this is still true in the quantum case. 

\begin{proposition}
  Let $G$ be a quantum graph and $X \psubseteq V(G)$. Then $X$ is a vertex cover if and only if $\compl{X}$ is an independent set.
  \begin{proof}
    We first show that the projection corresponding to $P \coloneqq (X \times V(G)) \cup (V(G) \times X) \cup (X \times X)$ is given by $\pproj{P'} \coloneqq \identity \otimes \pproj{X} + \overline{\pproj{X}} \otimes \identity - \overline{\pproj{X}} \otimes \pproj{X}$: A routine calculation shows that $\pproj{P'}$ is indeed a projection. Further, if $V(G) = \bigoplus_i M_{n_i}$, then $\pproj{X}$ is an endomorphism of $\bigoplus_i \C^{n_i} \cong \C^N$, so we may assume without loss of generality that $\pproj{X}\colon \C^N \to \C^N$ for some suitable $N \in \N$.

    By definition, we have
    \begin{align*}
      \img \pproj{P} = \lspan \big(&\{\overline{v} \otimes \psi \mid v \in \img \pproj{X}, \psi \in \C^N\}\\
        \cup~ &\{\psi \otimes v \mid v \in \img \pproj{X}, \psi \in \C^N\}\\
      \cup~ &\{\overline{v} \otimes v \mid v \in \img \pproj{X}\}\big).
    \end{align*}
    It is easy to see that the image of $\pproj{P'}$ is contained in this space. Conversely, let $v$ be an arbitrary element in $\img \pproj{P}$, that is
    \begin{equation*}
      v = \sum_i \alpha_i \overline{v_i^1} \otimes w_i^1 + \sum_j \beta_j w_j^2 \otimes v_i^2 + \sum_k \gamma_k \overline{v_k^3} \otimes v_k^4,
    \end{equation*}
    where the annotated $v$ vectors are in $\img \pproj{X}$ and the annotated $w$ are arbitrary in $\C^N$. It suffices to show that $\pproj{P'}v = v$, and by linearity it suffices to show that this holds for the individual summands. We have
    \begin{equation*}
    \pproj{P'}\left(\overline{v_i^1} \otimes w_i^1\right) = \overline{v_i^1} \otimes \pproj{X} w_i^1 + \overline{v_i^1} \otimes w_i^1 - \overline{v_i^1} \otimes \pproj{X} w_i^1 = \overline{v_i^1} \otimes w_i^1
    \end{equation*}
     and
    \begin{equation*}
    \pproj{P'}\left(\overline{v_k^3} \otimes v_k^4\right) = \overline{v_k^3} \otimes v_k^4 + \overline{v_k^3} \otimes v_k^4 - \overline{v_k^3} \otimes v_k^4 = \overline{v_k^3} \otimes v_k^4,
    \end{equation*}
    the remaining case follows by symmetry. It follows that $\pproj{P'} = \pproj{P}$ as desired. The statement now follows from the following chain of equivalences.
    \begin{equation*}
      E(G) \psubseteq P
    \end{equation*}
    \begin{equation*}
      \pproj{E(G)} \circ \pproj{P} = \pproj{E(G)}
    \end{equation*}
    \begin{equation*}
      \pproj{E(G)} \circ (\identity \otimes \identity - \overline{\pproj{X}} \otimes \identity - \identity \otimes \pproj{X} + \overline{\pproj{X}} \otimes \pproj{X}) = 0\\
    \end{equation*}
    \begin{equation*}
      \pproj{E(G)} \circ ((I - \overline{\pproj{X}}) \otimes (I - \pproj{X})) = 0
    \end{equation*}
    \begin{equation*}
      \pproj{E(G)} \circ (\overline{\pproj{\compl{X}}} \otimes \pproj{\compl{X}}) = 0
    \end{equation*}
    The final statement is equivalent to $E(G)$ and $\compl{X} \times \compl{X}$ being disjoint, which is the definition of $\compl{X}$ being an independent set of $G$.
  \end{proof}
\end{proposition}

\section{Conclusion}\label{sec:conclusion}
In this paper, we have developed the theory of projective subsets, which seek to refine the usual notion of quantum subsets. We found that they behave in many ways like classical subsets: They admit the usual set-theoretic operations, satisfy corresponding laws (with the important exception of distributivity), and interact well with classical functions between quantum sets. We have then phrased the most important definitions in quantum graph theory using projective subsets. The approach to this turned out to be 
delightfully
straightforward: It suffices to phrase the classical definitions in terms of vertex sets and replace the word ``subset'' by ``projective subset''. We found that this way, despite their different origins, the definitions of connectivity, connected components, colourings and independent sets can all be unified in the same framework. We moreover extended this collection by a definition of vertex covers. 

At the same time, we found that the quantum generalisations arising from this procedure do depend on the initial choice of classical definition. The salient example of this is the definition of independent sets. Two classically equivalent definitions---differing only in their handling of loops---give rise to two different notions of independent sets of quantum graphs. One choice results in a notion that behaves similarly to the classical case, satisfying familiar duality results with respect to vertex covers and graph complements, and interacting well with other quantum graph parameters. The other choice recovers Weaver's operationally motivated definition, which is closely related to zero-error capacities of quantum channels. It is particularly surprising that a purely graph-theoretically motivated definition recovers a fundamental property of quantum channels.

Just as for independent sets, we also obtained two distinct notions of cliques depending on how their definition is phrased in terms of projective subsets. Contrary to independent sets, however, neither definition recovers Weaver's cliques, both being strictly stronger. Nevertheless, we found that the four definitions of cliques and independent sets phrased in terms of projective subsets come in pairs---each type of clique respectively coincides with one type of independent set in the complement graph.

Mathematically, we have only considered the finite-dimensional case. This is because finite quantum sets are sufficient for almost all applications, and the graphical calculus employed is only valid for pivotal categories like $\FdHilb$. Its infinite-dimensional counterpart $\Hilb$ fails to be pivotal, since infinite-dimensional Hilbert spaces are generally not naturally isomorphic to their double duals. However, we expect almost all of our observations to straightforwardly generalise to the infinite-dimensional case. In particular, the characterisation given in \cref{def:projsubsetweaver} is an adaptation of Weaver's binary quantum relations for infinite quantum sets and immediately generalises to the infinite-dimensional case by replacing the \Cstar-algebra by a von Neumann algebra.

A natural next step is to study the newly introduced definitions of independent sets, cliques, and vertex covers. An obvious open question is to what extent some of the nice results that have been shown of Weaver's independent sets and cliques hold for the newly introduced pairs of cliques and independent sets. In particular, it has been shown \cite{weaver2017quantum} that Weaver's independent sets and cliques satisfy an analogue of Ramsey's theorem. Classically, the theorem states that for every $k \in \N$, there exists an $N \in \N$ such that every graph of size at least $N$ either has an independent set or a clique of size $k$. It would be interesting to verify that the same holds for the other choices of definitions, and compare the resulting bounds on $N$. 

It would also be interesting look at quantum properties. In this work, we have only considered classical graph properties as they would be introduced in a first course on graph theory. In recent years, there has been an increasing interest in \emph{quantum} properties of classical graphs. As mentioned in \cref{rem:nonlocalstrats}, these properties are usually defined by first characterising existing definitions in terms of non-local games, and then relaxing the definition by allowing quantum strategies. Two of the most prominent definitions are quantum colouring \cite{cameron2007quantum} and quantum isomorphism \cite{atserias2019quantum}. Of course, it would be desirable to extend these definitions to quantum graphs. This has been done for quantum colourings in \cite{brannan2022quantum}. The story for quantum isomorphism is more complicated. Besides non-local games, it can be phrased in terms of quantum automorphism groups, a perspective which is sometimes more convenient. Both these definitions have been extended to quantum graphs \cite{brannan_quantum_2024} \cite{brannan2020bigalois,gromada_examples_2022}. Unfortunately, while the two perspectives are equivalent for classical graphs, this equivalence breaks for quantum graphs. A third approach was taken in \cite{musto_compositional_2018}. The observation is that one can quantise the classical functions of \cref{def:cfunc} by adding a Hilbert space wire through the boxes, turning the linear maps $X \to Y$ representing classical functions between quantums sets $X, Y$ into linear maps $\mathcal{H} \otimes X \to Y \otimes \mathcal{H}$. The resulting diagrammatic conditions then define a \emph{quantum function} \cite[Definition III.11]{musto_compositional_2018}. They proceed by defining quantum bijections as special cases of quantum functions, and finally quantum isomorphisms as quantum bijections that intertwine the adjacency operators \cite[Definition V.11]{musto_compositional_2018}.

This graphical approach is quite flexible. For example, it allows us to easily define quantum projective subsets as maps $\mmap{X}$ satisfying 
\begin{equation*}
  \input{diagrams/quantum-pset-mmap.tikz} \qquad\quad\qquad \input{diagrams/quantum-pset-mmap-idemp.tikz} \qquad\quad\qquad \input{diagrams/quantum-pset-mmap-adj.tikz}
\end{equation*}
It would be interesting to see whether one can recover the quantum colouring definition of \cite{brannan2022quantum} this way. Of course one would have to figure out how to define the notions of complements, intersections, and unions, since we did not define them in terms of diagrams. If this can be achieved, we would also obtain definitions of quantum independent sets, quantum cliques, quantum vertex covers, and quantum connected components.

\bigskip

\noindent\textbf{Acknowledgements.} The author is grateful for inspiring discussions with Adina Goldberg during the program ``Operator Algebras and Quantum Information'' at Mittag-Leffler Institute. The author moreover acknowledges support from the CNRS 80Prime grant ``QuantGraphe''.

\printbibliography
\stoptocwriting
  
\end{document}